\numberwithin{equation}{section}
\theoremstyle{plain}
\newtheorem{theorem}{Theorem}
\newaliascnt{definition}{theorem}
\newtheorem{definition}[definition]{Definition}
\newaliascnt{lemma}{theorem}
\newtheorem{lemma}[lemma]{Lemma}
\crefname{lemma}{lemma}{lemmas}
\Crefname{Lemma}{Lemma}{Lemmas}
\newaliascnt{remark}{theorem}
\newtheorem{example}{Example}
\newcommand{\A}{U}
\newcommand{\allgr}{\bar{\mathcal{G}}}
\newcommand{\gen}[1]{J_{#1}}
\newcommandx{\bk}[2][1=]{
\ifthenelse{\equal{#1}{}}
{\kernel{R}_{#2}}
{\kernel{R}_{#2} \langle #1 \rangle}
}
\newcommand{\CSF}{{CSF}}
\newcommand{\cl}{\cliqueletter}
\newcommand{\cliqueletter}{Q}
\newcommand{\clset}[1]{
\ifthenelse{\equal{#1}{}}
{\mathcal{\cliqueletter}}
{\mathcal{\cliqueletter}(#1)}
}
\newcommand{\cm}[1]{|#1|}
\newcommandx{\comb}[2][1=]{
\ifthenelse{\equal{#1}{}}
{S_{#2}}
{S_{#1 \mid #2}}
}
\newcommand{\combkernel}[1]{\kernel{\Sigma}_{#1}}
\newcommand{\combkernelpath}[1]{\bar{\kernel{\Sigma}}_{#1}}
\newcommand{\combmeas}[1]{\sigma_{#1}}
\newcommand{\combpart}[2]{\varsigma_{#1}^{#2}}
\newcommand{\combsp}[1]{\spc{S}_{#1}}
\newcommand{\cond}{\,|\,}
\newcommandx{\ct}[1][1=]{\ifthenelse{\equal{#1}{}}{\mathsf{I}}{\mathsf{I}_{#1}}}
\newcommand{\Dir}{\mathrm{Dir}}
\newcommand{\disc}{\mathsf{Pr}}
\newcommand{\dens}{f}
\newcommandx{\df}[1][1=]{\ifthenelse{\equal{#1}{}}{\delta}{\delta_{#1}}}
\newcommand{\E}{\mathbb{E}}
\newcommand{\epart}[2]{\xi_{#1}^{#2}}
\newcommand{\eqdef}{\vcentcolon=}
\newcommand{\exponent}{\mathbf{1}}
\newcommand{\G}[1]{\kernel{G}_{#1}}
\newcommandx{\graph}[1][1=]{\ifthenelse{\equal{#1}{}}{G}{G_{#1}}}
\newcommandx{\graphedgeset}[1][1=]{\ifthenelse{\equal{#1}{}}{E}{E_{#1}}}
\newcommandx{\graphfd}[1][1=]{\ifthenelse{\equal{#1}{}}{\mathcal{G}}{\mathcal{G}_{#1}}}
\newcommand{\graphmap}{\gamma}
\newcommandx{\graphnodeset}[1][1=]{\ifthenelse{\equal{#1}{}}{V}{V_{#1}}}
\newcommandx{\graphsp}[1][1=]{\ifthenelse{\equal{#1}{}}{\mathcal{G}}{\mathcal{G}_{#1}}}
\newcommandx{\graphtarg}[1][1=]{\ifthenelse{\equal{#1}{}}{\graphtargletter^\star}{\graphtargletter^\star \langle #1 \rangle}}
\newcommand{\graphtargletter}{\eta}
\newcommand{\grtr}{\mathcal{T}}
\newcommand{\hypcond}{;}
\newcommand{\hyperparamletter}{\vartheta}
\newcommandx{\hyperparam}[2][1=]{\ifthenelse{\equal{#1}{}}{\hyperparamletter_{#2}}{\hyperparamletter'_{#2}(#1)}}
\newcommandx{\I}[1][1=]{\ifthenelse{\equal{#1}{}}{I}{I^{(#1)}}}
\newcommand{\ind}[2]{I_{#1}^{#2}}
\newcommand{\intvect}[2]{\llbracket #1, #2 \rrbracket}
\newcommandx{\jtnode}[1][1=]{\ifthenelse{\equal{#1}{}}{\cliqueletter}{\cliqueletter_{#1}}}
\newcommand{\kernel}[1]{\mathbf{#1}}
\newcommand{\Lp}[1]{\mathsf{L}_{#1}}
\newcommand{\maintarg}{\eta^\ast}
\newcommand{\maintargMCMC}[1]{\eta^{\ast \N,#1}}
\newcommand{\meas}[1]{\mathbb{M}(#1)}
\newcommand{\mf}[1]{\mathbb{F}(#1)}
\newcommand{\N}{N}
\newcommand{\Nmcmc}{M}
\newcommandx{\nckernel}[2][1=]{
\ifthenelse{\equal{#1}{}}
{\boldsymbol{\Gamma}_{#2}}
{\boldsymbol{\Gamma}_{#2} \langle #1 \rangle}
}
\newcommand{\nset}{\mathbb{N}}
\newcommand{\nsetpos}{\mathbb{N}}
\newcommand{\1}{\mathbbm{1}}
\newcommand{\p}{p}
\newcommandx{\param}[1][1=]{\ifthenelse{\equal{#1}{}}{\theta}{\theta_{#1}}}
\newcommandx{\paramd}[1][1=]{\ifthenelse{\equal{#1}{}}{\theta}{\theta(#1)}}
\newcommand{\paramsp}{\Theta}
\newcommand{\paramfd}{\mathcal{P}}
\newcommandx{\partarg}[2][1=]{
    \ifthenelse{\equal{#1}{}}
    {\eta^{\N}_{#2}}
    {\eta_{#2}^{\ast, \N}}
}
\newcommandx{\partinit}[1][1=]{
       \ifthenelse{\equal{#1}{}}
       {\kappa}
       {\kappa^\ast \langle #1 \rangle}
}
\newcommandx{\perm}[2][2=]{
\ifthenelse{\equal{#2}{}}
{s_{#1}}
{s_{#1}(#2)}
}
\newcommand{\PG}[1]{\kernel{P}_{#1}^\N}
\newcommand{\potset}{\boldsymbol{\wp}}
\newcommand{\prior}{\pi}
\newcommand{\priorfun}{\varpi}
\newcommand{\precmat}{\theta}
\newcommand{\precmatset}[1]{\mathsf{M}_{#1}^+}
\newcommand{\prob}{\mathbb{P}}
\newcommand{\probmeas}[1]{\mathbb{M}_1(#1)}
\newcommandx{\prop}[2][1=]{
\ifthenelse{\equal{#1}{}}
{\kernel{\propletter}_{#2}}
{\kernel{\propletter}_{#2}^\ast \langle #1 \rangle}
}
\newcommand{\propletter}{K}
\newcommandx{\proppath}[2][1=]{
\ifthenelse{\equal{#1}{}}
{\bar{\kernel{\propletter}}_{#2}}
{\bar{\kernel{\propletter}}_{#2} \langle #1 \rangle}
}
\newcommand{\randind}[1]{\iota}
\newcommand{\refm}{\nu}
\newcommand{\retrosupp}[1]{\mathsf{S}_{#1}}
\newcommand{\rsetnonneg}{\mathbb{R}_+}
\newcommand{\rsetpos}{\mathbb{R}_+^\ast}
\newcommand{\rmd}{d}
\newcommandx{\SMCdist}[2][1=]{
\ifthenelse{\equal{#1}{}}
{\varrho_{#2}^\N}
{\varrho_{#2}^\N \langle #1 \rangle}
}
\newcommandx{\scalmat}[1][1=]{\ifthenelse{\equal{#1}{}}{v}{v_{#1}}}
\newcommand{\sepset}[1]{
\ifthenelse{\equal{#1}{}}
{\mathcal{S}}
{\mathcal{S}(#1)}
}
\newcommand{\spc}[1]{\mathsf{#1}}
\newcommand{\subnodeset}{U}
\newcommand{\supp}{\operatorname{Supp}}
\newcommandx{\targ}[3][1=, 3=]{
   \ifthenelse{\equal{#3}{}}
   {
       \ifthenelse{\equal{#1}{}}
       {\eta_{#2}}
       {\eta^\ast \langle #2 \rangle}
   }
   {
       \ifthenelse{\equal{#1}{}}
       {\eta_{#2} \langle #3 \rangle}
       {\eta_{#2}^\ast \langle #3 \rangle}
   }
}
\newcommandx{\targMCMC}[3][1=]{
\ifthenelse{\equal{#1}{}}
{\eta_{#2}^{\N, #3}}
{\eta^* \langle #2 \rangle^{\N, #3}}
}
\newcommand{\targpath}[1]{\bar{\eta}_{#1}}
\newcommand{\trans}{\intercal}
\newcommandx{\tree}[1][1=]{\ifthenelse{\equal{#1}{}}{T}{T_{#1}}}
\newcommand{\treepart}[2]{\tau_{#1}^{#2}}
\newcommandx{\treerv}[1][1=]{\ifthenelse{\equal{#1}{}}{\mathcal T}{\mathcal T_{#1}}}
\newcommandx{\tr}[2][2=]{
\ifthenelse{\equal{#2}{}}
{\tilde{x}_{#1}}
{\tilde{x}_{#1}^{#2}}
}
\newcommand{\trexfd}[1]{\mathcal{X}_{#1}}
\newcommandx{\trex}[2][2=]{
\ifthenelse{\equal{#2}{}}
{x_{#1}}
{x_{#1}^{#2}}
}
\newcommandx{\trfd}[1][1=]{\ifthenelse{\equal{#1}{}}{\mathcal{T}}{\mathcal{T}_{#1}}}
\newcommand{\trgr}{g}
\newcommandx{\trsp}[1][1=]{\ifthenelse{\equal{#1}{}}{\mathcal{T}}{\mathcal{T}_{#1}}}
\newcommandx{\trtarg}[1][1=]{\ifthenelse{\equal{#1}{}}{\eta^\ast}{\eta^\ast \langle #1 \rangle}}
\newcommandx{\uk}[2][1=]{
\ifthenelse{\equal{#1}{}}
{\kernel{Q}_{#2}}
{\kernel{Q}_{#2} \langle #1 \rangle}
}
\newcommandx{\ungraphtarg}[1][1=]{\ifthenelse{\equal{#1}{}}{\gamma^\star}{\gamma^\star \langle #1 \rangle}}
\newcommandx{\untarg}[3][1=, 3=]{
   \ifthenelse{\equal{#3}{}}
   {
       \ifthenelse{\equal{#1}{}}
       {\gamma_{#2}}
       {\gamma^\ast \langle #2 \rangle}
   }
   {
       \ifthenelse{\equal{#1}{}}
       {\gamma_{#2} \langle #3 \rangle}
       {\gamma_{#2}^\ast \langle #3 \rangle}
   }
}
\newcommandx{\untrtarg}[1][1=]{\ifthenelse{\equal{#1}{}}{\gamma^\ast}{\gamma^\ast \langle #1 \rangle}}
\newcommand{\untargpath}[1]{\bar{\gamma}_{#1}}
\newcommandx{\unpartarg}[2][1=]{
    \ifthenelse{\equal{#1}{}}
    {\gamma_{#2}^\N}
    {\gamma_{#2}^\N  \langle #1 \rangle}
}
\newcommand{\wgt}[2]{\omega_{#1}^{#2}}
\newcommandx{\wgtfunc}[2][1=]{
\ifthenelse{\equal{#1}{}}
{w_{#2}}
{w_{#2} \langle #1 \rangle}
}
\newcommand{\wgtsum}[1]{\Omega_{#1}^\N}
\newcommandx{\x}[2][2=]{
\ifthenelse{\equal{#2}{}}
{x_{#1}}
{x_{#1}^{#2}}
}
\newcommandx{\X}[2][2=]{
\ifthenelse{\equal{#2}{}}
{X_{#1}}
{X_{#1}^{#2}}
}
\newcommandx{\xt}[2][2=]{
\ifthenelse{\equal{#2}{}}
{\tilde{x}_{#1}}
{\tilde{x}_{#1}^{#2}}
}
\newcommandx{\Xt}[2][2=]{
\ifthenelse{\equal{#2}{}}
{\tilde{X}_{#1}}
{\tilde{X}_{#1}^{#2}}
}
\newcommand{\Xset}{\mathsf{X}}
\newcommand{\Xsigma}{\mathcal{X}}
\newcommand{\Yset}{\mathsf{Y}}
\newcommand{\Ysigma}{\mathcal{Y}}
\newcommand{\Zset}{\mathsf{Z}}
\newcommand{\Zsigma}{\mathcal{Z}}
\newcommand{\xfd}[1]{\mathcal{X}_{#1}}
\newcommand{\xsp}[1]{\mathcal{X}_{#1}}
\newcommandx{\y}[1][1=]{\ifthenelse{\equal{#1}{}}{y}{y_{#1}}}
\newcommandx{\Y}[1][1=]{\ifthenelse{\equal{#1}{}}{Y}{Y_{#1}}}
\newcommandx{\ysp}[1][1=]{\ifthenelse{\equal{#1}{}}{\mathsf{Y}}{\mathsf{Y}_{#1}}}
\newcommandx{\yfd}[1][1=]{\ifthenelse{\equal{#1}{}}{\mathcal{Y}}{\mathcal{Y}_{#1}}}
\newcommandx{\z}[2][2=]{
\ifthenelse{\equal{#2}{}}
{z_{#1}}
{z_{#1}^{#2}}
}
\newcommandx{\Z}[2][2=]{
\ifthenelse{\equal{#2}{}}
{Z_{#1}}
{Z_{#1}^{#2}}
}
\newcommand{\zpart}[2]{\tau_{#1}^{#2}}
\newcommand{\zsp}[1]{\mathsf{Z}_{#1}}
\begin{document}

\begin{frontmatter}
\title{Bayesian learning of weakly structural Markov graph laws using sequential Monte Carlo methods}
\runtitle{Bayesian learning of WSM graph laws using SMC methods}

\begin{aug}
\author{\fnms{Jimmy} \snm{Olsson}\thanksref{t3}\ead[label=e1]{jimmyol@math.kth.se}},
\author{\fnms{Tatjana} \snm{Pavlenko}\thanksref{t2}\ead[label=e2]{pavlenko@math.kth.se}}
\and
\author{\fnms{Felix L.} \snm{Rios}\ead[label=e3]{flrios@math.kth.se}}

\address{Department of Mathematics\\
The Royal Institute of Technology, Stockholm, Sweden\\
\printead{e1,e2,e3}}

\thankstext{t2}{TP gratefully acknowledges support by the Swedish Research Council, Grant C0595201. }
\thankstext{t3}{JO gratefully acknowledges support by the Swedish Research Council, Grant 2018-05230.}
\runauthor{J. Olsson et al.}

\affiliation{Some University and Another University}

\end{aug}

\begin{abstract}
We present a sequential sampling methodology for weakly structural Markov laws, arising naturally in a Bayesian structure learning context for decomposable graphical models.
As a key component of our suggested approach, we show that the problem of graph estimation, which in general lacks natural sequential interpretation, can be recast into a sequential setting by proposing a recursive Feynman-Kac model that generates a flow of junction tree distributions over a space of increasing dimensions.
We focus on particle McMC methods to provide samples on this space, in particular on particle Gibbs (PG), as it allows for generating McMC chains with global moves on an underlying space of decomposable graphs.
To further improve the PG mixing properties, we incorporate a systematic refreshment step implemented through direct sampling from a backward kernel.
The theoretical properties of the algorithm are investigated, showing that the proposed refreshment step improves the performance in terms of asymptotic variance of the estimated distribution.
The suggested sampling methodology is illustrated through a collection of numerical examples demonstrating high accuracy in Bayesian graph structure learning in both discrete and continuous graphical models.

\end{abstract}

\begin{keyword}[class=MSC]
\kwd[Primary ]{62L20}
\kwd{62L20}
\kwd[; secondary ]{62-09}
\end{keyword}

\begin{keyword}
\kwd{Structure learning}
\kwd{sequential sampling}
\kwd{decomposable graphical models}
\kwd{particle Gibbs}
\end{keyword}
\tableofcontents
\end{frontmatter}

\section{Introduction}\label{sec:intro}
Understanding the underlying dependence structure of a multivariate distribution is becoming increasingly important in modern applications when analysing complex data.
These dependencies are conveniently represented by a \emph{graphical model} (GM) in which the set of nodes represents feature variables in the model and the set of edges encodes the dependence structure.
A specific family of undirected graphical models extensively studied in the literature are those which are Markov with respect to decomposable graphs, usually referred to as \emph{decomposable graphical models} (DGMs), to which we restrict our attention in the present paper. 
For these models, joint densities factorise into products of densities over certain subsets of nodes described by \emph{cliques} and \emph{separators}. This makes such models attractive from a computation point of view, since key statistical quantities - such as likelihood ratios and prior distributions - can be calculated or specified locally and graphs can be build up sequentially; see e.g. \citet{lauritzen1996}.

Recently, the family of \emph{weakly structural Markov} (WSM) probabilistic laws for decomposable graphs was introduced by \citet{green2017structural}, providing an analogous clique-separator factorisation for the graph law as for the data distribution.
In this paper, we focus on a fully Bayesian and computational approach for inferring posterior graph laws given observed data, a process usually called \emph{structure learning}.
Specifically, we consider \emph{strong hyper-} and weakly structural Markov prior laws for the model parameters and graphs respectively, so that the resulting graph posterior also factorises over the set of cliques and separators; see e.g. \citet{dawid1993}. 

The common strategy of Bayesian structure learning i based on the class of Markov chain Monte Carlo (McMC) methods such as e.g. the Metropolis-Hastings sampling scheme.
These methods generate, by performing local perturbations on the edge set, Markov chains by either operating directly on the space of decomposable graph or their corresponding junction trees; see for example \citet{Giudici01121999, 10.2307/2673658,2005,Green01032013}.
Further pertinent approaches include e.g. \cite{stingo2015efficient} who focus on Gaussian DGMs and propose edge moves by dynamically updating the perfect sequence of the cliques in the graph. 
A completely different strategy is presented in \citet{elmasri2017decomposable, elmasri2017sub} where a node-driven McMC sampler operates on tree-dependent bipartite graphs.  

The main issue for the above-mentioned samplers as well as other McMC strategies based on local moves is the limited mobility of their corresponding Markov chains, since at each step, only a small part of the edge set is altered. 
To tackle this issue, we present a procedure for recasting the problem of structure learning in WSM laws, which in general lacks natural sequential interpretation, into a sequential setting by an auxiliary construction that we refer to as a \emph{temporal embedding}, relying partly on the methodology of \emph{sequential Monte Carlo (SMC) samplers}; see \cite{10.2307/3879283}.
Specifically, we propose a recursive Feynman-Kac model which generates a flow of junction tree distributions over a space of increasing dimensions and develop an efficient SMC sampler on this space.
The SMC algorithm is then incorporated as an inner loop of a particle Gibbs (PG) sampler \citep{andrieu2010particle}, providing global moves on the underlying graph space.
In order to reduce the variance and improve the mobility of the standard PG sampler, we further introduce a step of systematic refreshment by means of backwards sampling.

Our suggested temporal embedding of WSM laws is constructed by a four step \emph{temoralisation} procedure which can be summarised as follows.
The procedure is initiated by defining a family of laws on decomposable graph spaces defined on all subsets of the node set.
In the context of Bayesian structure learning, these laws will correspond to graph posteriors defined over the corresponding subsets of random variables.
The second step of the temporalisation is to extend each graph law to the space of junction tree representations. 
Following \citet{Green01032013}, this is carried through by rescaling of the underlying graph probabilities by the number of equivalent junction tree representations. 
In this construction, the marginal law of an underlying graph will be preserved from the first step. 
Now, in the context of sequential Bayesian structure learning the user may, by always processing the nodes in some given order, run the risk of overlooking dependence relations running counter to this specific order. 
It is hence desirable to allow the node processing order to be randomised. 
For this purpose, the third step of the temporalisation procedure augments the junction tree distributions to mixtures of junction tree distributions over different subsets of underlying graph nodes.
Finally, in the last step of the temporalisation process, we introduce a sequence of Markov kernels allowing the distributions formed in the third step to be embedded into a recursive Feynman-Kac-distribution flow. The distributions of the resulting Feynman-Kac flow, with "time parameter" given by the number of nodes of the underlying graph, can be sampled efficiently using sequential Monte Carlo methods.

A central part in the construction of any SMC algorithm is the design of a proposal distribution, which should both dominate the target of interest and preferably be computationally efficient.
In our case the junction tree representation introduced in the second step of the temporalisation is of key importance, since it enables us to fulfill these requirements through the so-called Christmas tree algorithm (CTA), presented in the companion paper \citet{cta}.
The CTA by construction defines a Markov kernel, with closed-form transition probabilities, that dominates the temporalised version of the graph law.
Up to our knowledge, the last property seems much harder to obtain by, e.g., operating directly on a path space of decomposable graphs; see e.g. \citet{randchord}.

The rest of the paper is structured as follows.
In Section~\ref{sec:prel} we introduce some notation and present standard theoretical results for decomposable graphs and the junction tree representation.
Section~\ref{sec:non-temp:FK:flows} presents the four stage temporalisation strategy procedure.
The SMC sampler is designed in Section~\ref{sec:particle:methods:temp:models} along with the standard PG and its systematic refreshment extension.
In Section~\ref{sec:application} we present two motivating examples showing how the WSM laws arise in a Bayesian inference context.
In Section~\ref{sec:numerics} we investigate numerically the performance of the suggested PG sampler for three examples of Bayesian structure learning in DGMs. 
Appendix~\ref{sec:appendix} contains some graph theoretical notations, proofs and a lemma.

\section{Preliminaries}\label{sec:prel}

\subsection*{Notational convention}

We will always assume that all random variables are well defined on a
common probability space $(\Omega, \mathcal{F}, \mathbb{P})$. We denote by
$\nsetpos$ the positive natural numbers and for any $(m, n) \in \nset^2$
we use $\intvect{m}{n}$ to denote the unordered set $\{m, \ldots, n\}$. By $\rsetnonneg$ and $\rsetpos$
we denote the non-negative and positive real numbers respectively. 

\subsubsection*{Measurable spaces}
Given some measurable space $(\Xset, \Xsigma)$, we denote by $\meas{\Xsigma}$
and $\probmeas{\Xsigma}$ the sets of measures and probability measures on
$(\Xset, \Xsigma)$, respectively. In the case where $\Xset$ is a finite set,
$\Xsigma$ is always assumed to be the power set $\potset(\Xset)$ of $\Xset$,
and we simply write $\meas{\Xset}$ and $\probmeas{\Xset}$ instead of
$\meas{\potset(\Xset)}$ and $\probmeas{\potset(\Xset)}$, respectively.
In the finite case, counting measures will be denoted by $\cm{\rmd x}$. We let $\mf{\Xsigma}$ be the set of measurable functions on $(\Xset, \Xsigma)$.

\subsubsection*{Kernel notation}
Let $\mu$ be a measure on some measurable space $(\Xset, \Xsigma)$.
Then for any $\mu$-integrable function $h$, we use the standard notation
$$
\mu h \eqdef \int h(x) \, \mu(\rmd x)
$$
to denote the Lebesgue integral of $h$ w.r.t. $\mu$.

In addition, let $(\Yset, \Ysigma)$ be some other measurable space and $\mathbf{K}$
some possibly unnormalised transition kernel $\mathbf{K} : \mathsf{X} \times \Ysigma
\rightarrow \rsetnonneg$. The kernel $\mathbf{K}$ induces two integral operators, one acting
on functions and the other on measures. More specifically, given a measure $\nu$ on
$(\Xset, \Xsigma)$ and a measurable function $h$ on $(\Yset, \Ysigma)$, we define
the measure
$$
	\nu \mathbf{K} : \Ysigma \ni A \mapsto \int \mathbf{K}(x, A) \, \nu(\rmd x)
$$
and the function
$$
	\mathbf{K} h : \Xset \ni x \mapsto \int h(y) \, \mathbf{K}(x, \rmd y)
$$
whenever these quantities are well-defined.

Finally, given a third measurable space $(\Zset, \Zsigma)$ and a second kernel
$\mathbf{L} : \Yset \times \Zsigma \rightarrow \mathbb{R}_+$ we define, with $\mathbf{K}$
as above, the \emph{product kernel}
$$
\mathbf{K} \mathbf{L} : \Xset \times \Zsigma \ni (x, B) \mapsto \int  \mathbf{L}(y, B) \, \mathbf{K}(x, \rmd y),
$$
whenever this is well-defined.

\subsubsection*{Decomposable graphs and junction trees}
The notion of decomposable graphs and junction trees are introduced below.
For general graph theoretical concepts and notations the reader is referred to~\ref{sec:graphtheory}.
A graph $\graph$ is called decomposable if and only if its cliques can be be arranged in a so-called \emph{junction tree}, i.e. a tree whose nodes are the cliques in $\graph$, and where for any pair of cliques $\cl$ and $\cl'$ in $\graph$, the intersection $\cl\cap \cl'$ is contained in each of the cliques on the unique path $\cl \sim \cl'$.
Decomposable graphs are sometimes alternatively termed \emph{chordal}
or \emph{triangulated}, as an equivalent requirement is that every cycle of length $4$ or more is
chorded, see e.g~\citet{diestel2005graph}.
Each edge $(\cl, \cl')$ in a junction tree is associated
with the intersection $S = \cl \cap \cl'$, which is referred to as a \emph{separator}.
Since all junction tree representations of a specific decomposable graph
$G$ have the same separators, it makes sense to speak about ``the separators
of a decomposable graph''. We denote by $\sepset{G}$ the \emph{multiset}
of separators formed by a graph $\graph$, where each separator has a
multiplicity. The set of equivalent junction tree representations of a
decomposable graph $\graph$ is denoted by $\grtr(\graph)$, and
$\mu(\graph) \eqdef | \grtr(\graph) |$ denotes the number of such
representations. The unique graph underlying a specific junction tree
$\tree$ is denoted by $\trgr(\tree)$.

\section{Temporal embedding of weakly structural Markov laws}\label{sec:non-temp:FK:flows}

From now on, let $\graphnodeset$ be a fixed set of $p \in \nsetpos$
distinct nodes. Without loss of generality, we let $\graphnodeset = \intvect{1}{p}$.
For $\subnodeset \subseteq \graphnodeset$, we denote by $\graphsp[\subnodeset]$
the space of decomposable graphs with nodes $\subnodeset$, i.e.,
$\graphsp[\subnodeset] \eqdef \{(\subnodeset, E) : E \subseteq
\subnodeset \times \subnodeset \}$. In particular, set $\graphsp
\eqdef \graphsp[V]$. In addition, let $\allgr \eqdef \cup_{\subnodeset \subseteq \graphnodeset}
\graphsp[\subnodeset]$ be the space of all decomposable graphs with nodes
given by $\graphnodeset$ or some subset of the same.

\begin{definition}\label{def:CFS}
A positive function $\graphmap$ on $\allgr$ is said to satisfy the
\emph{clique-separator factorisation} (\CSF) if for all $G \in \allgr$,
$$
	\graphmap(G) = \frac{\prod_{\cl \in \clset{G}} \graphmap(\cl)}
	{\prod_{S \in \sepset{G}} \graphmap(S)}.
$$
\end{definition}

For some given function $\graphmap$ satisfying the \CSF, the aim
of this paper is to develop a strategy for sampling from the family of so-called \emph{weakly structural Markov} laws on $\probmeas{\graphsp}$ \citep{green2017structural}, which assuming full support on \(\graphsp\), are characterised as
\begin{equation} \label{eq:graph:target}
	\graphtarg(\rmd \graph) = \frac{\ungraphtarg(\rmd \graph)}
	{\ungraphtarg \1_{\graphsp}},
\end{equation}
where
$$
	\ungraphtarg(\rmd \graph) \eqdef
	\graphmap |_{\graphsp} (\graph) \, \cm{\rmd \graph},
$$
with $\graphmap |_{\graphsp}$ denoting the restriction of $\graphmap$
to $\graphsp$ and $\cm{\rmd \graph}$ the counting measure on $\graphsp$.
The normalising constant $\ungraphtarg \1_{\graphsp} =
\sum_{\graph \in \graphsp} \ungraphtarg(\graph)$ will be considered
as intractable, as computing the same requires the summation of over
the whole space $\graphsp$, which is impractical as the cardinality of
$\graphsp$ is immense already for moderate $p$.

Our goal is now to develop an efficient strategy for sampling from distributions of form~\eqref{eq:graph:target}.
As mentioned in the introduction, particle McMC methods are appealing as
these allow McMC chains with ``global'' moves to be defined also on large spaces.
However, unlike our setting, SMC methods sample from \emph{sequences}
of distributions, and a key ingredient of our developments is hence to provide
an auxiliary, sequential reformulation of the sampling problem under consideration.
This construction, which we will refer to as \emph{temporalisation}, comprise four
steps described in the following.
\subsubsection*{Step I}
Using the function $\graphmap$ inducing the target~\eqref{eq:graph:target} of interest, define, for each
$\A \subseteq \graphnodeset$, the measure
$$
	\graphtarg[\A](\rmd \graph)
	= \frac{\ungraphtarg[\A](\rmd \graph)}
	{\ungraphtarg[\A] \1_{\graphsp[\A]}}
$$
in $\probmeas{\graphsp[\A]}$, where
$$
	\ungraphtarg[\A](\rmd \graph)
	\eqdef \graphmap |_{\graphsp[\A]} (\graph)
	\, \cm{\rmd \graph},
$$
with $\graphmap |_{\graphsp[\A]}$ denoting the restriction of
$\graphmap$ to $\graphsp[\A]$ and $\cm{\rmd \graph}$ the
counting measure on $\graphsp[\A]$. Note that
$\graphtarg[\graphnodeset]$ coincides with $\graphtarg$,
the target of interest. As usual, we will let the same symbols $\ungraphtarg[\A]$
and $\graphtarg[\A]$ denote the probability functions of these measures.

\subsubsection*{Step II}
Extend each distribution $\graphtarg[\A]$ to
a distribution $\trtarg[\A]$ on $\trsp[\A] \eqdef
\cup_{\graph \in \graphsp[\A]} \grtr(\graph)$, the space of junction
tree representations of graphs in $\graphsp[\A]$. 
Following \citet{Green01032013}, one way of carrying through this extension is to define, for each $\A \subseteq \graphnodeset$, the measure
\begin{equation} \label{def:extended:tree:meas}
	\trtarg[\A](\rmd \tree)
	\eqdef \frac{\untrtarg[\A](\rmd \tree)}
	{\untrtarg[\A] \1_{\trsp[\A]}}
\end{equation}
in $\probmeas{\trfd[\A]}$, where
$$
\untrtarg[\A](\rmd \tree) \eqdef
\frac{\ungraphtarg[\A] \circ \trgr(\tree)}
{\mu \circ \trgr(\tree)} \, \cm{\rmd \tree},
$$
with $\cm{\rmd \tree}$ denoting the counting measure
on $\trfd[\A]$. In particular, we set $\untrtarg = \untrtarg[\graphnodeset]$
and $\trtarg = \trtarg[\graphnodeset]$.

\subsubsection*{Step III}
Let, for all $m \in \intvect{1}{p}$,
$\combsp{m}$ be the space of all $m$-combinations of elements in
$\intvect{1}{p}$. 
An element $\comb{m} \in \combsp{m}$ is of form $\comb{m}
= (\comb[1]{m}, \ldots, \comb[m]{m})$ where
$\{\comb[\ell]{m} \}_{\ell = 1}^m \subseteq \intvect{1}{p}$ are distinct.
In particular, $\combsp{p} = \{(1,\ldots, \p) \}$.
For $(\ell, \ell') \in \intvect{1}{m}^2$ such that $\ell \leq \ell'$,
we denote $\comb[\ell:\ell']{m} \eqdef (\comb[\ell]{m}, \ldots, \comb[\ell']{m})$.
In addition, we define, for all $m \in \intvect{1}{p}$,
the extended state spaces
$$
	\xsp{m} \eqdef \bigcup_{\comb{m} \in \combsp{m}}
	\left( \{\comb{m} \} \times \trsp[\comb{m}] \right),
$$
and, for some given discrete probability distribution
$\combmeas{m}$ on $\combsp{m}$, extended target distributions
$$
	\targ{m}(\rmd \x{m})
	=  \frac{\untarg{m}(\rmd \x{m})}{\untarg{m} \1_{\xsp{m}}},
$$
in $\probmeas{\xfd{m}}$, where
$$
	\untarg{m}(\rmd \x{m})
	= \untarg{m}(\rmd \comb{m}, \rmd \tree[m])
	\eqdef \untarg[marg]{\comb{m}}(\rmd \tree[m])
	\, \combmeas{m}(\rmd \comb{m}).
$$
Here we have chosen to write $\tree[m]$ instead of
$\tree[\comb{m}]$ in order to avoid double subscript notation.
The measures $\{\combmeas{m} \}_{m = 1}^p$
are supposed to satisfy the recursion
$$
	\combmeas{m + 1} = \combmeas{m} \combkernelpath{m},
$$
where
\begin{equation} \label{eq:def:comb:kernel:path}
	\combkernelpath{m}(\comb{m}, \rmd \comb{m + 1})
	\eqdef \delta_{\comb{m}}(\rmd \comb[1:m]{m + 1})
	\, \combkernel{m}(\comb[1:m]{m + 1}, \rmd \comb[m + 1]{m + 1}),
\end{equation}
with $\combkernel{m}$ being a Markov transition kernel from
$\combsp{m}$ to $\intvect{1}{p}$ such that $\combkernel{m}(\comb{m}, j) = 0$
for all $j \in \comb{m}$. 

\subsubsection*{Step IV}
Let $\{ \bk{m} \}_{m = 1}^{p - 1}$ be a sequence of Markov transition kernels acting in the
\emph{reversed} direction, i.e., for each $m$, $\bk{m} : \xsp{m + 1}
\times \potset(\xsp{m}) \rightarrow [0, 1]$, and define, following
\citet{10.2307/3879283}, for all $m \in \intvect{1}{p}$,
\begin{equation} \label{eq:def:extended:target}
    \untargpath{m}(\rmd \x{1:m}) \eqdef \untarg{m}(\rmd \x{m})
    \prod_{\ell = 1}^{m - 1} \bk{\ell}(\x{\ell + 1}, \rmd \x{\ell})
\end{equation}
and
\begin{equation} \label{eq:def:extended:untarget}
	\targpath{m}(\rmd \x{1:m})
	\eqdef \frac{\untargpath{m}(\rmd \x{1:m})}
	{\untargpath{m} \1_{\xsp{1:m}}}
	= \frac{\untargpath{m}(\rmd \x{1:m})}
	{\untarg{m} \1_{\xsp{m}}},
\end{equation}
where $\xsp{1:m} \eqdef \prod_{\ell = 1}^m \xsp{\ell}$.\footnote{Here and
in the following, we put a bar on top of a measure, kernel, function, etc.,
in order to indicate that the quantity is defined on a path space.}

\bigskip
Trivially, $\targpath{m}$ allows $\targ{m}$ as a marginal distribution
with respect to the last component $\x{m}$, therefore we regard $\eqref{eq:def:extended:untarget}$ as a \emph{temporal embedding} of $\eqref{eq:graph:target}$. 
We conclude this section by some remarks and comments on the steps of the above described procedure.
We first note that, in step II, by Lemma~\ref{lemma:preservation:marginal} (see \ref{sec:proofslemmas}), 
for $\graph \in \graphsp[\A]$,
\begin{equation*} \label{eq:T:given:G}
	\prob_{\trtarg[\A]} \left( \tau = \tree \cond \trgr(\tau) = \graph \right)
	= \frac{\prob_{\trtarg[\A]} \left( \tau = \tree, \trgr(\tau) = \graph \right)}
	{\prob_{\trtarg[\A]} \left(\trgr(\tau) = \graph \right)}
	= \frac{\trtarg[\A](\tree)}{\graphtarg[\A](\graph)}
	\1_{\{ \graph = \trgr(\tree) \}}.
\end{equation*}
Moreover, using~\eqref{eq:trtarg:alt:representation},
the right hand side 
can be expressed as
\begin{align*}
	 \frac{\trtarg[\A](\tree)}{\graphtarg[\A](\graph)} \1_{\{ \graph = \trgr(\tree) \}}
	 = \frac{\graphtarg[\A] \circ \trgr(\tree)}{\graphtarg[\A](\graph) \mu \circ \trgr(\tree)
	 \1_{\{ \graph = \trgr(\tree) \}}} = \frac{1}{\mu(\graph)} \1_{\trsp(\graph)}(\tree),
\end{align*}
i.e., under $\trtarg[\A]$, conditionally on the event
$\{ \trgr(\tau) = \graph \}$, the tree $\tau$ is \emph{uniformly}
distributed over the set $\trsp(\graph)$ (recall that $\mu(\graph)$
is the cardinality of $\trsp(\graph)$). In other words, a draw from
$\trtarg[\A]$ can be generated by drawing a graph according to
$\graphtarg[\A]$ and then drawing a tree uniformly over
all junction tree representations of that graph.

In step III, 
each $\untarg{m}(\rmd \x{m})$ has a density
$\untarg{m}(\x{m})=\untarg[marg]{\comb{m}}(\tree[m])
\combmeas{m}(\comb{m})$ (by abuse of notation, we reuse
the same symbol) w.r.t. $\cm{\rmd \x{m}}$, the counting measure
on $\xsp{m}$. Moreover, since $\combmeas{p} = \delta_{\intvect{1}{p}}$,
$\maintarg$ is the marginal of $\targ{p}$ with respect to the
$\tree[p]$ component. 
Further we note that $\combkernelpath{m}$ is a
Markov transition kernel from $\combsp{m}$ to $\combsp{m + 1}$.
In other words, $\combkernelpath{m}$ transforms a given $m$-combination
$\comb{m}$ into an $(m + 1)$-combination $\comb{m + 1}$ by
selecting randomly an element $s^*$ from the (non-empty) set
$\intvect{1}{p} \setminus \comb{m}$ according to
$\combkernel{m}(\comb{m}, \cdot)$ and adding the same to
$\comb{m}$. 
When selecting $s^*$, several approaches are
possible; $s^*$ can, e.g., be selected randomly from the set
$\{s \in \intvect{1}{p} : \min_{s' \in \comb{m}} |s - s'| \leq \delta\}$
for some prespecified distance $\delta \in \intvect{1}{p}$.
Also the initial distribution $\combmeas{1}$ can be designed freely,
e.g., as the uniform distribution over $\intvect{1}{p}$.

In step IV, as the reversed kernels are assumed to be Markovian and known to the user,
each extended target distribution $\targpath{m}$ is known up to the same
normalising constant $\untarg{m} \1_{\xsp{m}}$ as its marginal $\targ{m}$.
The algorithm that we propose is based on the observation that the
distribution flow $\{ \targ{m} \}_{m = 1}^p$ satisfies the recursive
\emph{Feynman-Kac model}
\begin{equation} \label{eq:Feynman:Kac:model}
    \targ{m + 1}(\rmd \x{m + 1}) = \frac{\targ{m} \uk{m}(\rmd \x{m + 1})}
    {\targ{m} \uk{m} \1_{\xsp{m + 1}}} \quad (m \in \intvect{1}{p - 1}),
\end{equation}
where we have defined the un-normalised transition kernel
\[
	\uk{m}(\x{m}, \rmd \x{m + 1}) \eqdef
	\begin{cases}
	    \displaystyle \frac{\untarg{m + 1}(\rmd \x{m + 1})
	    \, \bk{m}(\x{m + 1}, \x{m})}{\untarg{m}(\x{m})},
	    & \x{m} \in \supp(\untarg{m}), \\
	    0, & \mbox{otherwise}.
	\end{cases}
\]

In the SMC sampler framework of~\citet{10.2307/3879283},
focus is set on sampling from a \emph{sequence} of probability densities
known up to normalising constants and defined on the \emph{same} state
space. In this context, the authors propose to transform the given distribution
sequence into a sequence of distributions over state spaces of increasing
dimension (given by powers of the original space) by means an auxiliary
Markovian transition kernel. In this construction, each extended distribution
is of form (\ref{eq:def:extended:target}), with $\xsp{m} \equiv \xsp{}$ for all
$m$, and allows the original density of interest as a marginal with respect
to the last component $\x{m}$. Having access to such a flow of distributions
over spaces of increasing dimensions, standard SMC methods provide
numerically stable online approximation of the marginals, the latter satisfying
a Feynman-Kac recursion of form (\ref{eq:Feynman:Kac:model}).

In our case, we arrive at the recursion (\ref{eq:Feynman:Kac:model})
from an entirely different direction, i.e., by starting off with a \emph{single}
distribution defined on a possibly high-dimensional space and constructing
an auxiliary sequence of increasingly complex distributions used for
directing an SMC particle sample towards the distribution of interest
(see the next section).

\section{Particle approximation of temporalised weakly structural Markov laws}\label{sec:particle:methods:temp:models}
In the following we discuss how to obtain a particle interpretation of the recursion \eqref{eq:Feynman:Kac:model}. 
Assume for the moment that we have at hand a sequence $\{ \prop{m} \}_{m = 1}^{p - 1}$
of proposal kernels such that $\uk{m}(\x{m}, \cdot) \ll \prop{m}(\x{m}, \cdot)$
for all $m \in \intvect{1}{p - 1}$ and all $\x{m} \in \xsp{m}$. In our applications,
we will let these proposal kernels correspond to the so-called \emph{Christmas tree algorithm} (CTA) proposed in the companion paper \citet{cta} 
and overviewed in Section~\ref{sec:junction:tree:expanders}.
\subsection{Sequential Monte Carlo approximation} 
\label{sub:sequential_monte_carlo_approximation}


We proceed recursively and assume that we are given a sample
$\{ (\epart{m}{i}, \wgt{m}{i}) \}_{i = 1}^\N$ of particles, each particle
$\epart{m}{i} = (\combpart{m}{i}, \treepart{m}{i})$ being a random
draw in $\xsp{m}$ (more specifically, $\combpart{m}{i}$ is a random
$m$-combination in $\intvect{1}{p}$ and $\treepart{m}{i}$ a random
draw in $\zsp{\combpart{m}{i}}$), with associated importance weights
(the $\wgt{m}{i}$'s) approximating $\targ{m}$ in the sense that for all
$h \in \mf{\xfd{m}}$,
$$
    \partarg{m} h \simeq \targ{m} h \quad \mbox{as $\N \rightarrow \infty$,}
$$
where
$$
    \partarg{m}(\rmd \x{m}) \eqdef \sum_{i = 1}^\N
    \frac{\wgt{m}{i}}{\wgtsum{m}} \delta_{\epart{m}{i}}(\rmd \x{m}),
$$
with $\wgtsum{m} \eqdef \sum_{i = 1}^\N \wgt{m}{i}$, denotes the weighted
empirical measure associated with the particle sample. 

In order to produce
an updated particle sample $\{ (\epart{m + 1}{i}, \wgt{m + 1}{i}) \}_{i = 1}^\N$
approximating $\partarg{m + 1}$, we plug $\partarg{m}$ into the recursion
\eqref{eq:Feynman:Kac:model} and sample from the resulting distribution
$$
    \frac{\partarg{m} \uk{m}(\rmd \x{m + 1})}
    {\partarg{m} \uk{m} \1_{\xsp{m + 1}}}
    = \sum_{i = 1}^\N \frac{\wgt{m}{i} \uk{m}(\epart{m}{i}, \rmd \x{m + 1})}
    {\sum_{\ell = 1}^\N \wgt{m}{\ell} \uk{m} \1_{\xsp{m + 1}} (\epart{m}{\ell})}
$$
by means of importance sampling.  For this purpose we first extend
the previous measure to the index component, yielding the mixture
$$
    \check{\eta}_{m + 1}^\N(\rmd i, \rmd \x{m + 1})
    \eqdef \frac{\wgt{m}{i} \uk{m}(\epart{m}{i}, \rmd \x{m + 1})}
    {\sum_{\ell = 1}^\N \wgt{m}{\ell} \uk{m} \1_{\xsp{m + 1}}
    (\epart{m}{\ell})} \, \cm{\rmd i}
$$
on the product space $\intvect{1}{\N} \times \xsp{m + 1}$,
and sample from the latter by drawing i.i.d. samples
$\{ (\ind{m + 1}{i}, \epart{m + 1}{i}) \}_{i = 1}^\N$ from
the proposal distribution
$$
	\rho_{m + 1}^\N (\rmd i, \rmd \x{m + 1}) \eqdef \frac{\wgt{m}{i}}{\wgtsum{m}}
	\prop{m}(\epart{m}{i}, \rmd \x{m + 1}) \, \cm{\rmd i}.
$$
Each draw $(\ind{m + 1}{i}, \epart{m + 1}{i})$ is assigned
an importance weight
$$
	\wgt{m + 1}{i} \eqdef \wgtfunc{m}(\epart{m}{{\ind{m + 1}{i}}}, \epart{m + 1}{i})
	\propto \frac{\rmd \check{\eta}_{m + 1}^\N}{\rmd \rho_{m + 1}^\N}
	    (\ind{m + 1}{i}, \epart{m + 1}{i}),
$$
where we have defined the importance weight function
\begin{equation} \label{eq:importance:weights}
	\wgtfunc{m}(\x{m}, \x{m + 1}) \eqdef \frac{\rmd \uk{m}(\x{m}, \cdot)}
	{\rmd \prop{m}(\x{m}, \cdot)}(\x{m + 1})
	= \frac{\untarg{m + 1}(\x{m + 1}) \bk{m}(\x{m + 1}, \x{m})}
	    {\untarg{m}(\x{m}) \prop{m}(\x{m}, \x{m + 1})}.
\end{equation}
Finally, the weighted empirical measure
$$
	\partarg{m + 1}(\rmd \x{m + 1}) \eqdef \sum_{i = 1}^\N
	\frac{\wgt{m + 1}{i}}{\wgtsum{m + 1}}
	\delta_{\epart{m + 1}{i}}(\rmd \x{m + 1})
$$
is returned as an approximation of $\targ{m + 1}$.

We will always assume
that the proposal kernel $\prop{m}$ is of form
\begin{equation} \label{eq:proposal:special:form}
	\prop{m}(\x{m}, \rmd \x{m + 1})
	= \combkernelpath{m}(\comb{m}, \rmd \comb{m + 1}) \,
	\prop[\comb{m}, \comb{m + 1}]{m}(\tree[m], \rmd \tree[m + 1]),
\end{equation}
where $ \combkernelpath{m}$ is defined in \eqref{eq:def:comb:kernel:path}
and for all $(\comb{m}, \comb{m + 1}) \in \combsp{m} \times \combsp{m + 1}$,
$\prop[\comb{m}, \comb{m + 1}]{m}$ is a Markov transition kernel from
$\xsp{\comb{m}}$ to $\xsp{\comb{m + 1}}$. Each law
$\prop[\comb{m}, \comb{m + 1}]{m}(\tree[m], \cdot)$, $\tree[m] \in \trsp[\comb{m}]$,
has a probability function, which we denote by the same symbol. Note
that the assumption \eqref{eq:proposal:special:form} implies that for
all $i \in \intvect{1}{N}$,
$$
	\combpart{1:m | m + 1}{i} = \combpart{m}{\ind{m + 1}{i}},
$$
and, consequently, by \eqref{eq:def:comb:kernel:path},
$$
	\combmeas{m + 1}(\combpart{m + 1}{i})
	= \combmeas{m} \combkernelpath{m}(\combpart{m + 1}{i})
	=  \combmeas{m}(\combpart{m}{\ind{m + 1}{i}})
	\combkernelpath{m}(\combpart{m}{\ind{m + 1}{i}}, \combpart{m + 1}{i}).
$$
Thus, the importance weight \eqref{eq:importance:weights}
simplifies according to
\begin{equation*}
\begin{split}
\lefteqn{\wgtfunc{m}(\epart{m}{{\ind{m + 1}{i}}}, \epart{m + 1}{i})}
\hspace{3mm} \\
&= \frac{\untarg[marg]{\combpart{m + 1}{i}}(\zpart{m + 1}{i})
\combmeas{m + 1}(\combpart{m + 1}{i}) \bk{m}(\epart{m}{\ind{m + 1}{i}}, \epart{m + 1}{i})}
{\untarg[marg]{\combpart{m}{\ind{m + 1}{i}}}(\zpart{m}{\ind{m + 1}{i}})
\combmeas{m}(\combpart{m}{\ind{m + 1}{i}})
\combkernelpath{m}(\combpart{m}{\ind{m + 1}{i}}, \combpart{m + 1}{i})
\prop[\combpart{m}{\ind{m + 1}{i}}, \combpart{m + 1}{i}]{m}
(\treepart{m}{\ind{m + 1}{i}}, \treepart{m + 1}{i})} \\
&=\frac{\untarg[marg]{\combpart{m + 1}{i}}(\zpart{m + 1}{i})
\bk{m}(\epart{m}{\ind{m + 1}{i}}, \epart{m + 1}{i})}
{\untarg[marg]{\combpart{m}{\ind{m + 1}{i}}}(\treepart{m}{\ind{m + 1}{i}})
\prop[\combpart{m}{\ind{m + 1}{i}}, \combpart{m + 1}{i}]{m}
(\treepart{m}{\ind{m + 1}{i}}, \treepart{m + 1}{i})}.
\end{split}
\end{equation*}

Further, we have the identity
\begin{align}
	\frac{\untarg[marg]{\combpart{m + 1}{i}}(\treepart{m + 1}{i})}
	{\untarg[marg]{\combpart{m}{\ind{m + 1}{i}}}(\zpart{m}{\ind{m + 1}{i}})}
	&= \frac{\mu \circ \trgr(\treepart{m}{\ind{m + 1}{i}})}{\mu \circ \trgr(\treepart{m + 1}{i})}\times\frac{
	\prod_{\cl \in \clset{\trgr(\treepart{m + 1}{i})}} \graphmap(\cl)
	\prod_{\cl \in \clset{\trgr(\treepart{m}{\ind{m + 1}{i}})}} \graphmap(\cl)^{-1}}
	{ \prod_{S \in \sepset{\trgr(\treepart{m + 1}{i})}}
	\graphmap(S) \prod_{S \in \sepset{\trgr(\treepart{m}{\ind{m + 1}{i}})}}
	\graphmap(S)^{-1} } \nonumber \\
	&= \frac{\mu \circ \trgr(\treepart{m}{\ind{m + 1}{i}})}{\mu \circ \trgr(\treepart{m + 1}{i})}\times\frac{
	\prod_{\cl \in \clset{\trgr(\treepart{m + 1}{i})} \triangle
	\clset{\trgr(\treepart{m}{\ind{m + 1}{i}})}}
	\graphmap(\cl)^{\exponent \langle \treepart{m + 1}{i} \rangle(\cl)}}
	{
	\prod_{S \in \sepset{\trgr(\treepart{m + 1}{i})} \triangle
	\sepset{\trgr(\treepart{m}{\ind{m + 1}{i}})}}
	\graphmap(S)^{\exponent \langle \treepart{m + 1}{i} \rangle(S)}},
	\label{eq:local:weight:update}
\end{align}
where $\triangle$ denotes symmetric difference and
$$
	\exponent \langle \treepart{m + 1}{i} \rangle (\cl)
	\eqdef 2 \1_{\clset{\trgr(\treepart{m + 1}{i})}}(\cl) - 1
$$
($\exponent \langle \treepart{m + 1}{i} \rangle (S)$ is defined similarly).

The computational burden involved in computing the first factor of (\ref{eq:local:weight:update}) can be substantially reduced by exploiting the factorisation presented in \citet[Theorem 7]{cta} and restated below.
Let $\graph[m+1]\in \graphsp[m+1]$ be a graph expanded from a graph $\graph[m]\in \graphsp[m]$ in the sense that $\graph[m+1][\{1,\dots,m\}]=\graph[m]$, then we can define the set $\mathcal S^\star\subset\sepset{\graph[m+1]}$ consisting of the separators created by the expansion.
The factorisation is then given as
\begin{align*}
    \frac{\mu (\graph[m])}{\mu(\graph[m+1])} =\frac{\prod \limits_{s \in \mathcal U_1}\nu_\mathsmaller{\graph[]}(s)}{\prod\limits_{s \in \mathcal U_2}{\nu_\mathsmaller{\graph[m+1]}(s)} },
\end{align*}
where
$\mathcal U_1 = \{s\in \sepset{\graph[m]} : \exists s^\prime \in \mathcal S^\star  \text{, such that } s\subset s^\prime\}$
and
$\mathcal U_2 = \{ s \in \sepset{\graph[m+1]} : \exists s^\prime \in \mathcal S^\star \text{, such that } s\subset s^\prime\}$
are the set of separators in $\graph[m]$ and $\graph[m+1]$ respectively, contained in some separator in $\mathcal S^\star$.
The function $\nu_\mathsmaller{\graph}(s)$ denotes the number of equivalent junction trees that can be obtained by randomizing a junction tree for the graph $\graph$ at the separator $s$.
For a more detailed presentation see~\citet{cta}.
The sets  $\clset{\trgr(\treepart{m + 1}{i})} \triangle \clset{\trgr(\treepart{m}{\ind{m + 1}{i}})}$ and $\sepset{\trgr(\treepart{m + 1}{i})} \triangle \sepset{\trgr(\treepart{m}{\ind{m + 1}{i}})}$ in the second factor might be composed by only a few cliques and separators, respectively, and computing the products in the numerator and denominator of \eqref{eq:local:weight:update} will in that case be an easy operation. 
This is the case for the CTA described in Section~\ref{sec:junction:tree:expanders} below.

In summary the identity \eqref{eq:local:weight:update} suggests that the first part of the importance weights may, in principle, be computed with a complexity that does not increase with the iteration index $m$ as long as the proposal kernel $\prop{m}^\ast$ only modifies and extends \emph{locally} the junction tree (and, consequently, the underlying graph).

The SMC update described above is summarised in Algorithm~\ref{alg:SMC:update}.
Here and in the following, we let $\disc(\{ a_\ell \}_{\ell = 1}^\N)$ denote the categorical probability distribution induced by a set $\{ a_\ell \}_{\ell = 1}^\N$ of positive
(possibly unnormalised) numbers; thus, writing $W \sim \disc(\{ a_\ell \}_{\ell = 1}^\N)$
means that the variable $W$ takes the value $\ell \in \intvect{1}{\N}$ with
probability $a_\ell / \sum_{\ell' = 1}^\N a_{\ell'}$. 

\bigskip
\begin{algorithm}[H] \label{alg:SMC:update}
     \KwData{$\{ (\epart{m}{i}, \wgt{m}{i}) \}_{i = 1}^\N$}
     \KwResult{$\{ (\epart{m + 1}{i}, \wgt{m + 1}{i}) \}_{i = 1}^\N$}
     \For{$i \gets 1, \ldots, \N$}{
         draw $\ind{m + 1}{i} \sim \disc( \{ \wgt{m}{\ell} \}_{\ell = 1}^\N)$\;
         draw $\combpart{m + 1}{i} \sim \combkernelpath{m}
         (\combpart{m}{\ind{m + 1}{i}}, \rmd \comb{m + 1})$\;
         draw $\treepart{m + 1}{i} \sim \prop[\combpart{m}{\ind{m + 1}{i}}, \combpart{m + 1}{i}]{m}
         (\treepart{m}{\ind{m + 1}{i}}, \rmd \tree[m + 1])$\;
         set $\epart{m + 1}{i} \gets (\combpart{m + 1}{i}, \treepart{m + 1}{i})$\;
         set $\displaystyle \wgt{m + 1}{i} \gets \frac{\untarg[marg]{\combpart{m + 1}{i}}
         (\zpart{m + 1}{i}) \bk{m}(\epart{m}{\ind{m + 1}{i}}, \epart{m + 1}{i})}
         {\untarg[marg]{\combpart{m}{\ind{m + 1}{i}}}(\zpart{m}{\ind{m + 1}{i}})
         \prop[\combpart{m}{\ind{m + 1}{i}}, \combpart{m + 1}{i}]{m}
         (\zpart{m}{\ind{m + 1}{i}}, \zpart{m + 1}{i})}$\;
     }
     \caption{SMC update}
\end{algorithm}
\bigskip

Naturally, the SMC algorithm is initialised by drawing i.i.d.~draws
$(\epart{1}{i})_{i = 1}^\N$ from some initial distribution
$\partinit \in \probmeas{\xfd{1}}$ and letting $\wgt{1}{i} =
\untarg{1}(\epart{1}{i}) / \partinit(\epart{1}{i})$ for all $i$, where
the density (with respect to $\rmd \x{1}$) of $\partinit$ is
denoted by the same symbol. In addition, letting $\partinit$
be of form
$$
	\partinit(\rmd \x{1}) = \combmeas{1}(\rmd \comb{1})
	\partinit[\comb{1}](\rmd \tree[1])
$$
yields the weights $\wgt{1}{i} = \untarg[marg]{\combpart{1}{i}}(\zpart{1}{i})
/ \partinit[\combpart{1}{i}](\zpart{1}{i})$.

As a by-product, Algorithm~\ref{alg:SMC:update} provides, for all
$m \in \intvect{1}{p}$ and $h \in \mf{\xfd{p}}$, unbiased estimators
$$
	\unpartarg{m} h \eqdef \frac{1}{\N^m} \left( \prod_{\ell = 1}^{m - 1}
	\wgtsum{\ell} \right) \sum_{i = 1}^\N \wgt{m}{i} h(\epart{m}{i})
$$
of $\untarg{m} h$. In particular,
$$
\unpartarg{p} \1_{\xsp{p}} = \frac{1}{\N^p} \prod_{\ell = 1}^p \wgtsum{\ell}
$$
is an unbiased estimator of the normalising constant $\untarg{p} \1_{\xsp{p}}
= \untrtarg \1_{\trsp[p]}$ of the distribution of interest. 

\subsubsection{Design of retrospective dynamics} 
\label{sub:design_of_retrospective_dynamics}
As we will see, the reversed kernels $\{\bk{m} \}_{m = 1}^{p - 1}$ will
typically be designed on the basis of the forward proposal kernels
$\{ \prop{m} \}_{m = 1}^{p - 1}$. It is clear that for all $m \in \intvect{1}{p - 1}$,
the constraint that $\uk{m}(\x{m}, \cdot) \ll \prop{m}(\x{m}, \cdot)$
for all $\x{m} \in \xsp{m}$ is satisfied as soon as the retrospective
kernel $\bk{m}$ is such that $\supp (\bk{m}(\x{m + 1}, \cdot))
\subseteq \supp (\prop{m}(\cdot, \x{m + 1}))$ for all $\x{m + 1}
\in \supp(\untarg{m + 1})$. Consequently, if for all $m \in \intvect{1}{p - 1}$,
\begin{equation} \label{eq:support:condition:proposal}
    \supp(\targ{1} \prop{1} \cdots \prop{m}) = \supp(\untarg{m + 1}),
\end{equation}
one may, e.g., construct each retrospective kernel $\bk{m}$
by identifying, for all $\x{m + 1} \in \supp(\untarg{m + 1})$, a
nonempty set $$\retrosupp{m}(\x{m + 1}) \subseteq
\supp (\prop{m}(\cdot, \x{m + 1})) \cap \supp(\untarg{m}),$$
and letting
\begin{align}
    \label{eq:support:condition:bk}
    \bk{m}(\x{m + 1}, \x{m}) \eqdef |\retrosupp{m}(\x{m + 1})|^{-1}
    \1_{\retrosupp{m}(\x{m + 1})}(\x{m})
    \quad (\x{m + 1} \in \supp(\untarg{m + 1})),
\end{align}
i.e., $\bk{m}(\x{m + 1}, \rmd \x{m})$ is the uniform distribution
over $\retrosupp{m}(\x{m + 1})$. The existence of such a nonempty
set is guaranteed by (\ref{eq:support:condition:proposal}). Indeed,
let $\x{m + 1} \in \supp(\untarg{m + 1})$; then, by
(\ref{eq:support:condition:proposal}),
$$
	\sum_{\x{m} \in \xsp{m}} \targ{1} \prop{1} \cdots
	\prop{m}(\x{m}) \prop{m}(\x{m}, \x{m + 1}) > 0,
$$
i.e., there exists at least one $x_m^\ast \in \xsp{m}$ such that
\(\targ{1} \prop{1} \cdots \prop{m}(x_m^\ast) > 0 \) and
$\prop{m}(x_m^\ast, \x{m + 1}) > 0$. Thus, again by
(\ref{eq:support:condition:proposal}), $x_m^\ast \in
\supp(\prop{m}(\cdot, \x{m + 1})) \cap \supp(\untarg{m})$,
which is hence nonempty. For $\x{m + 1} \notin
\supp(\untarg{m + 1})$, $\bk{m}(\x{m + 1}, \rmd \x{m})$
may be defined arbitrarily. As we will see next, the
property~\eqref{eq:support:condition:proposal} is satisfied
by the junction tree expanders used by us.

\subsubsection{The Christmas tree algorithm}\label{sec:junction:tree:expanders}
Following the presentation of~\citet{cta} we disregard, without loss of generality, the permutations of the nodes for the underlying graphs specified by $\trexfd{m}$.
This implies that we consider a fixed set of ordered nodes $\comb{m}=(1,\dots,m) \in \combsp{m}$ and by $\trsp[m]$ we mean $\trsp[\comb{m}]$.

As previously mentioned $\{\prop{m}\}_{m=1}^{\p-1}$ and $\{\bk{m}\}_{m=1}^{\p-1}$ will here correspond to the kernels induced by the CTA and its reversed version, respectively.
The CTA kernel takes as input a junction tree $\tree[m] \in \trsp[m]$ and expands it into a new junction tree $\tree[m+1]\in \trsp[m+1]$ according to $\prop{m}(\tree[m],\rmd\tree[m+1])$ by adding the internal node $m+1$ to the underlying graph $\trgr(\tree[m])$ in such a way that $\trgr(\tree[m+1])[\{1,\dots,m\}] = \trgr(\tree[m])$.
It requires two input parameters $(\alpha,\beta)\in (0,1)^2$ jointly controlling the sparsity of the produced underlying graph.
Specifically, at the initial step of the algorithm, a Bernoulli trial with parameter $\beta$ is performed in order to determine whether or not the internal node $m+1$ is being isolated in the underlying graph of the produced tree.
If $m+1$ is not isolated, a high value of the parameter $\alpha$ controls the number of cliques in $\tree[m+1]$ that will contain $m+1$.
In this sense, $\prop{m}(\tree[m],\rmd \tree[m+1])$ is a mixture distribution with weight parameter $\beta$.

\subsection{Particle Gibbs sampling}\label{sec:particle:Gibbs:sampling}

In the following, we discuss how to sample from the extended target
$\targ{1:p}$, having the distribution $\maintarg$ of interest
as a marginal distribution, using \emph{Markov chain Monte Carlo}
(McMC) methods. A \emph{particle Gibbs} (PG)
\emph{sampler} constructs, using SMC, a Markov
kernel $\PG{p}$ leaving $\targ{1:p}$ invariant.
Algorithmically, the more or less only difference between the PG
kernel and the standard SMC algorithm is that the PG kernel,
which is described in detail in Algorithm~\ref{alg:particle:Gibbs:kernel},
evolves the particle cloud \emph{conditionally} on a fixed reference
trajectory specified \emph{a priori}; this \emph{conditional SMC}
algorithm is constituted by Lines~1--16 in Algorithm~\ref{alg:particle:Gibbs:kernel}.
After having evolved, for $p$ time steps, the particles of the conditional
SMC algorithm, the PG kernel draws randomly a particle from the last
generation (Lines~17--19), traces the genealogical history of the selected
particle back to the first generation (Lines~20--22), and returns the
traced path (Line~23).

As as established in \cite[Proposition~8]{chopin:singh:2015}, $\PG{p}$
is $\targ{1:p}$-reversible and thus leaves $\targ{1:p}$ invariant. Interestingly,
reversibility holds true for any particle sample size $\N \in \nsetpos \setminus \{ 1 \}$.
Thus, on the basis of $\PG{p}$, the PG sampler generates
(after possible burn-in) a Markov chain $\{ \X{1:p}[\ell] \}_{\ell \in \nsetpos}$
 according to
$$
    \X{1:p}[1] \stackrel{\PG{p}}{\longrightarrow}
    \X{1:p}[2] \stackrel{\PG{p}}{\longrightarrow}
    \X{1:p}[3] \stackrel{\PG{p}}{\longrightarrow}
    \X{1:p}[4] \rightarrow \cdots
$$
and returns $\sum_{\ell = 1}^\Nmcmc h(\X{1:p}[\ell]) / \Nmcmc$ as
an estimate of $\targ{1:p} h$ for any $\targ{1:p}$-integrable objective
function $h \in \mf{\xfd{1:p}}$. Here $\Nmcmc \in \nsetpos$ denotes the
McMC sample size. In particular, in the case where the objective
function $h$ depends on the argument $\tree[p]$ only, we obtain the
estimator
\begin{align}
    \sum_{\ell = 1}^\Nmcmc h(\Z{p}[\ell]) / \Nmcmc
    \label{eq:mcmc_estimator}
\end{align}
 of $\maintarg h$,
where each $\Z{p}[\ell]$ variable is extracted, on Line~18, at iteration
$\ell - 1$. of Algorithm~\ref{alg:particle:Gibbs:kernel}.
\begin{algorithm}[ht]\label{alg:particle:Gibbs:kernel}
     \KwData{a reference trajectory $\x{1:p} \in \xsp{1:p}$}
     \KwResult{a draw $\X{1:p}$ from $\PG{p}(\x{1:p}, \rmd \x{1:p}')$}
     \For{$i \gets 1, \ldots, \N - 1$}{
         draw $\combpart{1}{i} \sim \combmeas{1}(\rmd \comb{1})$\;
         draw $\zpart{1}{i} \sim \partinit[\combpart{1}{i}](\rmd \tree[1])$\;
         set $\epart{1}{i} \gets (\combpart{1}{i}, \zpart{1}{i})$\;
     }
     set $\epart{1}{\N} \gets \x{1}$\;
     \For{$i \gets 1, \ldots, \N$}{
         set $\wgt{1}{i} \gets \untarg[marg]{\combpart{1}{i}}(\zpart{1}{i})
         / \partinit[\combpart{1}{i}](\zpart{1}{i})$\;
     }
     \For{$m \gets 1, \ldots, p - 1$}{
         \For{$i \gets 1, \ldots, \N - 1$}{
             draw $\ind{m + 1}{i} \sim \disc( \{ \wgt{m}{\ell} \}_{\ell = 1}^\N)$\;
             draw $\combpart{m + 1}{i} \sim \combkernelpath{m}
             (\combpart{m}{\ind{m + 1}{i}}, \rmd \comb{m + 1})$\;
             draw $\zpart{m + 1}{i} \sim \prop[\combpart{m}{\ind{m + 1}{i}}, \combpart{m + 1}{i}]{m}
             (\zpart{m}{\ind{m + 1}{i}}, \rmd \tree[m + 1])$\;
             set $\epart{m + 1}{i} \gets (\combpart{m + 1}{i}, \zpart{m + 1}{i})$\;
         }
         set $\epart{m + 1}{\N} \gets \x{m + 1}$\;
         \For{$i \gets 1, \ldots, \N$}{
             set $\displaystyle \wgt{m + 1}{i} \gets \frac{\untarg[marg]{\combpart{m + 1}{i}}
             (\zpart{m + 1}{i}) \bk{m}(\epart{m}{\ind{m + 1}{i}}, \epart{m + 1}{i})}
             {\untarg[marg]{\combpart{m}{\ind{m + 1}{i}}}(\zpart{m}{\ind{m + 1}{i}})
             \prop[\combpart{m}{\ind{m + 1}{i}}, \combpart{m + 1}{i}]{m}
             (\zpart{m}{\ind{m + 1}{i}}, \zpart{m + 1}{i})}$\;
         }
     }
     draw $\gen{p} \sim \disc( \{ \wgt{p}{\ell} \}_{\ell = 1}^\N )$\;
     set $\Z{p} \gets \zpart{p}{\gen{p}}$\;
     set $\X{p} \gets (\intvect{1}{p}, \Z{p})$\;
     \For{$m \gets p - 1, \ldots, 1$}{
         set $\gen{m} \gets \ind{m}{\gen{m + 1}}$\;
         set $\X{m} \gets \epart{m}{\gen{m + 1}}$\;
     }
     set $\X{1:p} \gets (\X{1}, \ldots, \X{p})$\;
     \Return{$\X{1:p}$}
     \caption{One transition of PG.}
\end{algorithm}

\subsection{Particle Gibbs with systematic refreshment}\label{sec:particle:Gibbs:refreshment}

For the graph-oriented applications of interest in the present paper,
the naive implementation of the PG sampler will suffer from bad mixing,
even though the distribution of interest, $\maintarg$, is defined only on
the marginal space $\trsp[p]$. Thus, we will modify slightly
the standard PG sampler by inserting an intermediate \emph{refreshment step}
in between the PG iterations. More specifically, define
$$
    \G{p}(\x{1:p}, \rmd \x{1:p}')
    = \delta_{\x{p}}(\rmd \x{p}') \prod_{m = 1}^{p - 1}
    \bk{m}(\x{m + 1}', \rmd \x{m}') \quad (\x{1:p} \in \xsp{1:p}).
$$
Given $\x{1:p}$, drawing $\X{1:p} \sim \G{p}(\x{1:p}, \rmd \x{1:p}')$
amounts to setting, deterministically, $\X{p} = \x{p}$ and simulating
$\X{1:p - 1}$ according to the Markovian retrospective dynamics
induced by the kernels $\{ \bk{m} \}_{m = 1}^{p - 1}$.  Note that
each distribution $\G{p}(\x{1:p}, \rmd \x{1:p}')$ depends exclusively
on $\x{p}$. Describing a standard Gibbs substep for sampling from
$\targ{1:p}$, $\G{p}$ is $\targ{1:p}$-reversible; see, e.g.,
\cite[Proposition~6.2.14]{cappe:moulines:ryden:2005}.
Consequently, also the product kernel $\PG{p} \G{p}$ is
$\targ{1:p}$-invariant. Unlike standard PG, the McMC sampling
scheme that we propose, which is summarised in
Algorithm~\ref{alg:particle:Gibbs:refreshment}, generates
(after possible burn-in) a Markov chain
$\{ \X{1:p}[\ell] \}_{\ell \in \nsetpos}$ according to
$$
    \X{1:p}[1] \stackrel{\PG{p} \G{p}}{\longrightarrow}
    \X{1:p}[2] \stackrel{\PG{p} \G{p}}{\longrightarrow}
    \X{1:p}[3] \stackrel{\PG{p} \G{p}}{\longrightarrow}
    \X{1:p}[4] \rightarrow \cdots
$$
and returns
$$
    \targMCMC{1:p}{\Nmcmc} h \eqdef \frac{1}{\Nmcmc}
    \sum_{\ell = 1}^\Nmcmc h(\X{1:p}[\ell])
$$
as an estimator of $\targ{1:p} h$ for any $\targ{1:p}$-integrable
function $h$. In addition, as previously, in the case where the
objective functions $h$ depends on the argument $\tree[p]$ only,
we obtain the estimator
\begin{align}
    \maintargMCMC{\Nmcmc} h \eqdef \frac{1}{\Nmcmc}
    \sum_{\ell = 1}^\Nmcmc h(\Z{p}[\ell])
    \label{eq:mcmc_estimator2}
\end{align}
of $\maintarg h$, where each $\Z{p}[\ell]$ variable is extracted,
on Line~2, at iteration $\ell - 1$ of Algorithm~\ref{alg:particle:Gibbs:refreshment}.
\bigskip

\begin{algorithm}[H] \label{alg:particle:Gibbs:refreshment}
     \KwData{a reference trajectory $\x{1:p} \in \xsp{1:p}$}
     \KwResult{a draw $\X{1:p}$ from $\PG{p}\G{p}(\x{1:p}, \rmd \x{1:p}')$}
     draw, using Algorithm~\ref{alg:particle:Gibbs:kernel},
     $\X{1:p}' \sim \PG{p}(\x{1:p}, \rmd \x{1:p}')$\;
     set $\X{p} = (\intvect{1}{p}, \Z{p}) \gets \X{p}'$\;
     \For{$m \gets p - 1, \ldots, 1$}{
         draw $\X{m} \sim \bk{m}(\X{m + 1}, \rmd \x{m})$\;
     }
     set $\X{1:p} \gets (\X{1}, \ldots, \X{p})$\;
     \Return{$\X{1:p}$}
     \caption{One transition
     of PG with systematic refreshment.}
\end{algorithm}

\bigskip

\subsubsection{Particle Gibbs with systematic refreshment vs.~standard particle Gibbs}
\label{sec:theoretical:results}
As established by the following theorem, the systematic refreshment
step improves indeed the mixing of the algorithm. For any functions
$g$ and $h$ in $\Lp{2}(\targ{1:p})$ we define the \emph{scalar product}
$\langle g, h \rangle \eqdef \targ{1:p}(g h)$. Moreover, for all $\targ{1:p}$-invariant
Markov kernels $\kernel{M}$ on $(\xsp{1:p}, \xfd{1:p})$ and functions
$h \in \Lp{2}(\targ{1:p})$ such that
\begin{equation} \label{eq:summation:condition}
	\sum_{\ell = 1}^\infty | \langle h, \kernel{M}^\ell h \rangle | < \infty,
\end{equation}
we define the asymptotic variance
\begin{equation} \label{eq:asymptotic:variance}
	v(h, \kernel{M}) \eqdef \lim_{\Nmcmc \rightarrow \infty}
	\frac{1}{\Nmcmc} \operatorname{Var}
	\left( \sum_{\ell = 1}^\Nmcmc h(\X{1:p}[\ell]) \right),
\end{equation}
where $\{ \X{1:p}[\ell] \}_{\ell = 1}^\infty$ is a Markov chain with initial
distribution $\targ{1:p}$ and transition kernel $\kernel{M}$. (The assumption
\eqref{eq:summation:condition} can be shown to imply the existence of
the limit \eqref{eq:asymptotic:variance}). In the case where the latter
Markov chain satisfies a central limit theorem for the objective function
$h$, the corresponding asymptotic variance is given by
\eqref{eq:asymptotic:variance}. As established by the following result,
whose proof relies on asymptotic theory for inhomogeneous
Markov chains developed in \cite{maire:douc:olsson:2014},
the improved mixing implied by systematic refreshment of the
trajectories implies a decrease of asymptotic variance w.r.t. standard PG.

\begin{theorem}
\label{thm:asymvar}
	For all $\N \in \nsetpos$ and all functions $h^\ast \in \Lp{2}(\maintarg)$
	such that both $\PG{p}\G{p}$ and $\PG{p}$ satisfy the summation condition
	\eqref{eq:summation:condition} with $h \eqdef \1_{\xsp{1:p - 1}} \varotimes h^\ast$,
	it holds that
	$$
		v(h, \PG{p}\G{p}) \leq v(h, \PG{p}).
	$$
\end{theorem}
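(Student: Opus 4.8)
The plan is to replace the non-reversible product $\PG{p}\G{p}$ by a $\targ{1:p}$-reversible kernel that has the same asymptotic variance for $h$, and then to compare reversible kernels via a variational characterisation of the asymptotic variance, exploiting that $\G{p}$ equilibrates exactly the coordinates on which $h$ does not depend. Throughout, write $P \eqdef \PG{p}$ and $Q \eqdef \G{p}$, regarded as operators on $\Lp{2}(\targ{1:p})$; note $h \in \Lp{2}(\targ{1:p})$ because $\maintarg$ is the $\x{p}$-marginal of $\targ{1:p}$. Both $P$ and $Q$ are self-adjoint: $P$ by the $\targ{1:p}$-reversibility established above via \cite[Proposition~8]{chopin:singh:2015}, and $Q$ because, by \eqref{eq:def:extended:untarget}, under $\targ{1:p}$ the conditional law of $\x{1:p - 1}$ given $\x{p}$ is precisely $\prod_{m = 1}^{p - 1} \bk{m}(\x{m + 1}, \rmd \x{m})$, so that $\G{p}$ is nothing but the conditional-expectation operator given the last component $\x{p}$; in particular $Q$ is an orthogonal projection ($Q = Q^\ast = Q^2$, $\|Q\| \le 1$). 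Since $h$ — which by definition equals $\1_{\xsp{1:p - 1}} \varotimes h^\ast$ — depends only on $\x{p}$, we have $Q h = h$. Finally, replacing $h$ by $h - \targ{1:p} h$ alters neither the asymptotic variances (invariant under additive constants) nor the identity $Q h = h$ (as $Q$ preserves constants), so we may assume henceforth that $\targ{1:p} h = 0$.

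First I would reduce to a reversible kernel. Since $PQ$ is $\targ{1:p}$-invariant, a dominated-convergence argument applied to the Cesàro average of the stationary autocovariances $\langle h, (PQ)^\ell h \rangle$ — legitimate under \eqref{eq:summation:condition} — gives $v(h, PQ) = \|h\|^2 + 2 \sum_{\ell = 1}^\infty \langle h, (PQ)^\ell h \rangle$. Using $Q h = h$, $Q = Q^\ast$ and the elementary identity $Q (PQ)^\ell Q = (QPQ)^\ell$ (expand the left-hand side and collapse the repeated factors $Q^2 = Q$), each term satisfies $\langle h, (PQ)^\ell h \rangle = \langle Q h, (PQ)^\ell Q h \rangle = \langle h, (QPQ)^\ell h \rangle$. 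Thus the autocovariance series of $PQ$ at $h$ coincides term by term with that of the $\targ{1:p}$-reversible kernel $QPQ = \G{p}\PG{p}\G{p}$; in particular \eqref{eq:summation:condition} holds for $QPQ$ as well, and $v(h, PQ) = v(h, QPQ)$.

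Second, I would establish $v(h, QPQ) \le v(h, P)$. Both $QPQ$ and $P$ are $\targ{1:p}$-reversible, so — and this is the step where I would invoke the asymptotic theory for inhomogeneous Markov chains of \cite{maire:douc:olsson:2014}, which legitimises, under the sole summation condition \eqref{eq:summation:condition}, the finiteness and the variational representation of the asymptotic variance — for every $\targ{1:p}$-reversible kernel $\kernel{M}$ one has
\begin{equation*}
	v(h, \kernel{M}) = 2 \sup_{g \in \Lp{2}(\targ{1:p})}
	\bigl\{ 2 \langle h, g \rangle - \|g\|^2 + \langle g, \kernel{M} g \rangle \bigr\} - \|h\|^2 .
\end{equation*}
Apply this with $\kernel{M} = QPQ$ and fix $g \in \Lp{2}(\targ{1:p})$. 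Since $h = Q h$ and $Q = Q^\ast$, $\langle h, g \rangle = \langle h, Q g \rangle$; moreover
\begin{align*}
	\|g\|^2 - \langle g, QPQ g \rangle
	&= \|g\|^2 - \langle Q g, P Q g \rangle \\
	&= \bigl( \|Q g\|^2 - \langle Q g, P Q g \rangle \bigr) + \bigl( \|g\|^2 - \|Q g\|^2 \bigr) \\
	&\ge \|Q g\|^2 - \langle Q g, P Q g \rangle ,
\end{align*}
because $\|Q g\| \le \|g\|$. Consequently
\begin{align*}
	2 \langle h, g \rangle - \|g\|^2 + \langle g, QPQ g \rangle
	&\le 2 \langle h, Q g \rangle - \|Q g\|^2 + \langle Q g, P Q g \rangle \\
	&\le \sup_{g' \in \Lp{2}(\targ{1:p})} \bigl\{ 2 \langle h, g' \rangle - \|g'\|^2 + \langle g', P g' \rangle \bigr\} ,
\end{align*}
and taking the supremum over $g$ yields $v(h, QPQ) \le v(h, P)$. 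Combining this with the first step gives $v(h, \PG{p}\G{p}) = v(h, QPQ) \le v(h, \PG{p})$, which is the claimed inequality.

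The operator algebra above, and the identification of $\G{p}$ as a conditional-expectation operator, are routine. The genuine obstacle — and precisely the point at which \cite{maire:douc:olsson:2014} is needed — is the analytic bookkeeping: one must justify, under assumption \eqref{eq:summation:condition} alone (rather than a spectral-gap or geometric-ergodicity hypothesis), both that the asymptotic variance of the non-reversible kernel $\PG{p}\G{p}$ is finite and correctly given by the autocovariance series — so that it is legitimately identified with the asymptotic variance of the reversible kernel $\G{p}\PG{p}\G{p}$ — and that the displayed variational formula is valid for the reversible kernels involved. I expect this to be the only real difficulty; once it is in place, the comparison itself follows at once from the structural fact that $\G{p} h = h$.
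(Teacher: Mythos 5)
Your proof is correct, but it takes a genuinely different route from the paper's. The paper's proof is essentially a three-line verification of hypotheses: it records that $\PG{p}$ is $\targ{1:p}$-reversible (by \cite[Proposition~8]{chopin:singh:2015}), that $\G{p}$ is $\targ{1:p}$-reversible as a Gibbs substep, that $\G{p}$ preserves the last coordinate, and that $\G{p}$ trivially dominates the Dirac kernel off-diagonal; it then invokes \cite[Lemma~18]{maire:douc:olsson:2014} as a black box. You instead prove the comparison from scratch: you identify $\G{p}$ as the orthogonal projection onto functions of $\x{p}$ (so that $\G{p}h = h$), reduce the non-reversible product $\PG{p}\G{p}$ to the reversible sandwich $\G{p}\PG{p}\G{p}$ by the term-by-term autocovariance identity, and conclude via the variational characterisation of the asymptotic variance together with $\|\G{p}g\| \le \|g\|$. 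This is, in substance, a self-contained proof of the special case of the cited lemma that the theorem needs, and it has the merit of making the mechanism transparent — the refreshment helps precisely because it is a conditional expectation that fixes $h$. What it costs is that the one genuinely delicate analytic point, which you correctly flag, is left to the literature anyway: namely that under the bare summation condition \eqref{eq:summation:condition} (no spectral gap) the limit \eqref{eq:asymptotic:variance} exists, is given by the autocovariance series for the non-reversible kernel $\PG{p}\G{p}$, and admits the displayed variational representation for the reversible kernels involved. That is exactly the content the paper delegates to \cite{maire:douc:olsson:2014}, so your argument and the paper's ultimately rest on the same external input; yours simply replaces the remaining combinatorial/ordering step of the cited lemma by an explicit projection argument. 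Two small points worth making explicit if you write this up: the identity $\G{p}(\PG{p}\G{p})^{\ell}\G{p} = (\G{p}\PG{p}\G{p})^{\ell}$ is used only for $\ell \ge 1$ (it fails at $\ell = 0$), and the self-adjointness of $\G{p}$ should be tied to the fact that, by construction \eqref{eq:def:extended:target}, the backward kernels $\bk{m}$ are exactly the conditional laws of $\x{1:p-1}$ given $\x{p}$ under $\targ{1:p}$ — this is where the specific form of the temporal embedding enters.
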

\begin{proof}
See \ref{proof:asymvar}.
\end{proof}

\section{Application to decomposable graphical models}\label{sec:application}
In this section we show in more detail how distributions of form \eqref{eq:graph:target}
appear in Bayesian analysis of \emph{graphical models}.
To rigorously describe the setting, we shall need some further notations.
Let $\{ (\ysp[m], \yfd[m]) \}_{m = 1}^p$, $p \in \nsetpos$, be a sequence
of measurable spaces and define $\ysp \eqdef \prod_{m = 1}^p \ysp[m]$
and $\yfd \eqdef \bigotimes_{m = 1}^p \yfd[m]$. Let
$\Y = (\Y[1], \ldots, \Y[p]) : \Omega \rightarrow \ysp$ be a random element.
We consider a fully dominated model where the distribution of $\Y$ has a
density $\dens$ on $\ysp$ with respect to some reference measure
$\refm \eqdef  \bigotimes_{m = 1}^p \refm_m$ on $(\ysp, \yfd)$, where each
$\refm_m$ belongs to  $\meas{\yfd[m]}$. For some subset $\{a_1, \ldots, a_m \}
\subseteq \intvect{1}{p}$ with $a_1 \leq \ldots \leq a_m$, we let $\Y[A] \eqdef
(\Y[a_1], \ldots, \Y[a_m])$ and define $\ysp[A] = \prod_{\ell = 1}^m \ysp[a_\ell]$
and $\yfd[A] = \bigotimes_{\ell = 1}^m \yfd[a_\ell]$. By slight abuse of notation,
we denote by $\dens(\y[A])$ the marginal density of $\Y[A]$ with respect to
$\nu_A \eqdef \bigotimes_{\ell = 1}^m \refm_{a_\ell}$. For disjoint subsets
$A$, $B$, and $S$ of $\intvect{1}{p}$, we say, following \cite{lauritzen1996},
that $\Y[A]$ and $\Y[B]$ are \emph{conditionally independent given} $\Y[S]$,
denoted $\Y[A] \perp \Y[B] \cond \Y[S]$, if it holds that
\begin{equation} \label{eq:cond:indep}
	f(\y[A \cup B] \cond \y[S]) = \dens(\y[A] \cond \y[S]) \dens(\y[B] \cond \y[S]),
	\quad \mbox{for all } \y[A] \in \Y[A], \ \y[B] \in \Y[B], \ \y[S] \in \Y[S],
\end{equation}
where the conditional densities are defined as $\dens(\y[A] \cond \y[S])
\eqdef \dens(\y[A \cup S]) / f(\y[S])$. The distribution of $\Y$ is said to
be \emph{globally Markov} w.r.t. the undirected graph $G = (V, E)$, with
$\graphnodeset = \intvect{1}{p}$ and $E \subseteq V \times V$, if for disjoint
subsets $A$, $B$, and $S$ of $V$ it holds that
$$
	A \perp_G B \cond S \Rightarrow \Y[A] \perp \Y[B] \mid \Y[S].
$$
We call the distribution governed by $\dens$ a \emph{decomposable model}
if it is globally Markov w.r.t. a decomposable graph. Then, by repeated use of
\eqref{eq:cond:indep}, it is easily shown that the density of a decomposable model
satisfies the \CSF-type identity
\begin{equation} \label{eq:DM:fixed:G}
	\dens(\y) = \frac{\prod_{\cl \in \clset{\graph}} \dens(\y[\cl])}
	{\prod_{S \in \sepset{\graph}} \dens(\y[S])},
\end{equation}
where we, in order to justify the notation $\y[\cl]$ and $\y[S]$, by slight abuse
of notation identify the cliques $\cl$ and the separators $S$ (which are complete
graphs) with the corresponding subsets of $\graphnodeset$.

In the following we consider the dependence structure $\graph$ as \emph{unknown},
and take a Bayesian approach to the estimation of the same on the basis of a
given, fixed data record $\y \in \ysp$. For this purpose, we assign a prior distribution
\begin{equation} \label{eq:prior:CSF:form}
	\prior(\rmd \graph) \eqdef \frac{\priorfun^\star(\rmd \graph)}
	{\priorfun^\star \1_{\graphsp}}
\end{equation}
in $\probmeas{\graphfd}$ to $\graph$, where
$$
	\priorfun^\star(\rmd \graph)
	\eqdef \priorfun |_{\graphsp}(\graph) \, \cm{\rmd \graph}
$$
and $\priorfun : \allgr \rightarrow \rsetpos$ is a function satisfying the
\CSF~in Definition~\ref{def:CFS}. For instance, in the completely uninformative
case, $\priorfun \equiv 1$; in the presence of prior information concerning the
maximal clique size of the underlying graph, one may let $\priorfun(\graph)
= \1 {\{\vee_{\cl \in \clset{\graph}} |\cl| \leq M\}}$ for some $M \in \nset$ controlling
the sizes of the cliques. In both cases, the \CSF~is immediately checked, see e.g \cite{bornn2011bayesian}.
We let the same symbol $\prior$ denote also the corresponding probability function.

In this Bayesian setting, focus is set on the \emph{posterior} distribution
$\graphtarg$ of the graph $\graph$ given the available data $\y$, which is,
by Bayes' formula, obtained via \eqref{eq:graph:target} with
$\ungraphtarg$ induced by
$$
	\graphmap(\graph) = \frac{\prod_{\cl \in \clset{\graph}} \dens(\y[\cl]) \priorfun(\graph[\cl])}
	{\prod_{S \in \sepset{\graph}} \dens(\y[S]) \priorfun(\graph[S])}, \quad \graph \in \allgr.
$$
The problem of computing the posterior may consequently be perfectly
cast into the setting of Section~\ref{sec:non-temp:FK:flows}.

The model will in general comprise additional unknown parameters
collected in a vector $\param$,  which is assumed to belong to some
measurable parameter space $(\paramsp_{\graph}, \paramfd_{\graph})$
depending on the graph $\graph$. We add $\param$ and $\graph$ to the
notation of the likelihood, which is assumed to be of form
\begin{equation} \label{eq:likelihood:full:model}
	\dens(\y \cond \param, \graph)
	= \frac{\prod_{\cl \in \clset{\graph}} \dens(\y[\cl] \cond \param[\cl])}
	{\prod_{S \in \sepset{\graph}} \dens(\y[S]\cond \param[S])}.
\end{equation}
Our Bayesian approach calls for a prior also on
$\param = \{ \param[\cl] , \param[S] : \cl \in \clset{\graph}, S \in \sepset{\graph} \}$,
and we will always assume that this is \emph{hyper Markov} w.r.t. the underlying
graph $\graph$. More specifically, we assume that the conditional distribution of
$\param$ given $\graph$ has a density w.r.t. some reference measure, denoted
$\rmd \param$ for simplicity, on $(\paramsp, \paramfd)$. This density is assumed
to be of form
\begin{equation} \label{eq:hyper:Markov:form}
	\prior(\param \cond \graph \hypcond \hyperparamletter)
	= \frac{\prod_{\cl \in \clset{\graph}} \prior(\param[\cl] \hypcond \hyperparam{\cl})}
	{\prod_{S \in \sepset{\graph}} \prior(\hyperparam{S} \hypcond \vartheta_S)},
\end{equation}
where $\hyperparamletter = \{ \hyperparam{\cl} , \hyperparam{S} : \cl \in \clset{\graph},
S \in \sepset{\graph} \}$ is a set of hyperparameters and each factor
$\prior(\param[\cl] \hypcond \hyperparam{\cl})$ (and $\prior(\hyperparam{S} \hypcond \vartheta_S)$)
is a probability density $\prior(\param[\cl] \hypcond \hyperparam{\cl})
= z(\param[\cl] \hypcond \hyperparam{\cl}) / I(\hyperparam{\cl})$ with
$I(\hyperparam{\cl}) = \int z(\param[\cl] \hypcond \hyperparam{\cl}) \, \rmd \param[\cl]$
being a normalising constant.

In the case where each $\prior(\param[\cl] \hypcond \hyperparam{\cl})$
is a \emph{conjugate prior} for the corresponding likelihood factor
$f(\y[\cl] \cond \param[\cl])$ it holds that
\begin{equation} \label{eq:conjugacy}
	\dens(\y[\cl] \cond \param[\cl]) z(\param[\cl] ; \hyperparam{\cl})
	= c^{|\cl|} z(\param[\cl] ; \hyperparam[{\y[\cl]}]{\cl}),
\end{equation}
for some updated hyperparameter $\hyperparam[{\y[\cl]}]{\cl}$ and
some constant $c > 0$. If the normalising constants $I(\hyperparam{\cl})$
are tractable, we may marginalise out the parameter and consider directly
the posterior of $\graph$ given data $\y$. Indeed, since for all
hyperparameters,
$$
	\int \frac{\prod_{\cl \in \clset{\graph}} z(\param[\cl] \hypcond \hyperparam{\cl})}
	{\prod_{S \in \sepset{\graph}} z(\hyperparam{S} \hypcond \vartheta_S)}
	\, \rmd \theta = \frac{\prod_{\cl \in \clset{\graph}} I(\hyperparam{\cl})}
	{\prod_{S \in \sepset{\graph}} I(\hyperparam{S})},
$$
the marginalised likelihood is obtained as
\[
	\begin{split}
		\dens(\y \cond \graph)
		&= \int \frac{\prod_{\cl \in \clset{\graph}} \dens(\y[\cl] \cond \param[\cl])
		\prior(\param[\cl] \hypcond \hyperparam{\cl})}
		{\prod_{S \in \sepset{\graph}} \dens(\y[S] \cond \param[S])
		\prior(\param[S] \hypcond \hyperparam{S})} \, \rmd \param \\
		&= c^p \int \frac{\prod_{\cl \in \clset{\graph}}
		z(\param[\cl] \hypcond \hyperparam[{\y[\cl]}]{\cl}) / I(\hyperparam{\cl})}
		{\prod_{S \in \sepset{\graph}} z(\param[S] \hypcond \hyperparam[{\y[S]}]{S})
		/ I(\hyperparam{S})} \, \rmd \param \\
		&= c^p \frac{\prod_{\cl \in \clset{\graph}} I(\hyperparam[{\y[\cl]}]{\cl})
		/ I(\hyperparam{\cl})}{\prod_{S \in \sepset{\graph}}  I(\hyperparam[{\y[S]}]{S})
		/ I(\hyperparam{S})},
	\end{split}
\]
Thus, by Bayes' formula, the marginal posterior $\graphtarg$ of $\graph$
given the available data $\y$ can be expressed by \eqref{eq:graph:target}
with $\ungraphtarg$ induced by
\begin{equation} \label{eq:gamma:conjugate:priors}
	\graphmap(\graph) = \frac{\prod_{\cl \in \clset{\graph}}
	\priorfun(\graph[\cl]) I(\hyperparam[{\y[\cl]}]{\cl}) / I(\hyperparam{\cl})}
	{\prod_{S \in \sepset{\graph}} \priorfun(\graph[S]) I(\hyperparam[{\y[S]}]{S})
	/ I(\hyperparam{S})}, \quad \graph \in \allgr.
\end{equation}

\begin{example}[discrete log-linear models]\label{ex:multinomial:graphical:models}
    Let $V$ be a set of $p$ criteria defining a contingency table.
    Without loss of generality, we let $\graphnodeset = \intvect{1}{p}$ and
    denote the table by $\ct = \ct[1] \times \cdots \times \ct[p]$, where each
    $\ct[m]$ is a finite set. An element $i \in \ct$ is referred to as a \emph{cell}.
    In this setting, $\I = (\I_1, \ldots, \I_p)$ is a discrete-valued random
    vector whose distribution $\param$ is assumed to be globally Markov
    w.r.t. some decomposable graph $\graph = (\graphnodeset, E)$ with
    $E \subseteq \graphnodeset \times \graphnodeset$, i.e.,
    \begin{align}
        \paramd[i] = \prob \left( \I = i \right) = \frac{\prod_{\cl \in \clset{\graph}}
        \paramd[i_\cl]}{\prod_{S \in \sepset{\graph}} \paramd[i_S]}, \quad i \in \ct.
        \label{eq:hyper_markov_param}
    \end{align}
    The vector $\I$ may, e.g., characterise a randomly selected individual
    w.r.t. the table $\ct$. Given $\graph$, the parameter space of the model
    is determined by the clique and separator marginal probability tables
    $\paramd[i_\cl]$ and $\paramd[i_S]$; more specifically,
    \begin{equation*}
        \Theta_{\graph} = \left \{ \paramd(i_\cl) \in (0,1), \paramd(i_S) \in (0,1)
        : i \in \I, \cl \in \clset{\graph}, S \in \sepset{\graph},
         \sum_{i \in \ct} \paramd[i] = 1 \right \}.
    \end{equation*}
    Let $Y$ be a collection of $n \in \nset$ i.i.d. observations from the model;
    e.g., $Y$ is an $n \times p$ matrix where each row corresponds to an
    observation of $\I$. Then also $Y$ forms a DGM
    with state space $\ysp = \ct[1]^n \times \cdots \times \ct[p]^n$ and
    probability function $\dens(y \cond \param, \graph)$ given by
    \eqref{eq:likelihood:full:model} with
    $$
    \dens(y_\cl \cond \param[\cl]) = \prod_{i_\cl \in \ct[\cl]} \paramd[i_\cl]^{n(i_\cl)}
    $$
    (and similarly for $\dens(y_S \cond \param[S])$), where $\ct[\cl] = \prod_{m \in \cl} \ct[m]$,
    $\param[\cl] \eqdef \{\param(i_\cl) \}_{i_\cl \in \ct[\cl]}$, and $n(i_\cl)$ counts the
    number of elements of $y_\cl$ belonging to the marginal cell $i_\cl$.

    The problem of estimating the dependence structure $\graph$ is complicated
    further by the fact that also the probabilities $\param$ are unknown in general.
    When assigning a prior $\prior(\param \cond \graph \hypcond \hyperparamletter)$
    to the latter conditionally on the former, we follow \citet{dawid1993} and let the prior
    $\prior(\param[\cl] \hypcond \hyperparam{\cl})$ of each  $\param[\cl]$ be a standard
    \emph{Dirichlet distribution}, $\Dir(\hyperparam{\cl})$, where $\hyperparam{\cl} =
    \{\hyperparam{\cl}(i_\cl) \}_{i_\cl \in \ct[\cl]}$ are hyper parameters often referred to
    as \emph{pseudo counts}. Under the assumption that the collection
    $\{\prior(\param[\cl] \hypcond \hyperparam{\cl}) \}_{\cl \in \clset{\graph}}$ is
    pairwise \emph{hyper consistent} in the sense that for all $(\cl, \cl') \in \clset{\graph}^2$
    such that $\cl \cap \cl' \neq \varnothing$, $\prior(\param[\cl] \hypcond \hyperparam{\cl})$
    and $\prior(\param[\cl'] \hypcond \hyperparam{\cl'})$ induce the same law on
    $\param[\cl \cap \cl']$, which in this case is implied by the condition
    \begin{align*}
        \hyperparam{\cl}(i_{\cl \cap \cl'})
        &\eqdef \sum_{j_{\cl} \in \ct[\cl] : j_{\cl \cap \cl'}
        = i_{\cl \cap \cl'}} \hyperparam{\cl}(j_\cl) \\
        &= \sum_{j_{\cl'} \in \ct[\cl'] : j_{\cl \cap \cl'}
        = i_{\cl \cap \cl'}} \hyperparam{\cl'}(j_{\cl'})
        = \hyperparam{\cl'}(i_{\cl \cap \cl'}),
    \end{align*}
    \cite[Theorem~3.9]{dawid1993} implies the existence of a unique \emph{hyper Dirichlet}
    law of the form  \eqref{eq:hyper:Markov:form}. Thus, $z(\param[\cl] \hypcond \hyperparam{\cl})
    = \prod_{i_\cl \in \ct[\cl]} {\paramd[i_\cl]}^{\hyperparam{\cl}(i_\cl)}$, $I(\hyperparam{\cl})
    = B(\hyperparam{\cl}) \eqdef \prod_{i_\cl \in \ct[\cl]} \Gamma(\hyperparam{\cl}(i_\cl))
    / \Gamma(\sum_{i_\cl \in \ct[\cl]} \hyperparam{\cl}(i_\cl))$ (the beta function),
    and the conjugacy \eqref{eq:conjugacy} holds with $c = 1$ and $\hyperparam[{\y[\cl]}]{\cl}
    = \{ \hyperparam[i_\cl]{\cl}(\y[\cl]) \}_{i_\cl \in \ct[\cl]}$, where $\hyperparam[i_\cl]{\cl}(\y[\cl])
    = \hyperparam{\cl}(i_\cl) + n(i_\cl)$. Then, putting a prior of form \eqref{eq:prior:CSF:form}
    on the graph,  \eqref{eq:gamma:conjugate:priors} implies that the marginal posterior of
    $\graph$ given data $\y$ is obtained through \eqref{eq:graph:target} with $\ungraphtarg$
    induced by
    $$
        \graphmap(\graph) = \frac{\prod_{\cl \in \clset{\graph}}
        \priorfun(\cl) B(\hyperparam[{\y[\cl]}]{\cl}) / B(\hyperparam{\cl})}
        {\prod_{S \in \sepset{\graph}} \priorfun(S) B(\hyperparam[{\y[S]}]{S})
        / B(\hyperparam{S})}, \quad \graph \in \allgr.
    $$
\end{example}

\begin{example}[Gaussian graphical models]
    \label{ex:Gaussian:graphical:graph:models}
    A $p$-dimensional Gaussian random vector forms a
    \emph{Gaussian graphical model} if it is globally Markov w.r.t.
    some graph $\graph = (\graphnodeset, E)$ with $\graphnodeset
    = \intvect{1}{p}$ and $E \subseteq \graphnodeset \times \graphnodeset$.
    In the following we assume that the model has zero mean (for simplicity)
    and is, given $\graph$, parameterised by its \emph{precision}
    (inverse covariance) \emph{matrix} belonging to the set
    $$
        \Theta_{\graph} = \{\param \in \precmatset{p} :
        \param[ij] = 0 \mbox{ for all } (i,j) \notin E \},
    $$
    where $ \precmatset{p}$ denotes the space of $p \times p$ positive definite matrices.
    It is well known that in this model, a zero in the precision matrix, $\param[ij] = 0$,
    is equivalent to conditional independence of the $i$th and $j$th variables given
    the rest of the variables, see \citet{10.2307/2241271}. In addition, when $\graph$ is
    decomposable, a model with $\theta \in \Theta_{\graph}$ is globally Markov w.r.t.
    $\graph$. In the following, for any matrix $p \times p$ matrix $M$ and
    $A \subseteq \intvect{1}{p}$, denote by $M_A$ the $|A|\times |A|$ matrix
    obtained by extracting the elements $(M_{ij})_{(i, j) \in A^2}$ from $M$.
    Suppose that $\graph$ is decomposable and that are we have access to
    $n$ independent observations from the model. The observations are stored
    in an $n \times p$ data matrix $\Y$, whose likelihood $\dens(\y \cond \param, \graph)$
    is, as a consequence of the global Markov property,
    given by \eqref{eq:likelihood:full:model} with
    $$
        \dens(\y[\cl] \cond \param[\cl])
        = \frac{1}{(2 \pi)^{|\cl|}}|\param[\cl]|^{n / 2}
        \exp\left( - \mathrm{tr}(\param[\cl] s_{\cl}) / 2 \right),
    $$
    (and similarly for $\dens(\y[S] \cond \param[S])$) where $s = y^\trans y$,
    $|\cl|$ is the cardinality of $\cl$, and $|\param[\cl]|$ is the determinant
    of $\param[\cl]$.

    For Bayesian inference on $\param$, we follow~\citet{dawid1993} and furnish, given $\graph$, $\param$ with a
    \emph{hyper Wishart} prior $\prior(\param \cond \graph)$ of form~\eqref{eq:hyper:Markov:form}, with each
    $\prior(\param[\cl] \hypcond \hyperparam{\cl})$
    being proportional to
    $$
        z(\param[\cl] \hypcond \hyperparam{\cl})
        = |\param[\cl]|^{\beta_\cl}
        \exp \left(- \mathrm{tr}(\param[\cl] \scalmat[\cl]) / 2\right),
    $$
    where $\beta_\cl \eqdef (\df + |\cl| - 1) / 2$, $\hyperparam{\cl}
    = (\df, \scalmat[\cl])$ with $\scalmat \in \precmatset{p}$ being a
    scale matrix and $\df > \vee_{\cl \in \clset{\graph}} |\cl| - 1$
    the number of degrees of freedom, and normalising constant
    $$
        I(\hyperparam{\cl})
        = 2^{\df |\cl| / 2} \frac{\Gamma_{|\cl|}(\beta_\cl)}
        {|\scalmat[\cl]|^{\beta_\cl}},
    $$
    where $\Gamma_p$ denotes the multivariate gamma function.
    Since all hyperparameters $\hyperparam{\cl}$ are extracted from
    the \emph{same} scale matrix $\scalmat$, the collection of priors
    $\{ \prior(\param[\cl] \hypcond \hyperparam{\cl}) \}_{\cl \in \clset{\graph}}$
    is automatically pairwise hyper consistent, and the existence of the
    (unique) hyper Wishart prior is guaranteed by \citet[Theorem~3.9]{dawid1993}.
    As
    $$
        \dens(\y[\cl] \cond \param[\cl]) z(\param[\cl] \hypcond \hyperparam{\cl})
        = \frac{1}{(2 \pi)^{|\cl|}}|\param[\cl]|^{\alpha_\cl}
        \exp \left( - \mathrm{tr}\{\param[\cl] (s_{\cl} + \scalmat[\cl]) \} / 2 \right),
    $$
    where $\alpha_\cl \eqdef (\df + n + |\cl| - 1) / 2$, we conclude that the
    conjugacy condition \eqref{eq:conjugacy} holds for $c = 1/(2 \pi)$ and
    $\hyperparam[{\y[\cl]}]{\cl} = (\df[\cl]', \scalmat[\cl]')$ with $\df[\cl]' = \df + n$
    and $\scalmat[\cl]' = s_{\cl} + \scalmat[\cl]$ (and similarly for factors
    corresponding to separators). Consequently, assigning also a prior of
    form \eqref{eq:prior:CSF:form} to the graph, \eqref{eq:gamma:conjugate:priors}
    implies that the marginal posterior of $\graph$ given data $\y$ is, in this case,
    obtained through \eqref{eq:graph:target} with $\ungraphtarg$ induced by
    $$
        \graphmap(\graph) = \frac{\prod_{\cl \in \clset{\graph}} \priorfun(\cl) \rho(\cl)}
        {\prod_{S \in \sepset{\graph}} \priorfun(S) \rho(S)}, \quad \graph \in \allgr,
    $$
    with
    $$
        \rho(\cl) \eqdef
        \frac{|\scalmat[\cl]|^{\alpha_\cl}}{|\scalmat[\cl] + s_\cl|^{\beta_\cl}}
        \frac{\Gamma_{|\cl|}(\alpha_\cl)}{\Gamma_{|\cl|}(\beta_\cl)},
    $$
    and $\rho(S)$ defined analogously.
\end{example}

\section{Numerical study}\label{sec:numerics}
\newcommand{\subtreecont}{\alpha}
\newcommand{\subtreenonempty}{\beta}

In this section we investigate numerically the performance of the suggested PG algorithm for three example datasets.
The first example treats the classical Czech autoworkers dataset found in e.g. \cite{10.2307/2336086}.
The second one considers simulated data generated from the discrete $\p=15$ nodes structure, introduced in \cite{2005}.
The third example investigates a continuous dataset simulated from a Gaussian DGM of dimensionality $\p=50$ with a time-varying dependence structure.

The proposal and backward kernels $\{\prop{m}\}_{m=1}^{\p-1}$ and $\{\bk{m}\}_{m=1}^{\p-1}$ are given by the CTA and its reversed version, respectively, provided in Section 3 and 4 of the companion paper \cite{cta}.
The transition kernels $\{\combkernel{m}\}_{m=1}^{\p-1}$ introduced in \eqref{eq:def:comb:kernel:path} are defined by selecting $s^*$ uniformly at random from the set $\{s \in \intvect{1}{p} : \min_{s' \in \comb{m}} |s - s'| \leq \delta\}$ as suggested in Section \ref{sec:non-temp:FK:flows}.

We assign the uniform prior for the graph structure in each of the examples.
The estimated graph posteriors are summarized in terms of marginal edge distributions presented as heatmaps, where the  probability of an edge $(a,b)$ is estimated according to (\ref{eq:mcmc_estimator2}) by letting $h(\Z{})=\1_{(a,b) \in E}$, where $E$ here denotes the edge set for $\trgr(\Z{})$. 

We study the number of edges in the graph (\emph{graph size}) in order to evaluate the mixing properties. 
Since in many practical situations the aim is to select one particular model that best represents the underlying dependence structure, we also present the maximum aposteriori (MAP) graph for each of the examples.

All the experiments were performed on the Tegnér cluster at PDC having one intel $2\times 12$ core Intel E5-2690v3 Haswell processor per node.
The Python program used to generate the examples is part of the \emph{trilearn} library available at \url{https://github.com/felixleopoldo/trilearn}.

\subsection{Czech autoworkers data}

This dataset, previously analyzed many times in the literature comprises 1841 men cross-classified with respect to six potential risk factors for coronary thrombosis randomly selected from a population of Czech autoworkers: ($Y_1$) smoking, ($Y_2$) strenuous mental work, ($Y_3$) strenuous physical work, ($Y_4$) systolic blood pressure, ($Y_5$) ratio of beta and alpha lipoproteins and ($Y_6$) family anamnesis of coronary heart disease.
In absence of any prior information we assume that the data are generated from the discrete log-linear DGM presented in Section 5.
Each of the $64$ cells in the contingency table is assigned a pseudo count of $1/64$, which in turn induces hyper parameters in the conjugate prior Hyp-Dir$(\hyperparamletter)$ defined by $\{\pi(\theta_Q;\hyperparamletter_Q)\}_{Q\in \mathcal Q(\graph)}$ where $\hyperparamletter_Q=\{\hyperparamletter_{Q(i_Q)}\}_{i\in \mathsf I_Q }$ and $\hyperparamletter_{Q(i_Q)} = |\mathsf I_{V\setminus Q}| / 64$.
This type of low dimensional model is suitable for evaluation purposes since it is possible to exactly compute the posterior distribution.
Specifically, the total number of decomposable graphs with six nodes is equal to 18154, allowing for full computation of the posterior distribution.

All the estimators are based on $\N=100$ particles and averaging is performed over $\Nmcmc=10000$ PG-runs according to equation (\ref{eq:mcmc_estimator2}).
Due to the absence of a time-dependent dynamic, we set the bandwidth $\delta$ to $\p$.

The heatmaps for the exact and estimated posterior distributions are displayed in Figure~\ref{fig:autoworkers}, along with the estimated auto-correlation.
A visual inspection of the marginal edge probabilities in the heatmaps indicates a good agreements between the distributions.
From the fast decay from one to zero in the auto-correlation plot we deduce that the PG-sampler exhibit very good mixing properties for this problem.

Table~\ref{tab:autoworkers} summarizes the edge sets for the top five graphs on both the exact and estimated posterior distribution along with their corresponding probabilities.
It is important to note that the top five graphs are exactly the same for these two distributions and the estimated probabilities are in a good agreement with the exact ones.
Our findings are also consistent with the results obtained by \cite{10.2307/25662199} and \cite{10.2307/2291017}.
Specifically, our top highest posterior probability graphs are the same as those identified by \cite{10.2307/25662199}, see Table~2, case $\alpha=1.0$ in that paper.

Finally, after evaluating a range of different combinations, we conclude that our results obtained in this example appear to be insensitive to the choice of CTA parameters $\alpha$ and $\beta$ for this small scale problem.

 \begin{figure}
    \centering
    \begin{minipage}[b]{0.33\textwidth}
        \includegraphics[width=\textwidth]{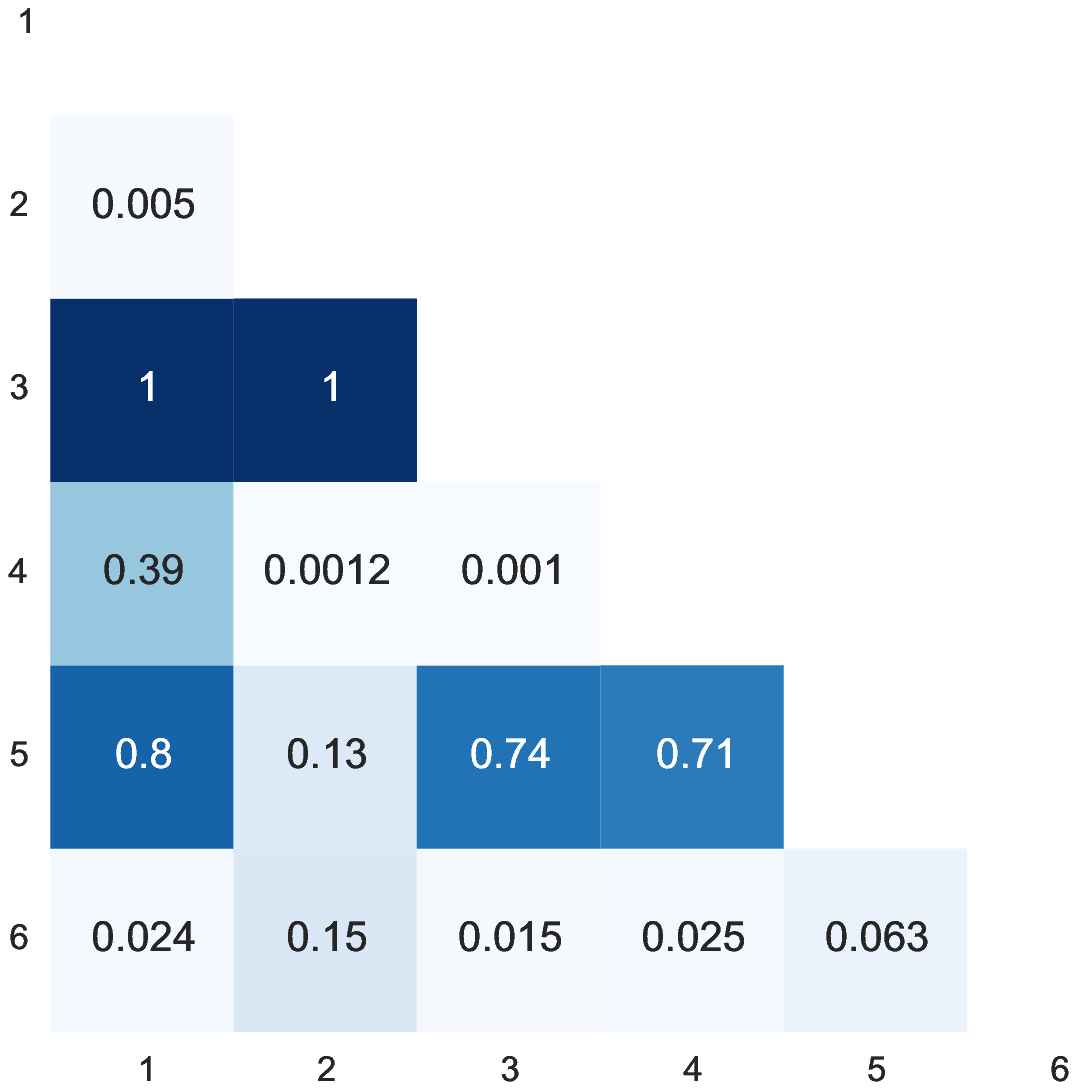}
    \end{minipage}
    ~
    \begin{minipage}[b]{0.33\textwidth}
        \includegraphics[width=\textwidth]{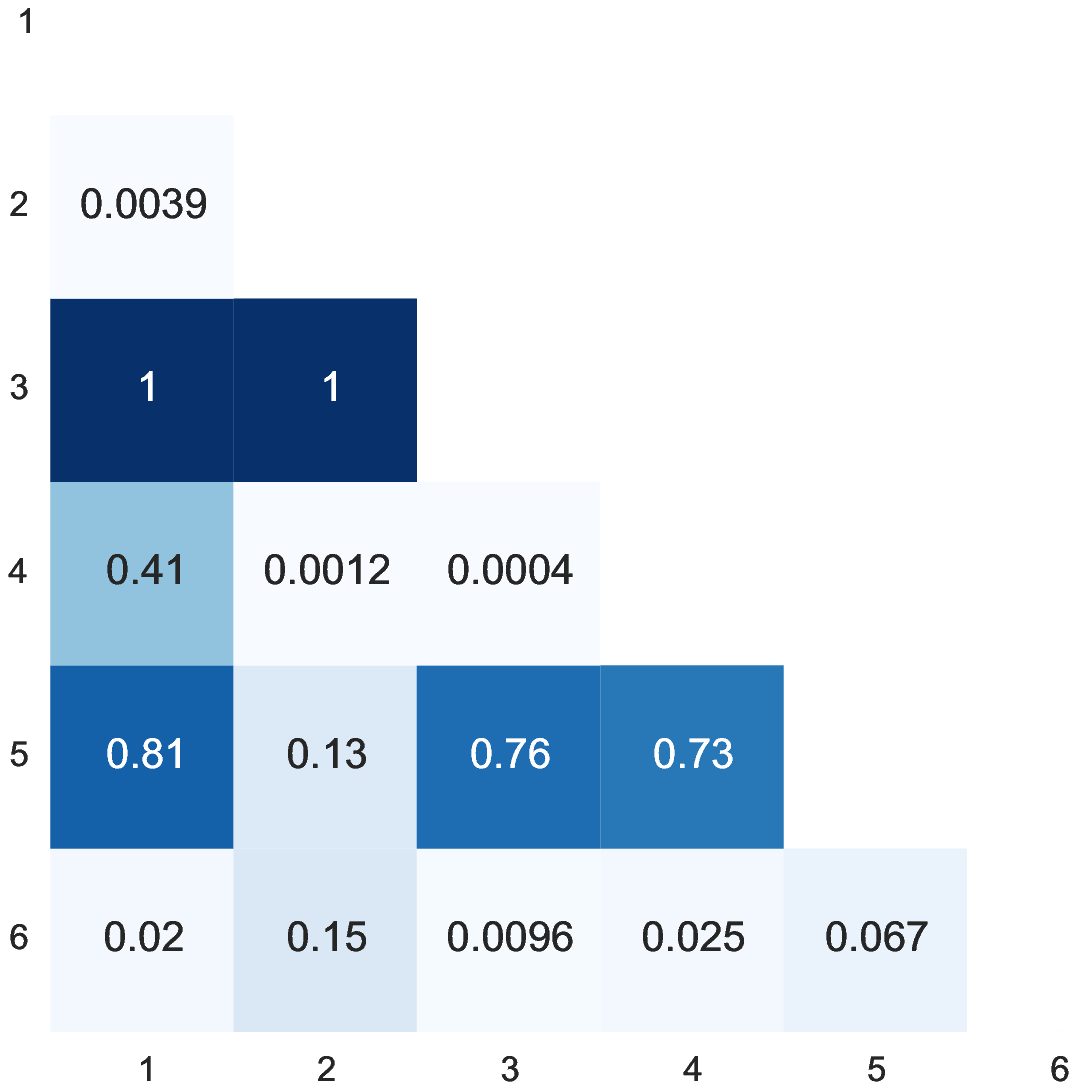}
    \end{minipage}
        ~
    \begin{minipage}[b]{0.33\textwidth}
        \includegraphics[width=\textwidth]{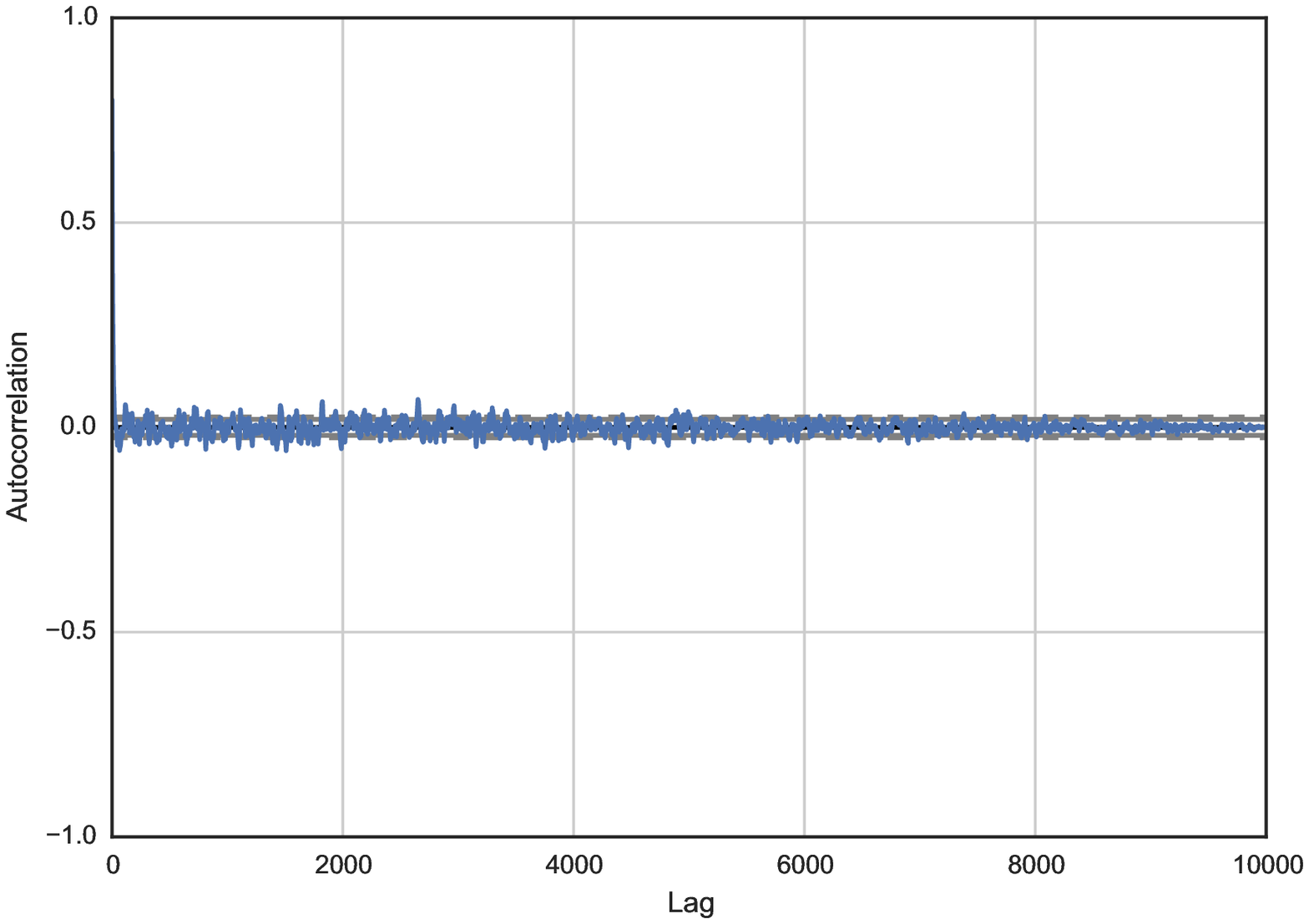}
    \end{minipage}
    \caption{True edge heatmap (left), estimated heatmap (middle) and auto-correlation of the number of edges in the trajectory of graph (right).}
    \label{fig:autoworkers}
\end{figure}

\begin{table}[h]
\begin{center}
    \begin{tabular}{ c | c | c  }
    Edge set & Exact & Estimated \\
    \hline
    $(1, 3), (1, 5), (2, 3), (3, 5), (4, 5)$ & 0.248 & 0.263 \\
    $(1, 3), (1, 4), (1, 5), (2, 3), (3, 5), (4, 5)$&0.104 & 0.115 \\
     $(1, 3), (1, 4), (1, 5), (2, 3), (3, 5)$& 0.101& 0.103\\
     $(1, 3), (2, 3), (2, 5), (4, 5)$& 0.059 & 0.062\\
     $(1, 3), (1, 5), (2, 3), (2, 6), (3, 5), (4, 5)$&0.051 & 0.051\\
    \end{tabular}
    \caption{The estimated graph probabilities are compared to the true posterior probabilities for the five graphs with the highest posterior probabilities.}
    \label{tab:autoworkers}
\end{center}
\end{table}

\subsection{Discrete data with $p=15$}
 \begin{figure}
    \centering
    \begin{minipage}[b]{0.4\textwidth}
        \includegraphics[width=\textwidth]{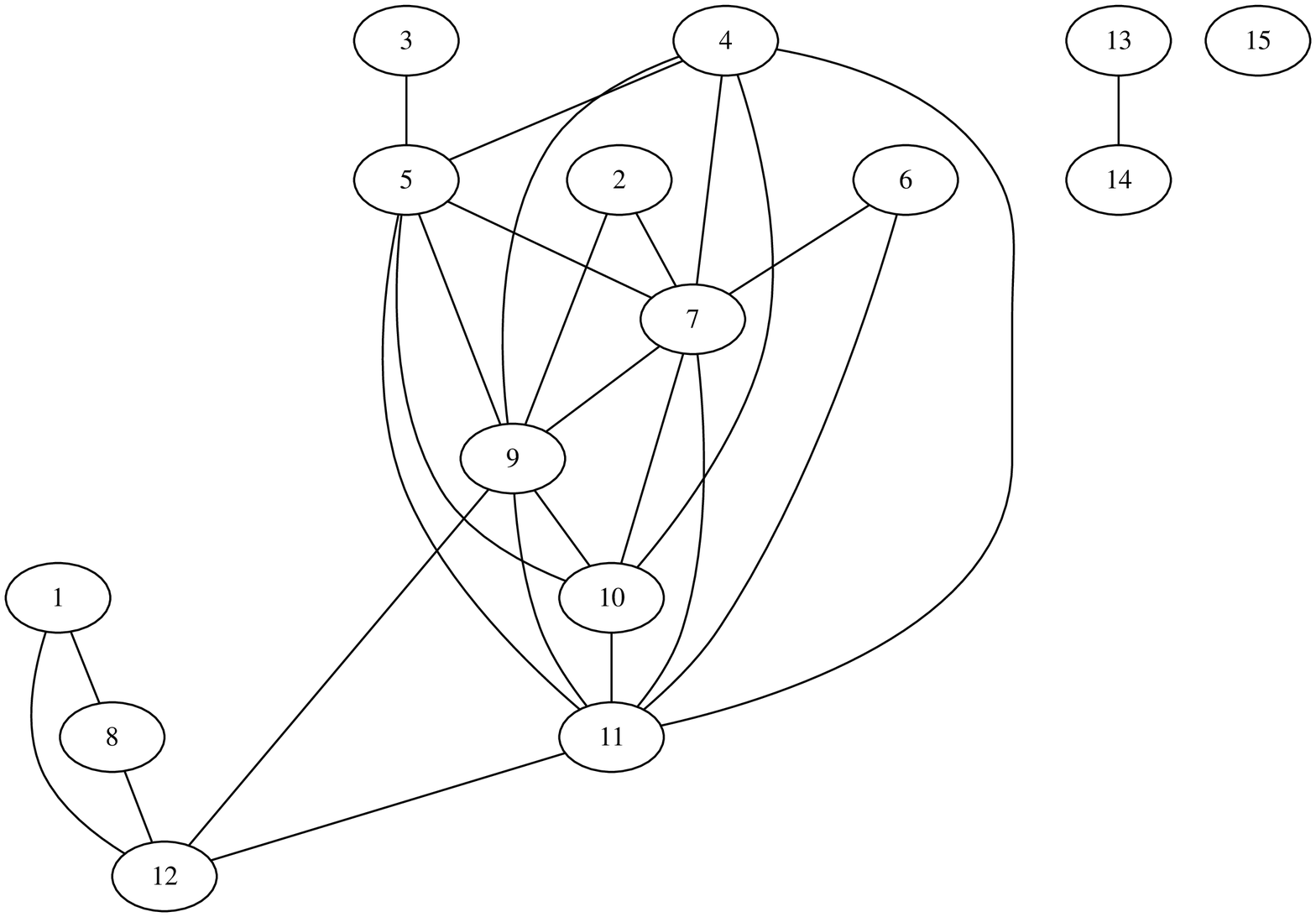}
    \end{minipage}
    ~
    \begin{minipage}[b]{0.4\textwidth}
        \includegraphics[width=\textwidth]{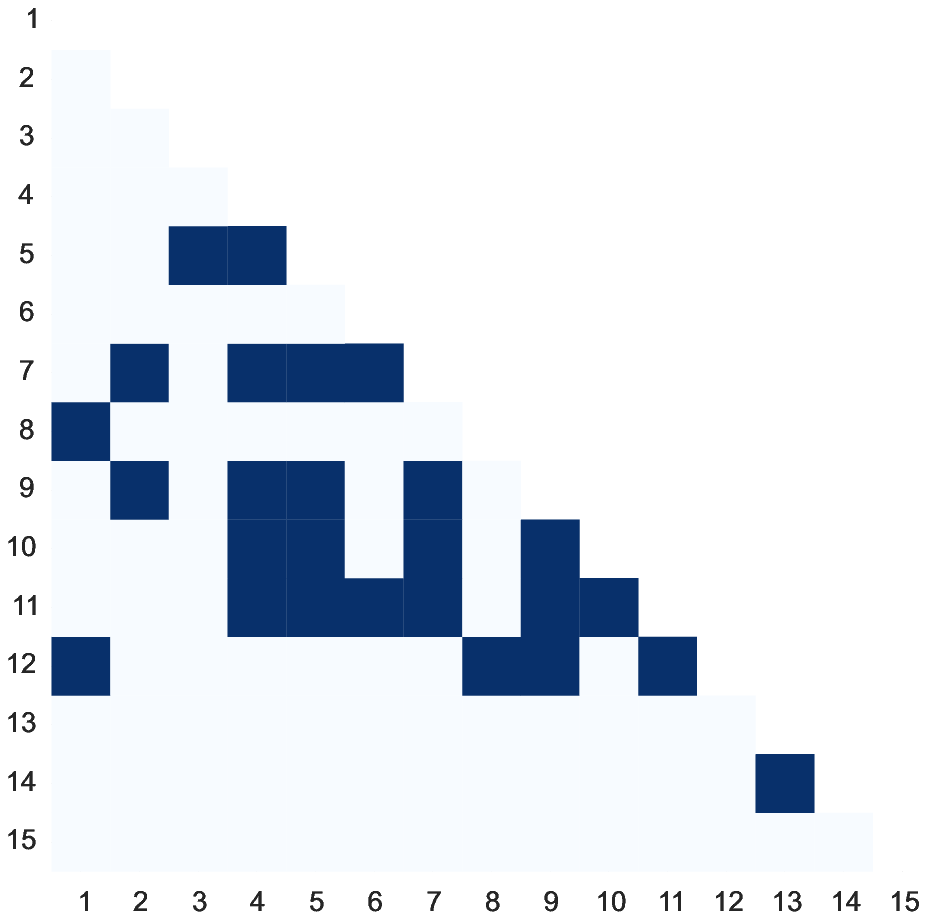}
    \end{minipage}
    \caption{The true underlying decomposable graph on $\p = 15$ nodes along with its adjacency matrix.}
    \label{fig:carvalho}
\end{figure}
In this example we study a discrete log-linear DGM with $\p=15$ nodes and the dependence structure displayed in Figure~\ref{fig:carvalho}, presented in \cite[Figure~1]{2005}.
The parameters were selected to satisfy (\ref{eq:hyper_markov_param}) thereby ensuring that the distribution $\paramsp_{\graph}$ specified in Example \ref{ex:multinomial:graphical:models} will be Markov with respect to $\graph$.
Analogously to the previous example, the we use the Hyp-Dir$(\hyperparamletter)$ prior and assign to each cells in the contingency table a pseudo count of $1/2^{15}$.
Due to the absence of any time interpretation of the model, the bandwidth parameter $\delta$ is selected as $\p$.
We used the CTA parameters $\subtreecont=0.2$ and $\subtreenonempty=0.8$, obtained as the parameter setting giving best mixing properties within all the possible combination on the grid $\subtreecont,\subtreenonempty=0.2,0.5,0.8$.

To evaluate how the estimation accuracy is affected by the number of particles, we sampled $n=1000$ data vectors and estimated both the graph posterior and the auto-correlation function with $\N=20$ and $\N=100$.
By comparing the true underlying graph in Figure~\ref{fig:carvalho} with the heatmaps and the MAP in Figure~\ref{fig:carvalho_est}, we observe that increasing $\N$ from $20$ to $100$ gives a slightly better agreement with the true adjacency matrix.
This effect can be further explained by the behavior of the estimated auto-correlation function; by increasing $\N$ a clear reduction of the auto-correlation can be noted.
Qualitatively we conclude that the mobility of the PG-sampler is improved when increasing $\N$.

 \begin{figure}
    \centering
    \begin{minipage}[b]{0.4\textwidth}
        \includegraphics[width=\textwidth]{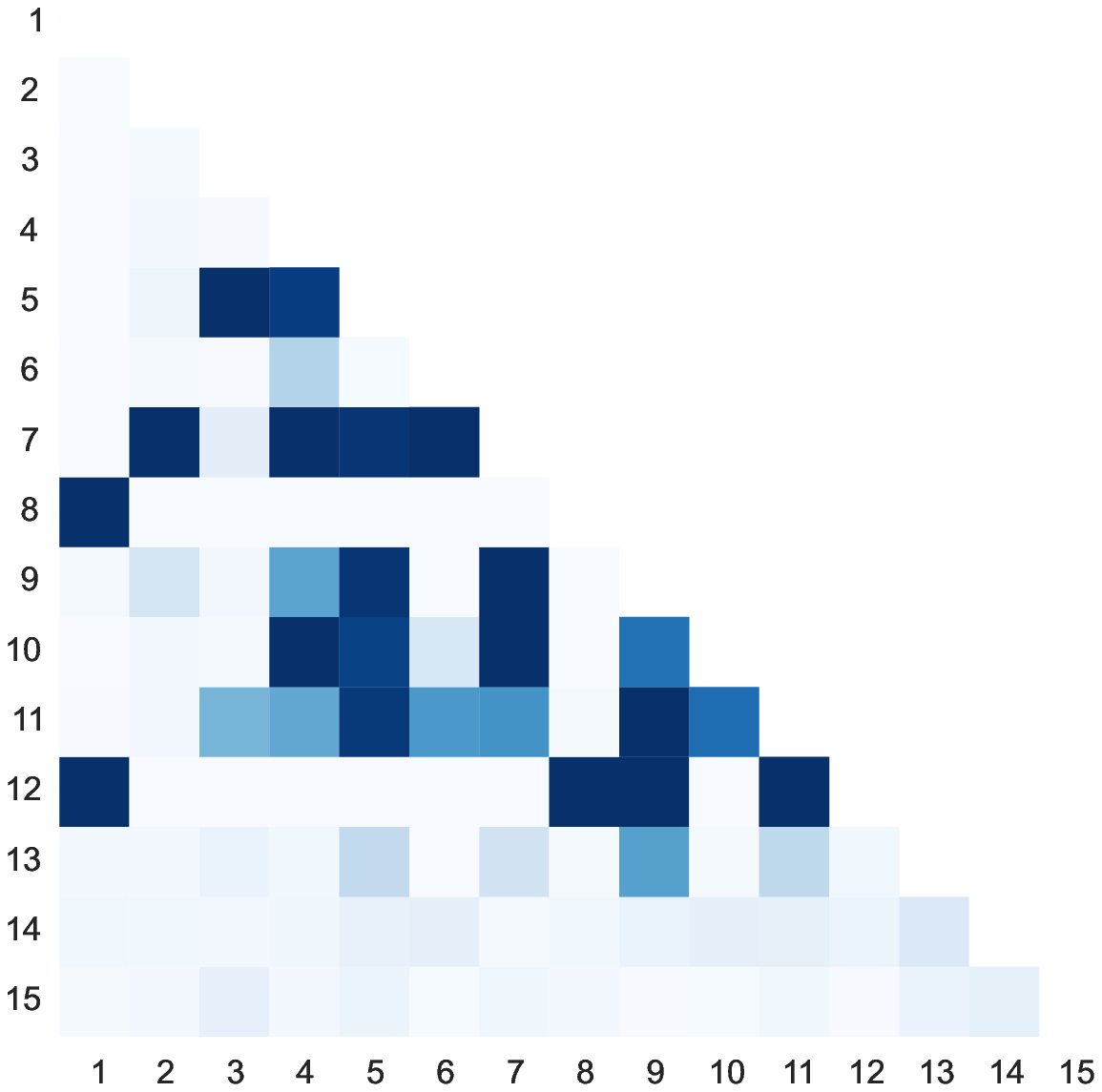}
    \end{minipage}
    ~
    \begin{minipage}[b]{0.45\textwidth}
        \includegraphics[width=\textwidth]{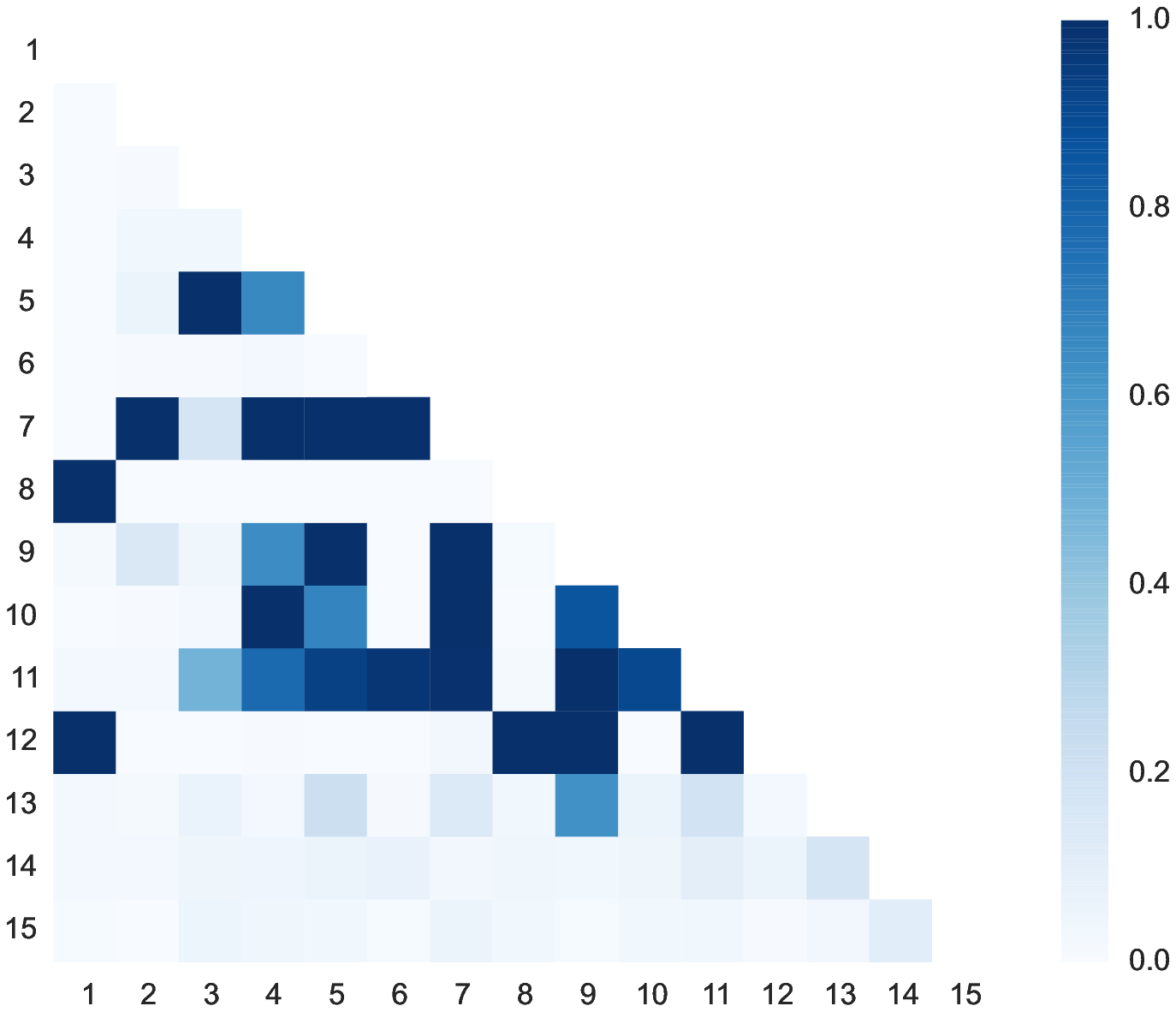}
    \end{minipage}

    \begin{minipage}[t]{0.4\textwidth}
        \includegraphics[width=\textwidth]{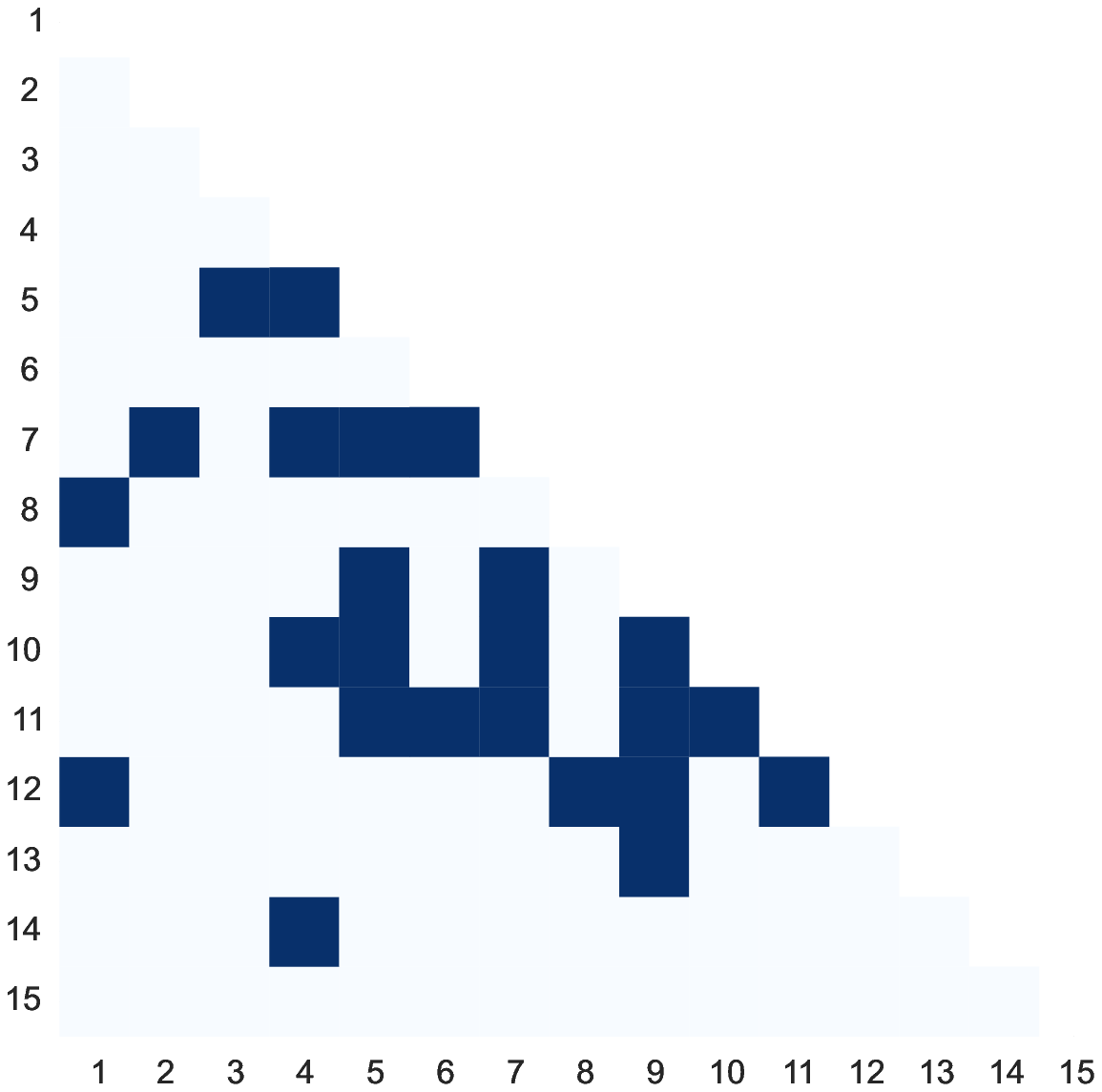}
    \end{minipage}
    ~
    \begin{minipage}[t]{0.45\textwidth}
        \includegraphics[width=\textwidth]{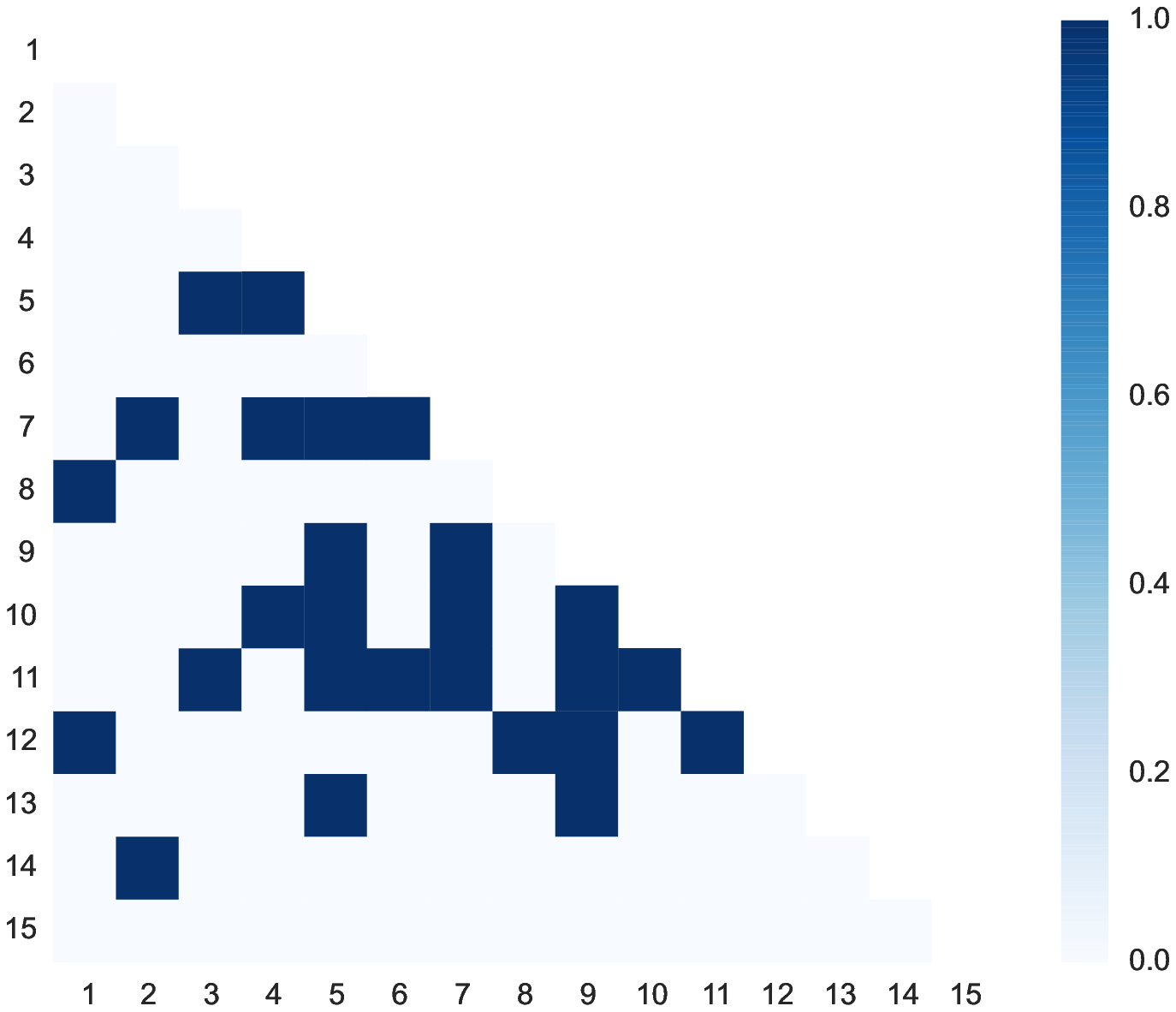}
    \end{minipage}

    \begin{minipage}[t]{0.45\textwidth}
        \includegraphics[width=\textwidth]{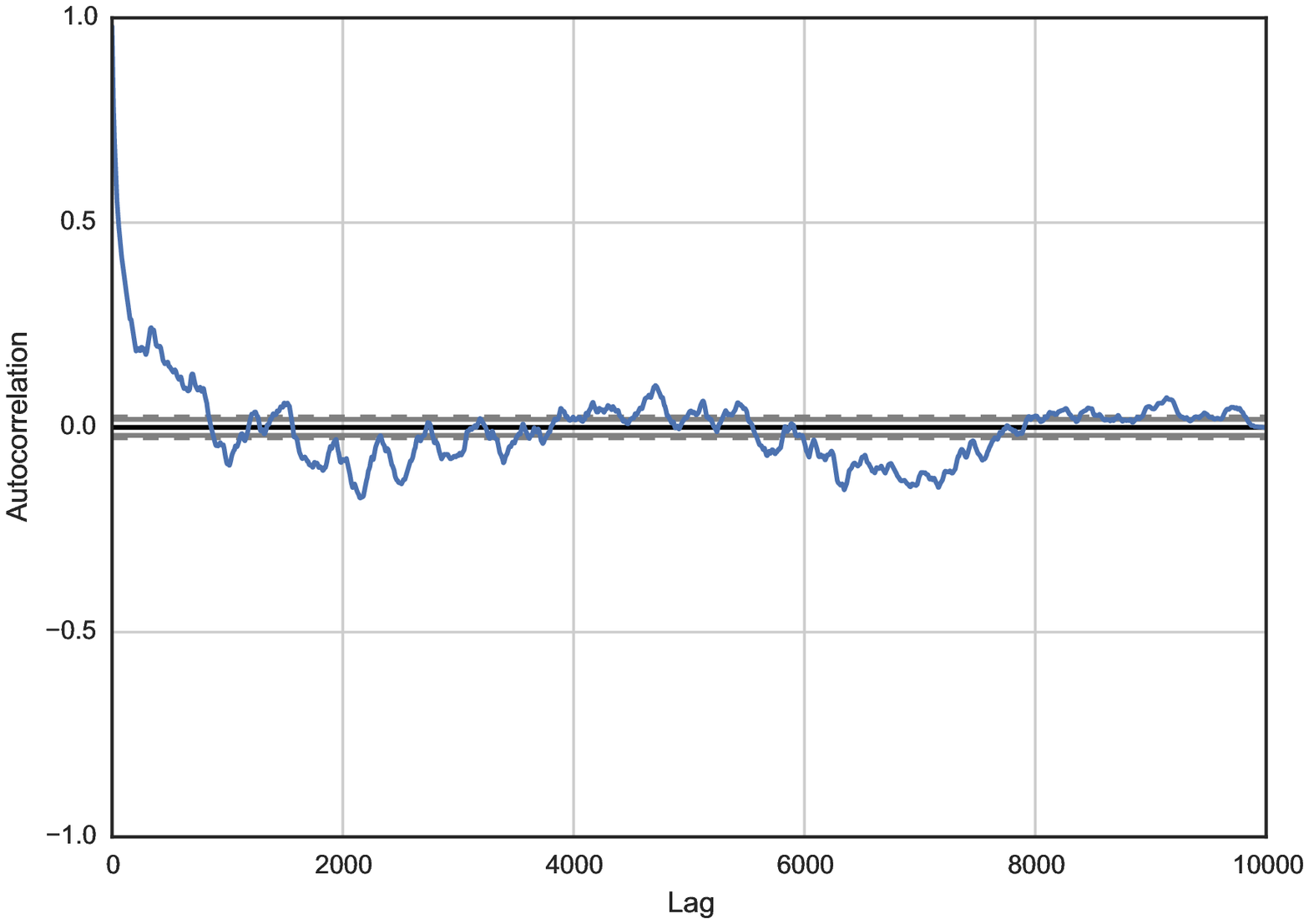}
    \end{minipage}
    ~
    \begin{minipage}[t]{0.45\textwidth}
        \includegraphics[width=\textwidth]{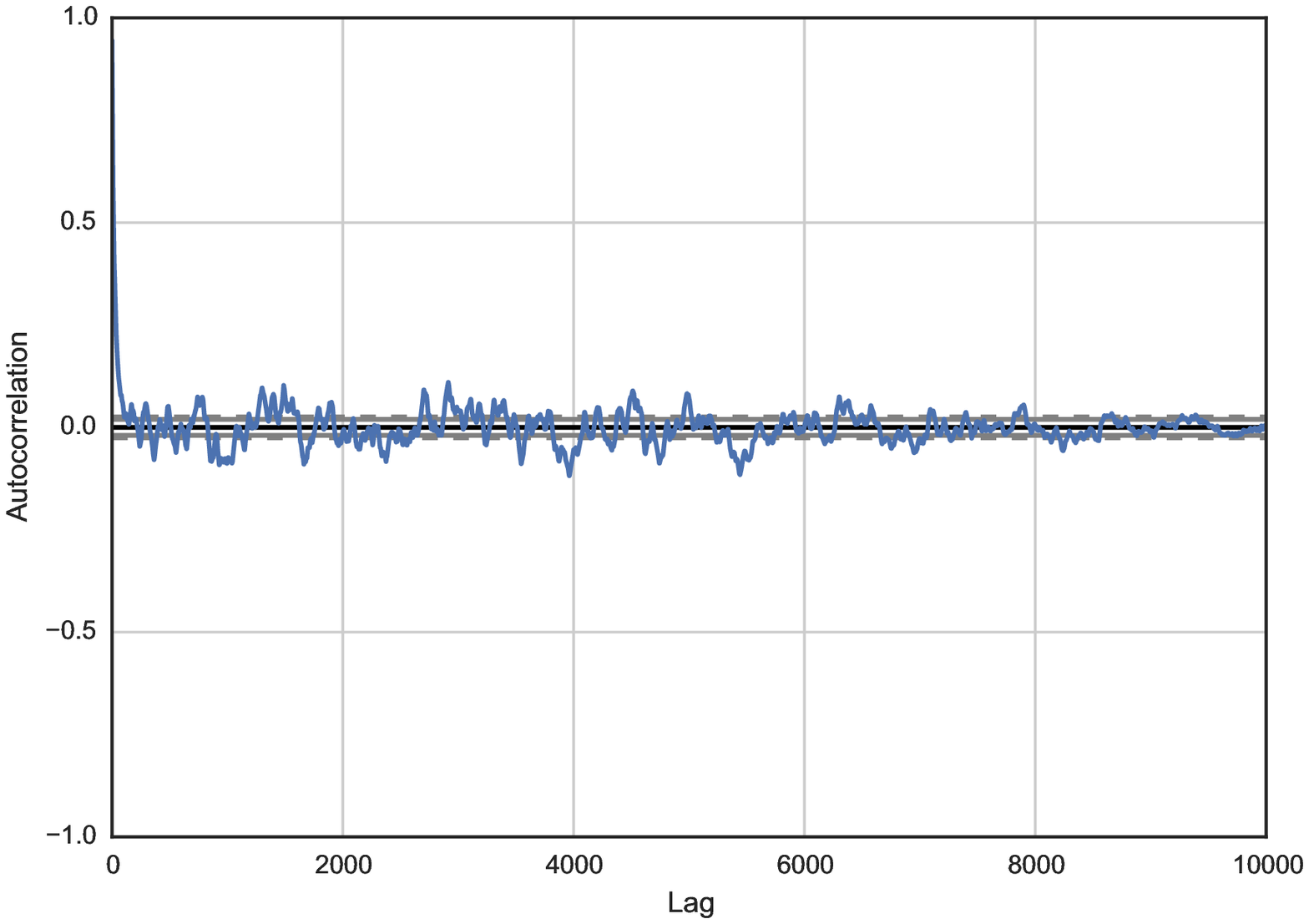}
    \end{minipage}

    \caption{Estimation of the graph posterior for the log-linear model with $\p=15$ and $n=100$. The CTA parameters are $\subtreecont=0.2,\, \subtreenonempty=0.8$ and $\delta=15$.
    The number of McMC sweeps $\Nmcmc$ is set to $10000$.
    The left and right panel correspond to $\N=20$ and $\N=100$, respectively. For both panels from top to bottom, the first figure presents the estimated edge heatmap, the estimated MAP graph and estimated auto-correlation of the number of edges in the graph.}
    \label{fig:carvalho_est}
\end{figure}

\subsection{Continuous data with temporal dependence}
\label{sec:time_dep}

In this example, we study a Gaussian DGM where a temporal interpretation of the underlying dependence structure is suitable.
The graph structure along with its adjacency matrix are displayed in Figure~\ref{fig:AR1-5} and can be interpreted as an AR-process with lag varying between 1 and 5.
The model to be considered is represented by a Gaussian distribution with zero mean and covariance matrix $\precmat^{-1}$ defined as
\begin{align*}
    (\precmat^{-1})_{ij} = \begin{cases}
        \sigma^2, &\text{ if } i=j\\
        \rho\sigma^2, &\text{ if } (i,j) \in \graph \\
    \end{cases}
\end{align*}
and $\precmat_{ij} =0$ if $(i,j) \notin \graph$.
This is a modification of the second order intra-class structure considered in \citet{Green01032013}, where the bandwidth is varying.
We have sampled $n=100$ data vectors from this model where the variance $\sigma^2$, and correlation coefficient $\rho$, were set to $1.0$ and $0.9$, respectively. 

Following Example \ref{ex:Gaussian:graphical:graph:models}, $\precmat$ is assigned a hyper Wishart prior, where for each clique $\jtnode$ the degrees of freedom is set to be equal to $\p=50$, and the scale matrix is set to be the identity matrix of dimension $|\jtnode|$. 

In this example, 10 PG trajectories of length $M=10000$ were sampled, of which the first 3000 samples were removed as burn-in period.
The heatmaps and MAP graph estimates most similar to the true graph are displayed in Figure~\ref{fig:AR1-5_est}.
The temporal interpretation of the structure of this graph is particularly suited for investigating the role of $\delta$ as a tuning parameter.
Results for $\delta=5$ and $\delta=50$ are diplayed in Figure~\ref{fig:AR1-5_est} and Figure~\ref{fig:AR1-5_est_traj} respectively.
By comparing the two heatmaps one can notice that the dependence structure can be better captured by selecting a value of $\delta$ which corresponds to the maximal bandwidth size of 5 for the true graph.
In addition, by letting $\subtreecont=0.8$ and $\subtreenonempty=0.5$ we are able to express a priority for connected graphs, we obtain a heatmap pattern which better mimics the true one.
This effect is also reflected by the log-likelihood trajectories in the bottom row of Figure~\ref{fig:AR1-5_est_traj}.
The graph size auto-correlation (after burn-in) shown in the middle row of Figure~\ref{fig:AR1-5_est_traj}, decays slightly faster by a smaller $\delta$ and the mobility of the size trajectory, top row of Figure~\ref{fig:AR1-5_est_traj} is improved in this example.

\subsection{Comparison to the Metropolis-Hastings algorithm}\label{sub:comparison_to_cite}

We have compared the PG-sampler with the Metropolis-Hastings (MH) algorithm proposed in~\cite{Green01032013} for the Gaussian example in Section~\ref{sec:time_dep}.
Following the suggestions from that paper we randomise the junction tree every $\lambda$ iteration, we executed their algorithm for both $\lambda=100$ and $\lambda=1000$. 
We can confirm the suggestion in that paper that a more frequent junction tree randomisation has an improving effect on recovering the underlying model.
Therefore, results only for $\lambda=100$ are demonstrated here.

The main advantage of the PG-sampler as compared to the MH-sampler is its mixing properties.
Figure~\ref{fig:green_stat} shows the 10 trajectories of the MH-sampler after 350000 iterations (in total 500000 graphs were sampled), out of which 4 trajectories seem to have reached stationarity; see the size- and log-likelihood trajectories displayed in green, orange, gray and brown.
In the middle panel of Figure~\ref{fig:AR1-5_est} and Figure~\ref{fig:green_stat}, it is seen that the estimated auto-correlation of the MH-sampler is substantially stronger than that for the PG-sampler for both choices of $\delta$, being on average about $20000$ for the MH-sampler as compared to about $500$ for the PG-samples with $\delta=5$. 
Figure~\ref{fig:green_hm} shows the heatmap and MAP estimate corresponding to the green trajectory in Figure~\ref{fig:green_stat}.

On the other hand, the MH-sampler is superior in speed, which is at cost of the slower mixing seemingly inherited by the local moves.
Using the Java implementation from~\citet{Green01032013}, the MH-sampler with randomising interval \(\lambda=100 \) were able to sample about $20000$ samples per second, while each sample took about $3$ seconds for the PG-sampler.
Note that the implementation by~\cite{Green01032013} considers the intra-class model introduced Section~\ref{sec:time_dep} so that $\precmat$ is defined by merely two parameters $\sigma^2$ and $\rho$, and independent two priors are assigned for these instead of the hyper Wishart distribution as presented in Example~\ref{ex:Gaussian:graphical:graph:models}.
However, a gain in sample time for the MH-sampler is expected since, in each PG iteration, the conditional SMC procedure generates $p(N-1)$ junction trees with $\mathcal O(p^2)$ internal nodes per PG sample.
Also, when a new junction tree is proposed in the SMC algorithm, the previous junction tree, which it stems from is copied since it could potentially be an ancestor for other trees as well due to the re-sampling step.
As scope for future research, investigating new data structures for junction trees which are tailored to sequential sampling is of great interest.
We also expect that the speed of the PG-sampler could be improved by, for example parallelizing the SMC-updates and by improved caching strategies.
\begin{figure}[htbp]
    \centering
    \begin{minipage}[b]{0.4\textwidth}
        \includegraphics[width=\textwidth]{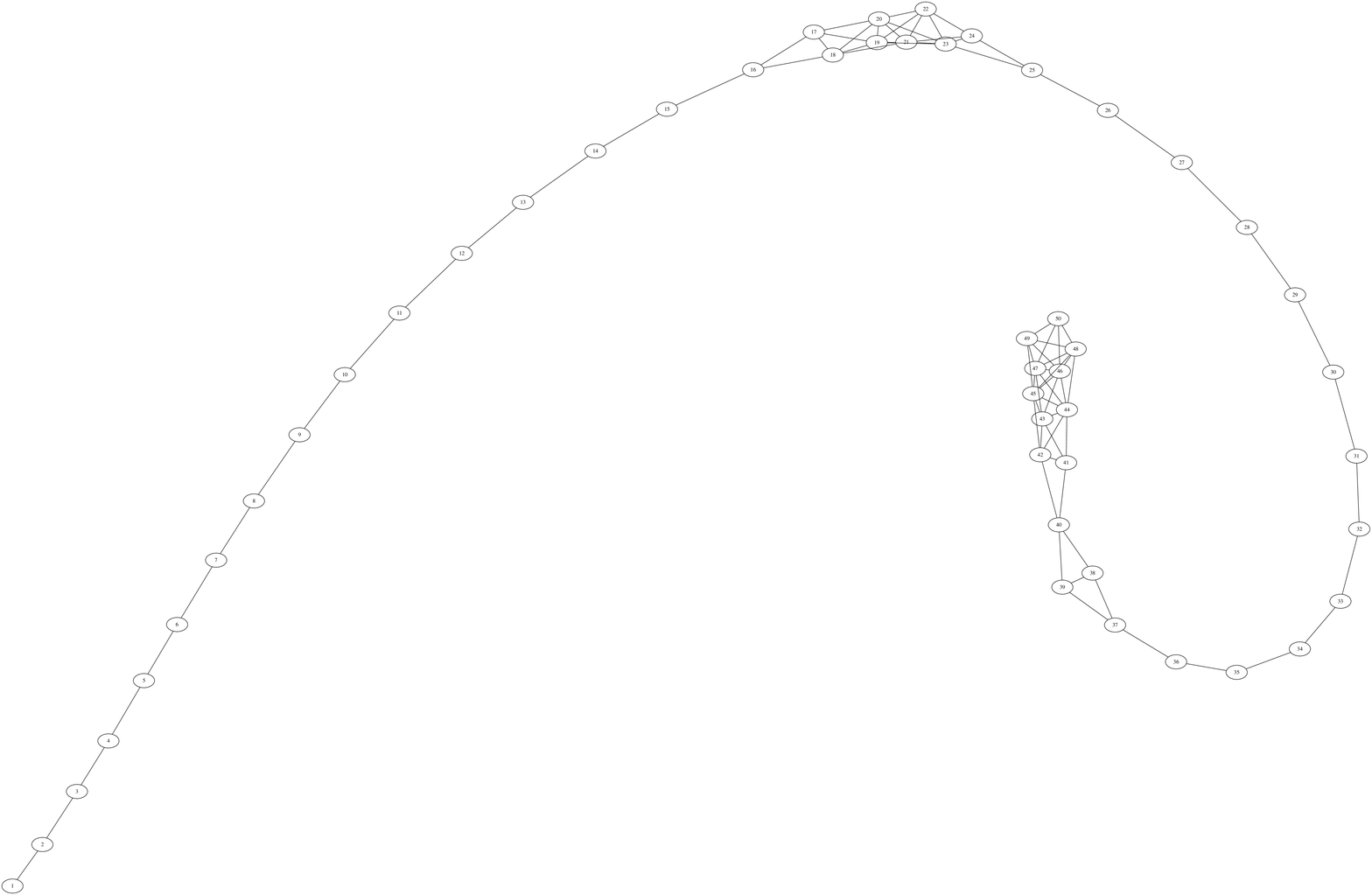}
    \end{minipage}
    ~
    \begin{minipage}[b]{0.4\textwidth}
        \includegraphics[width=\textwidth]{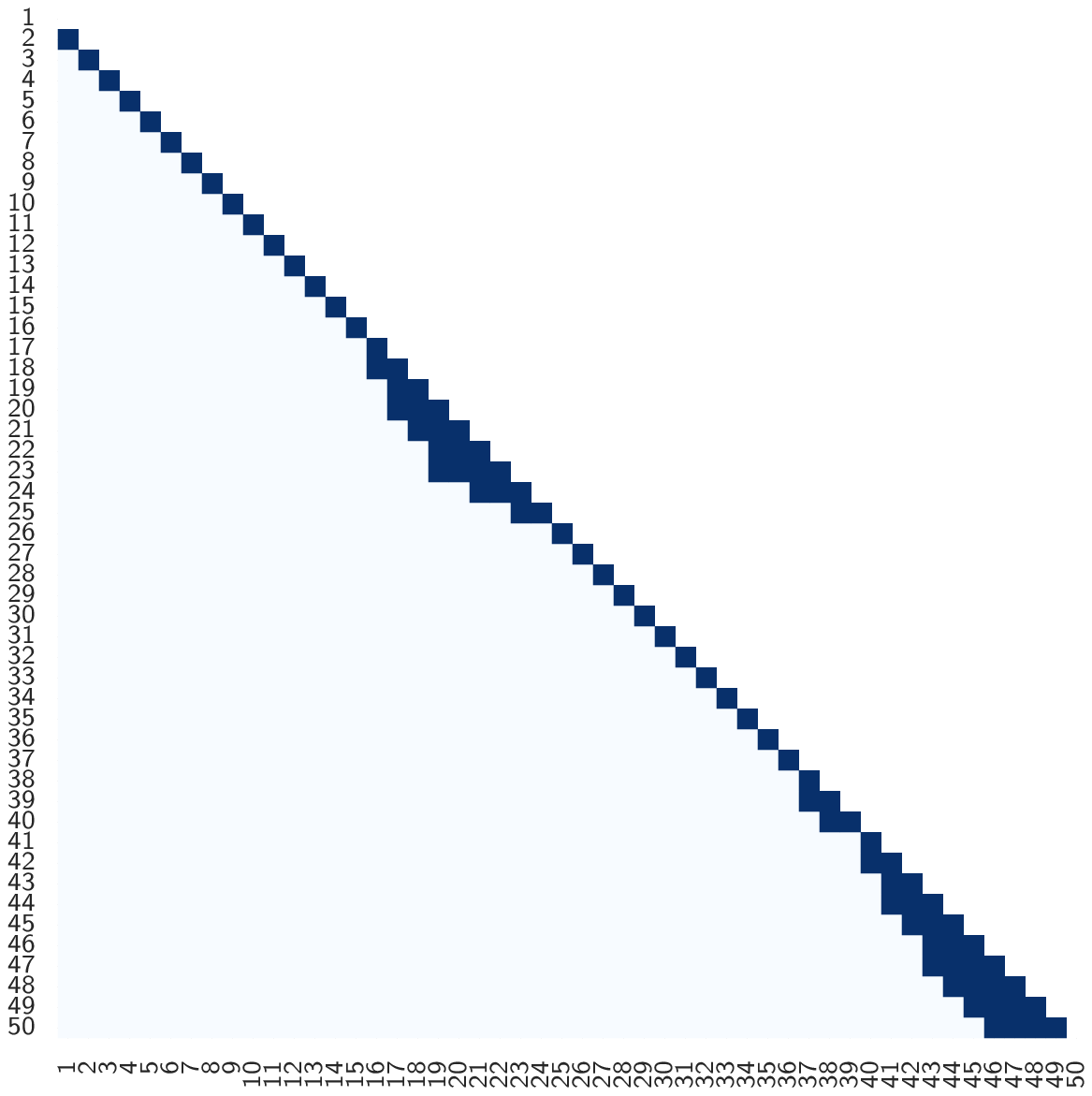}
    \end{minipage}
    \caption{The true underlying decomposable graph on $\p = 50$ nodes along with its adjacency matrix.}
    \label{fig:AR1-5}
\end{figure}
\begin{figure}[htbp]
    \centering

    \begin{minipage}[b]{0.38\textwidth}
        \includegraphics[width=\textwidth,trim={3cm 0 3cm 0},clip]{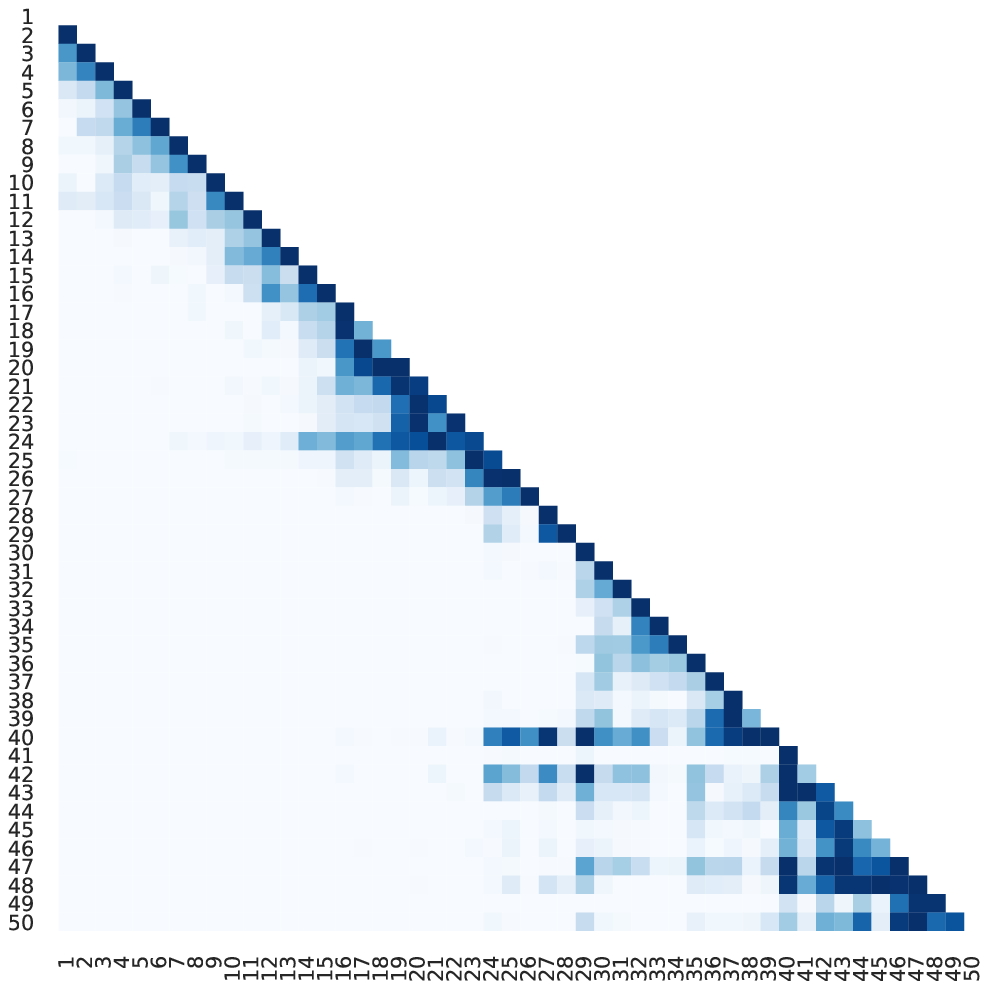}
    \end{minipage}
    ~
    \begin{minipage}[b]{0.45\textwidth}
        \includegraphics[width=\textwidth,trim={2cm 0 2cm 0},clip]{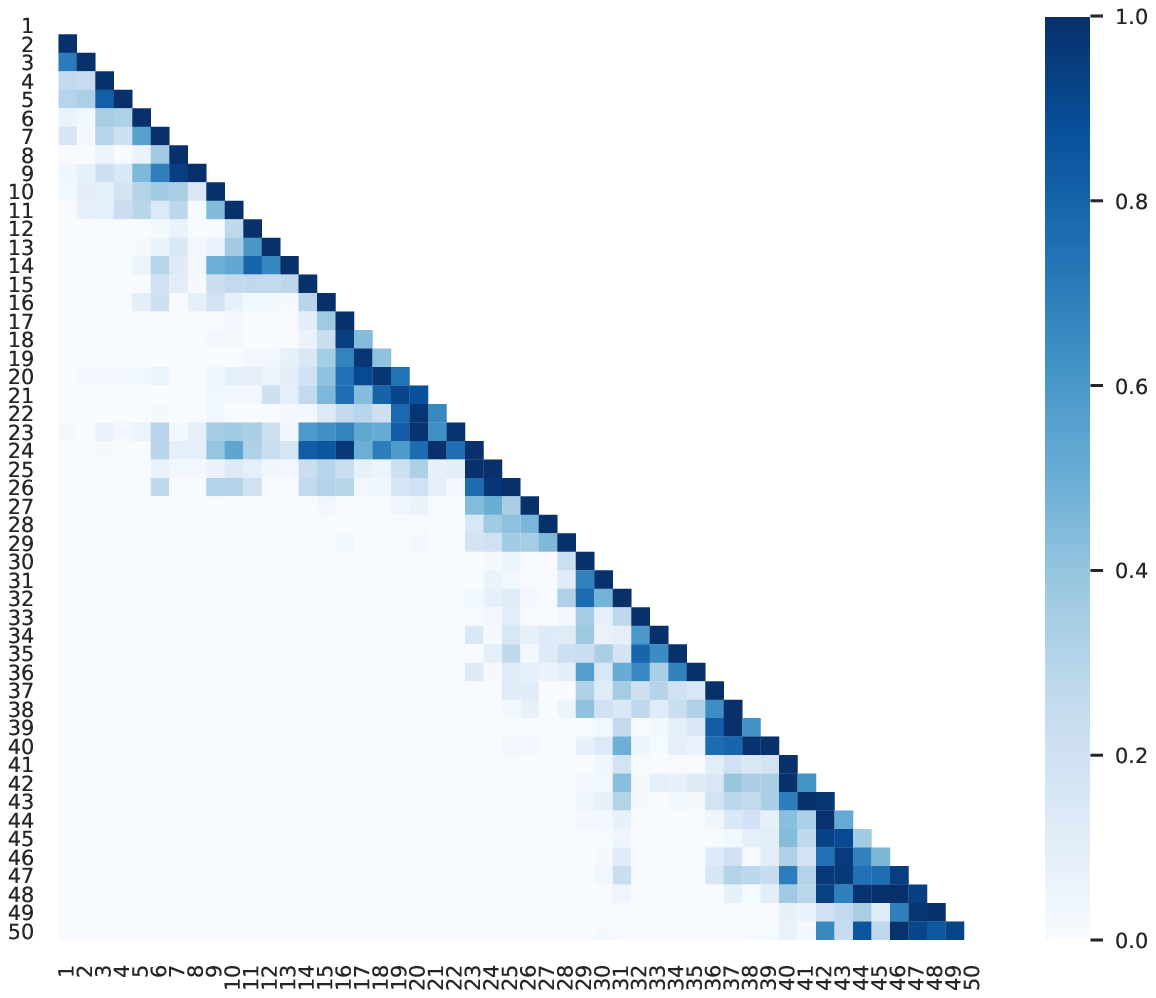}
    \end{minipage}

    \begin{minipage}[t]{0.38\textwidth}
        \includegraphics[width=\textwidth,trim={3cm 0 3cm, 0},clip]{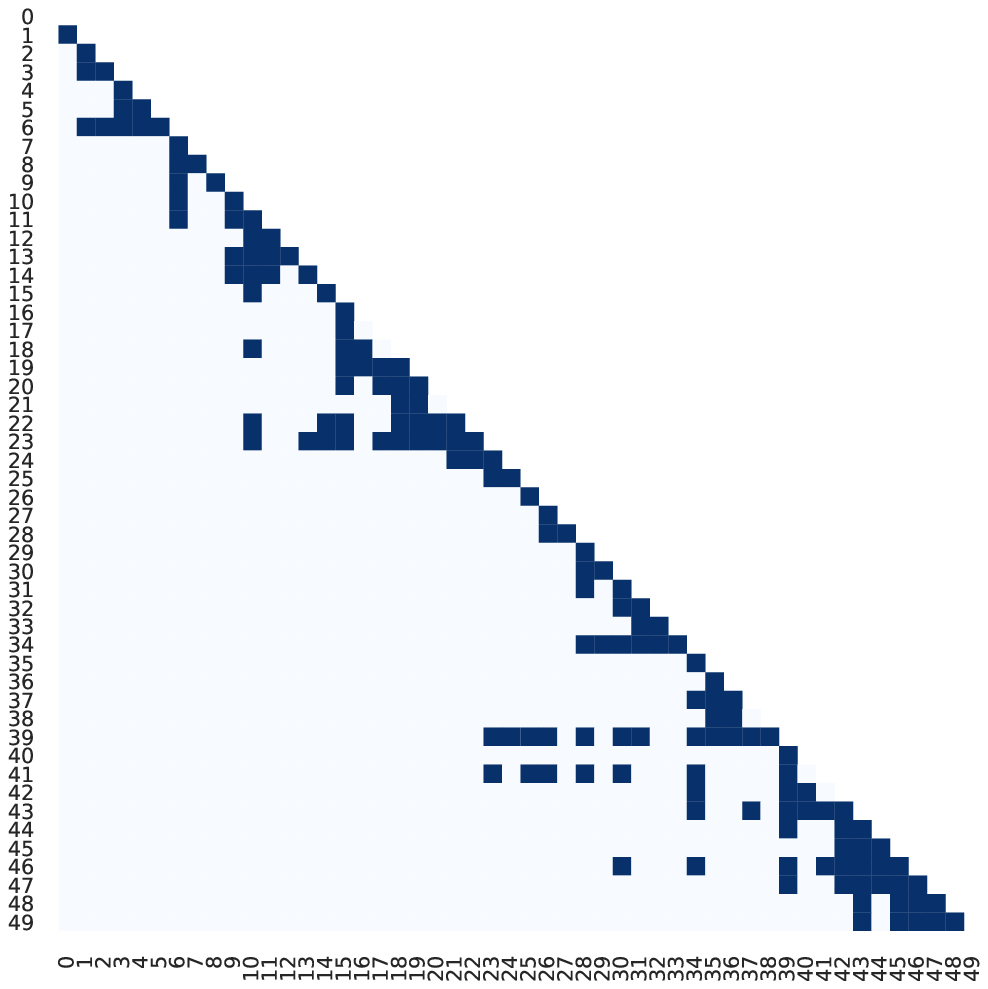}
    \end{minipage}
    ~
    \begin{minipage}[t]{0.38\textwidth}
        \includegraphics[width=\textwidth,trim={3cm 0 3cm 0},clip]{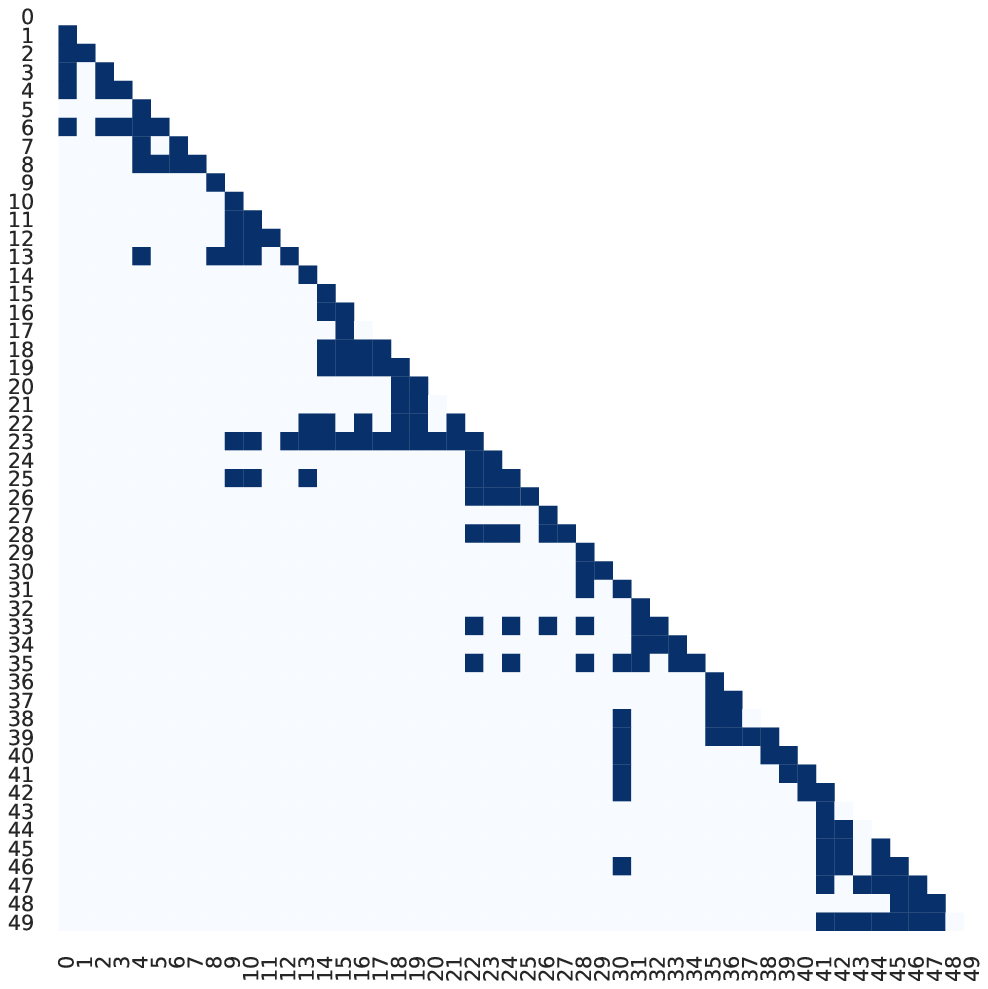}
    \end{minipage}
    \caption{Heatmaps (top row) and MAP graph estimates (bottom row) for the PG-sampler with $\delta=5$ (left panel) and $\delta=50$ (right panel).}
    \label{fig:AR1-5_est}
\end{figure}

\begin{figure}[!htbp]
    \centering

    \begin{minipage}[t]{0.45\textwidth}
        \includegraphics[width=\textwidth]{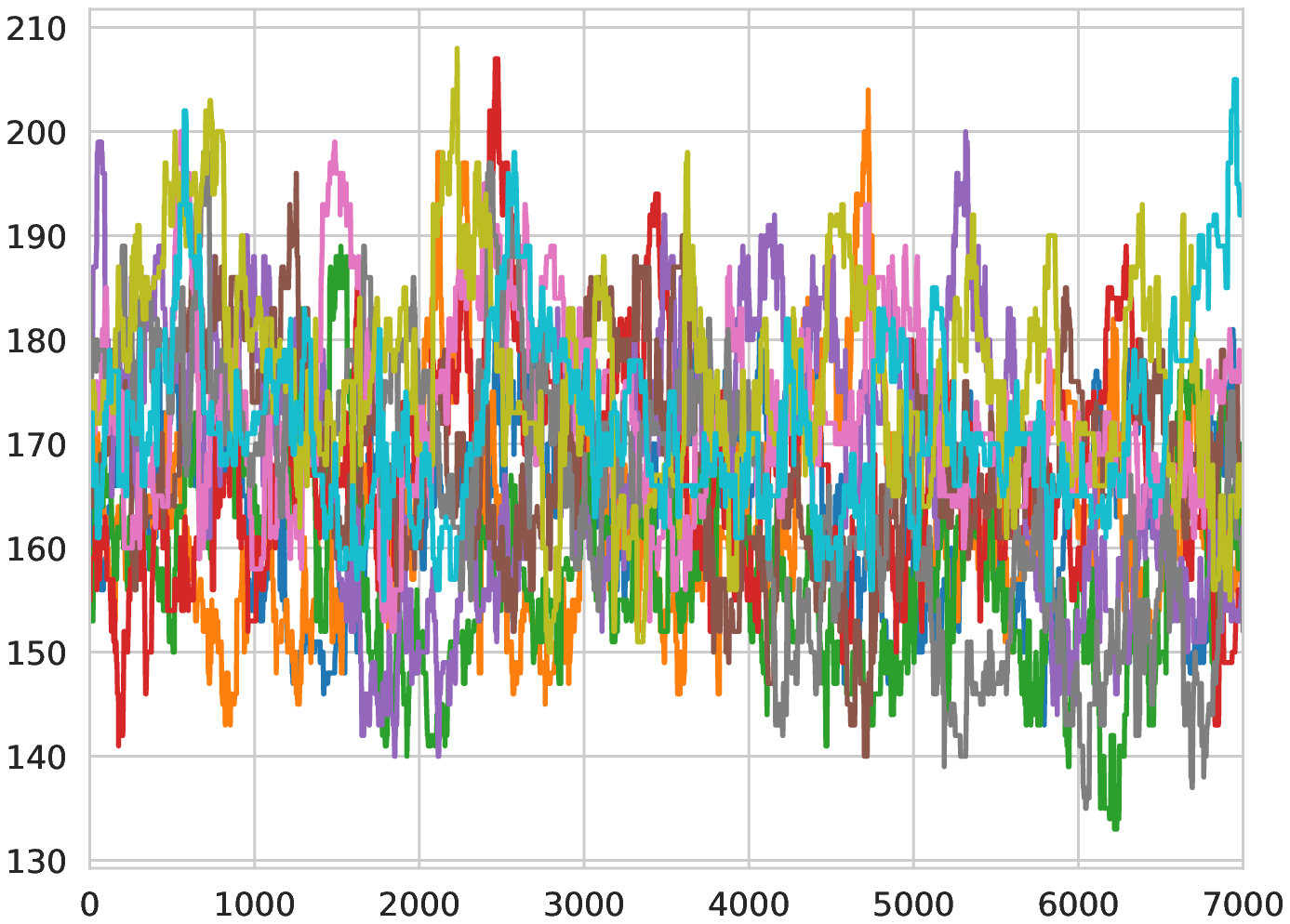}
    \end{minipage}
    ~
    \begin{minipage}[t]{0.45\textwidth}
        \includegraphics[width=\textwidth]{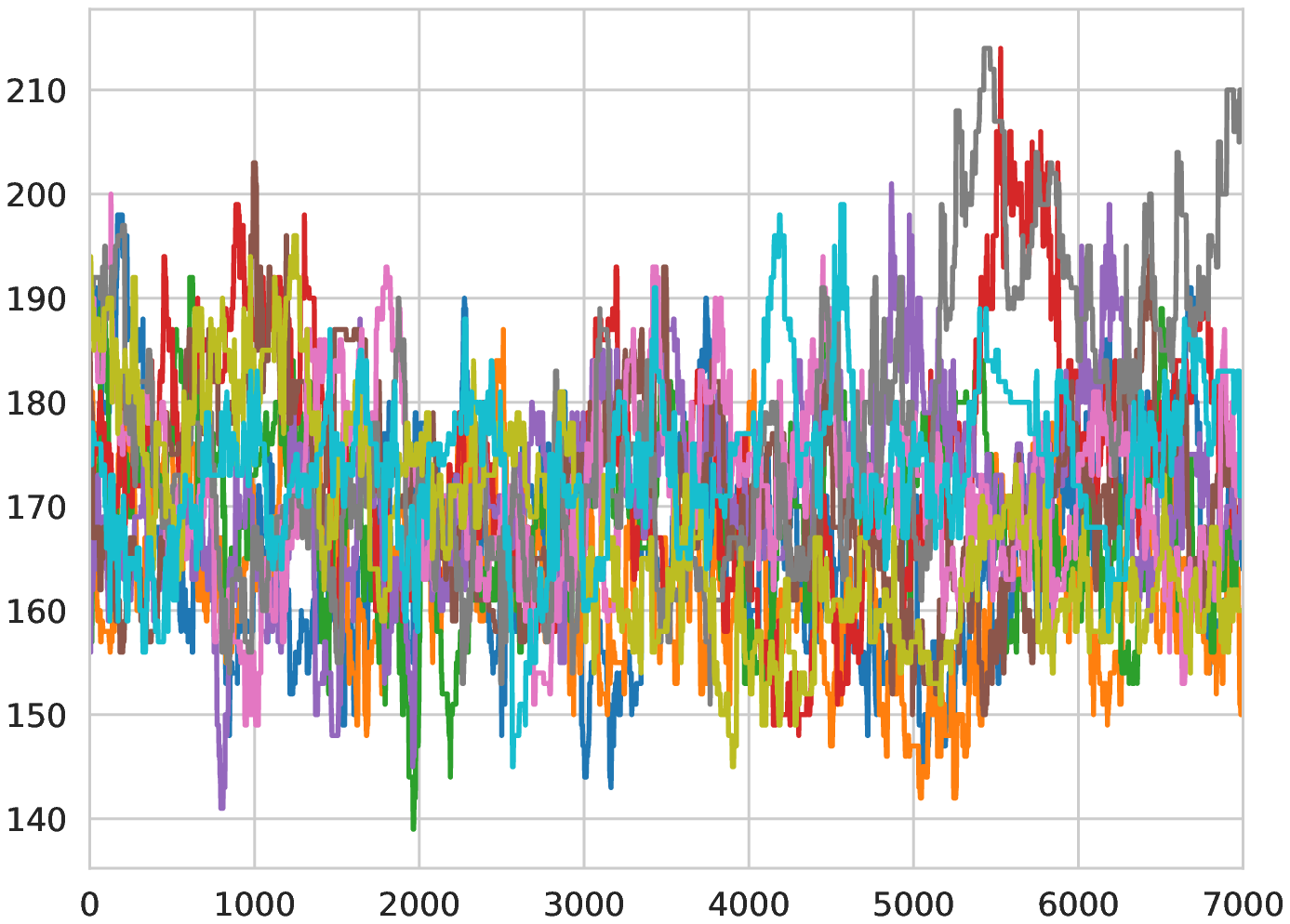}
    \end{minipage}

    \begin{minipage}[t]{0.45\textwidth}
        \includegraphics[width=\textwidth]{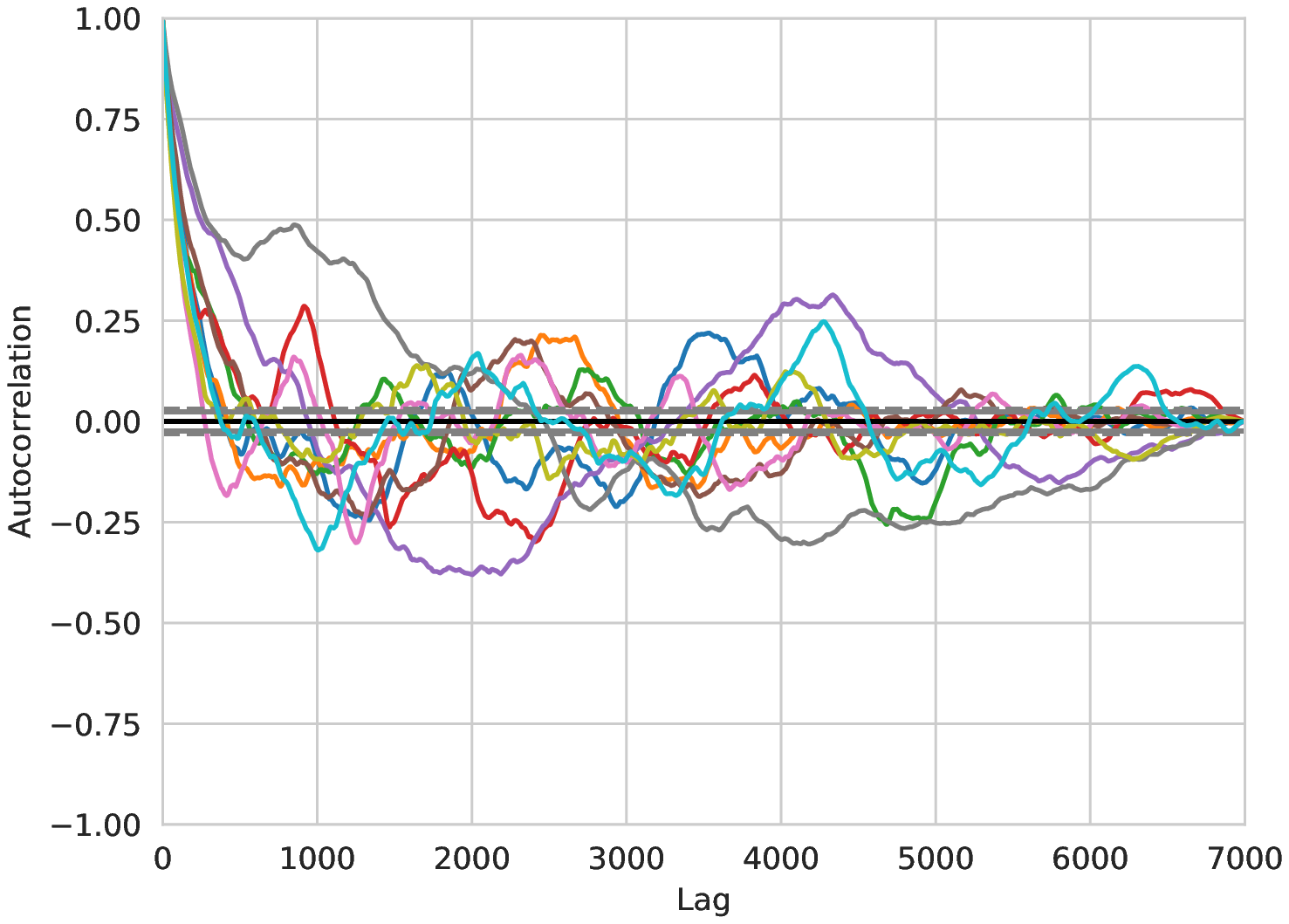}
    \end{minipage}
    ~
    \begin{minipage}[t]{0.45\textwidth}
        \includegraphics[width=\textwidth]{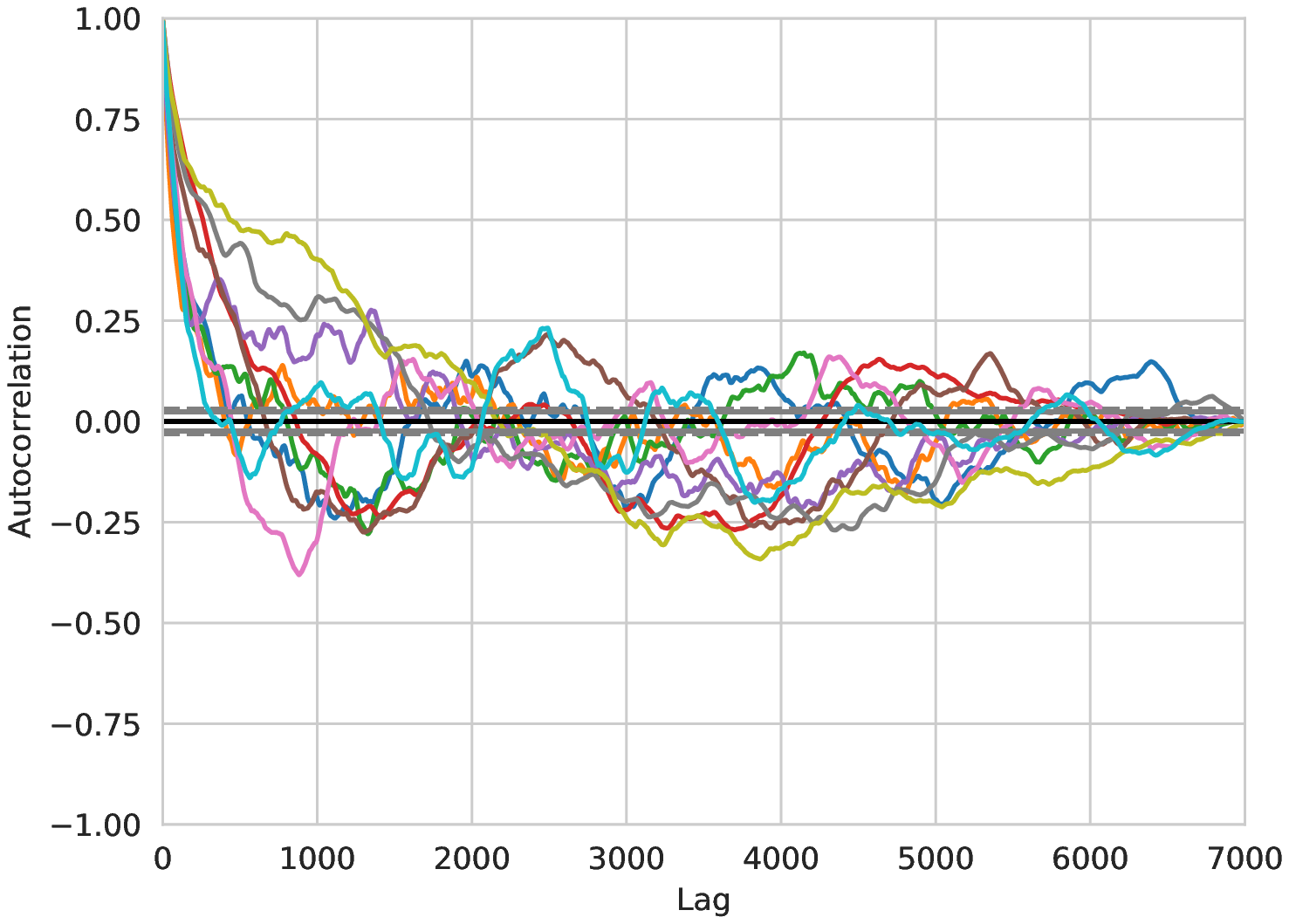}
    \end{minipage}

    \begin{minipage}[t]{0.45\textwidth}
        \includegraphics[width=\textwidth]{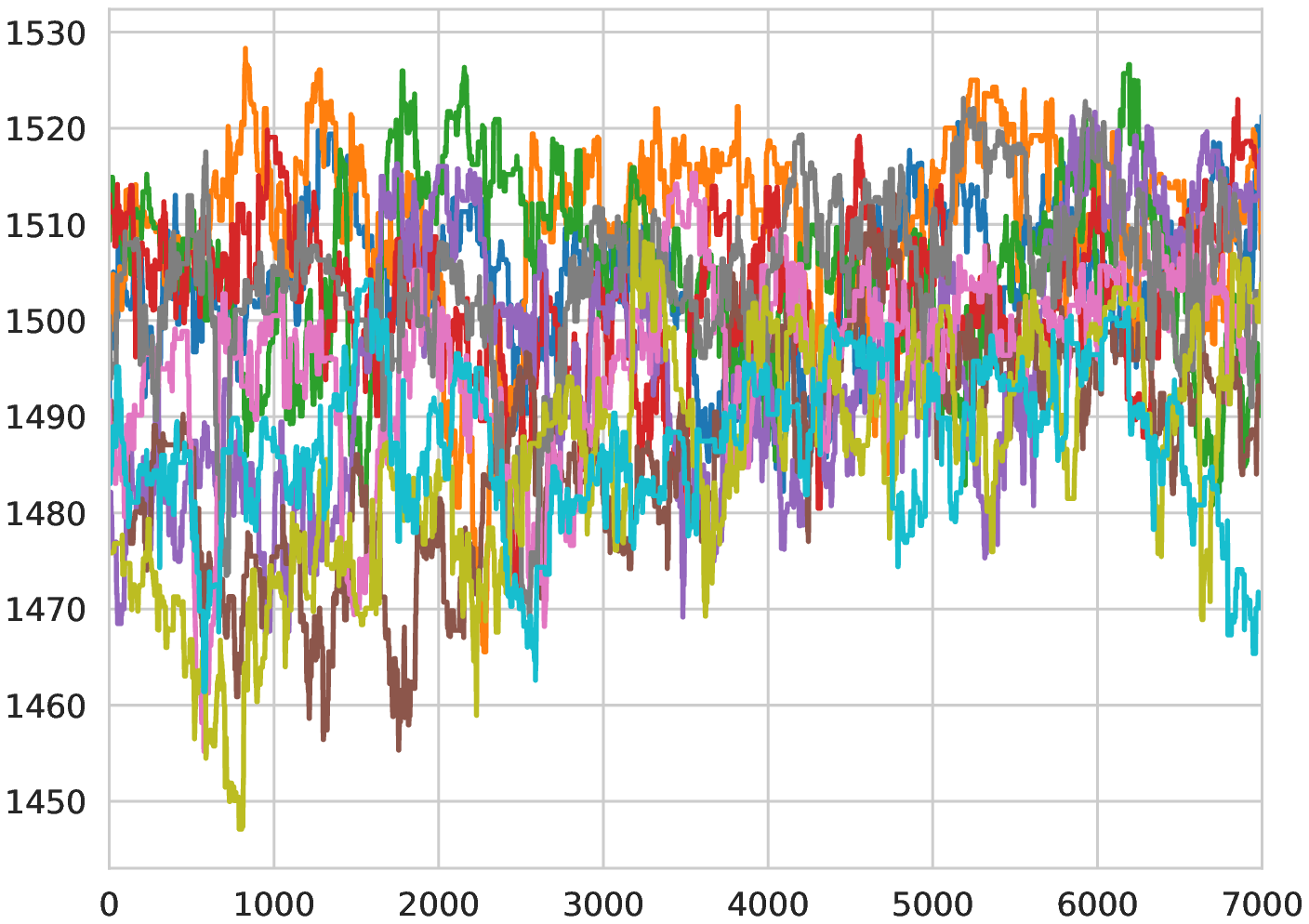}
    \end{minipage}
    ~
    \begin{minipage}[t]{0.45\textwidth}
        \includegraphics[width=\textwidth]{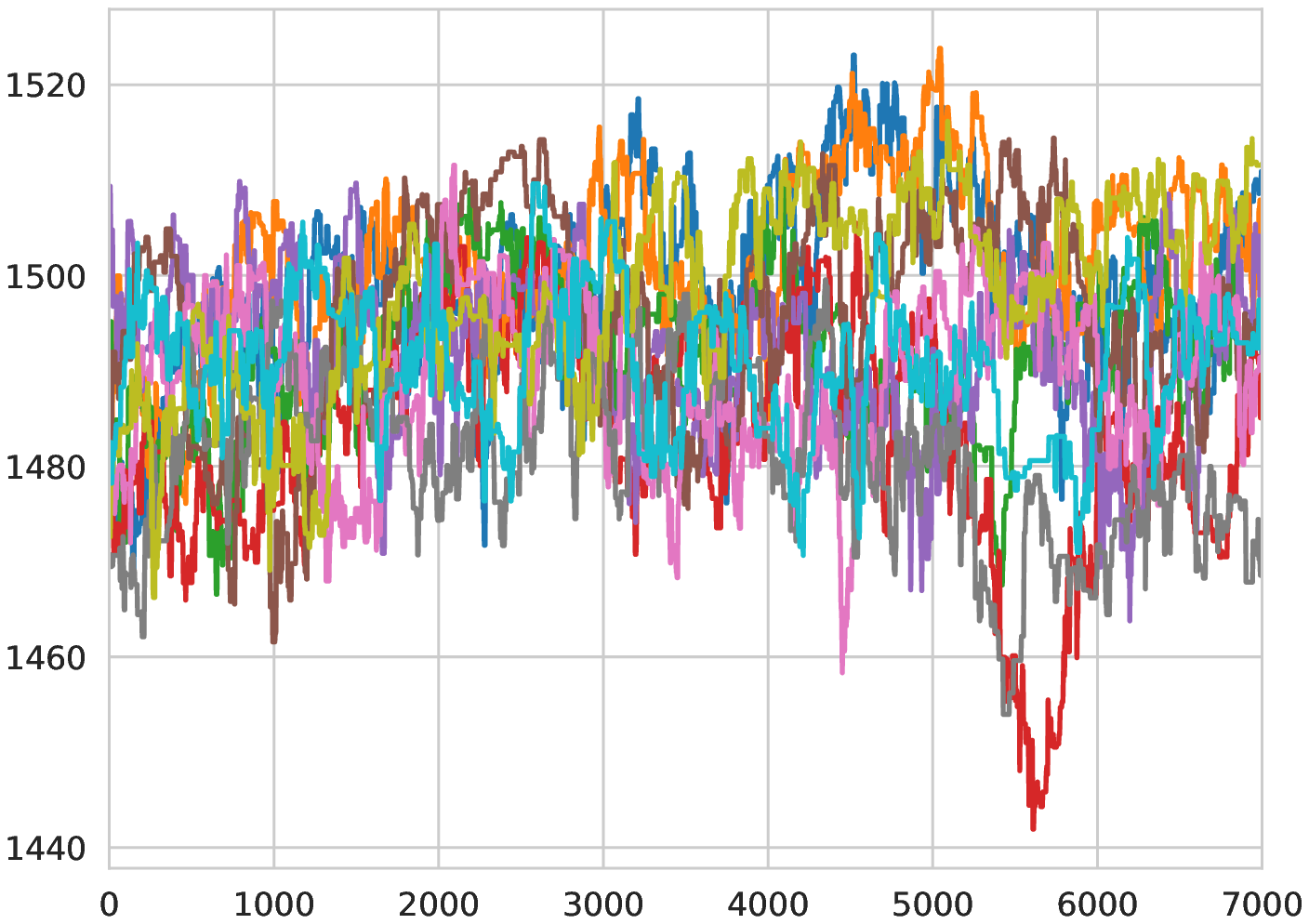}
    \end{minipage}
\caption{Ten size trajectories (top row), estimated size auto-correlations (middle row) and the graph log-likelihoods (bottom row) for the PG-sampler with $\delta=5$ (left panel) and $\delta=50$ (right panel).
}
    \label{fig:AR1-5_est_traj}
\end{figure}

\begin{figure}[ht!]
    \centering

    \begin{minipage}[l]{0.45\textwidth}
        \includegraphics[width=\textwidth,trim={3cm 0 0 0},clip]{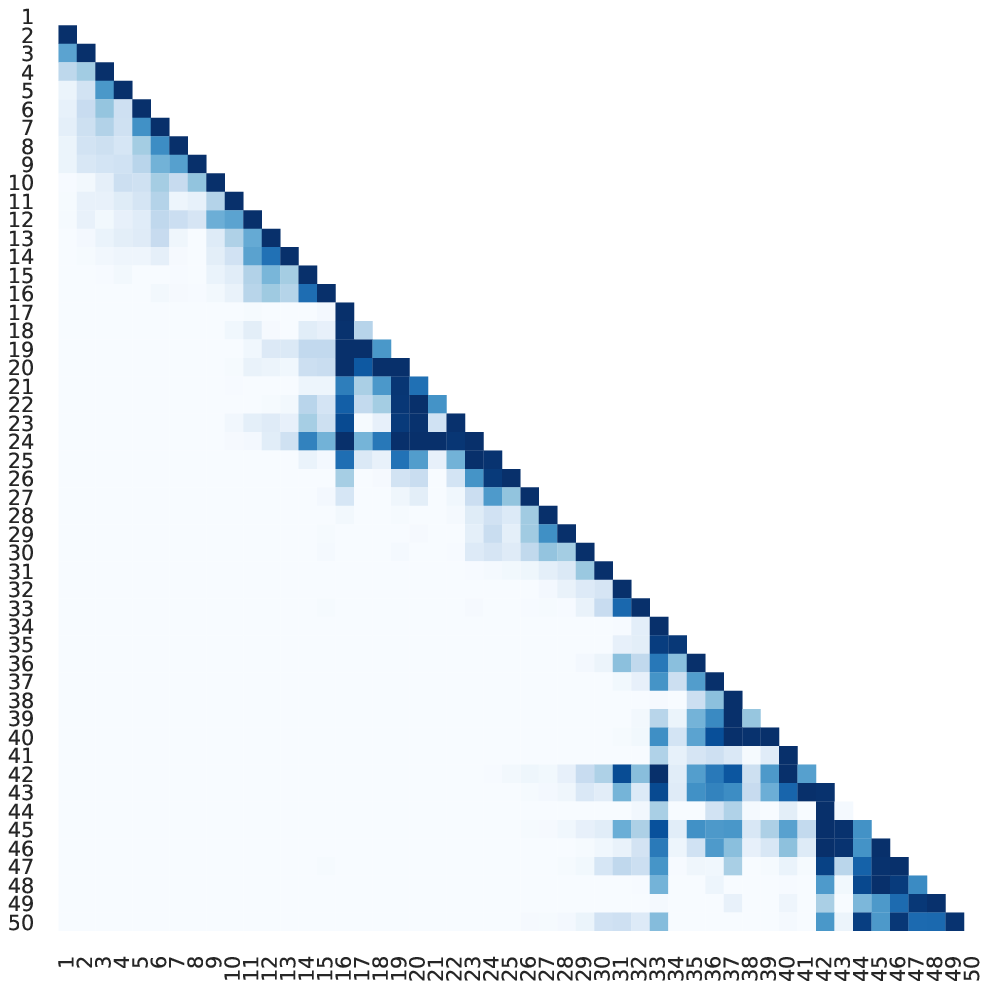}
    \end{minipage}
    ~
    \begin{minipage}[l]{0.45\textwidth}
        \includegraphics[width=\textwidth,trim={3cm 0 0 0},clip]{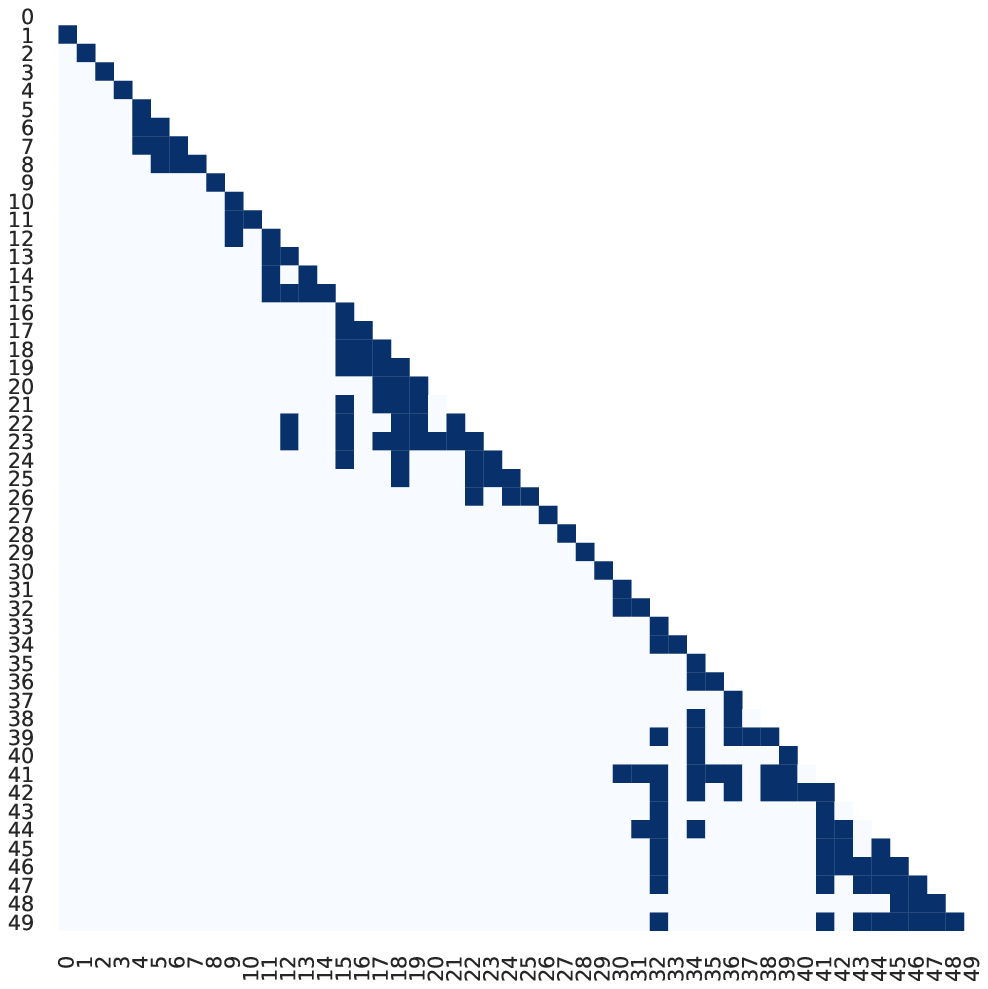}
    \end{minipage}
    \caption{Heatmaps (left panel) and MAP graph estimates (right panel) for the MH-sampler with junction tree randomization at every $\lambda=100$ iteration.
    }
    \label{fig:green_hm}
\end{figure}

\begin{figure}[ht!]
    \centering
      \begin{minipage}[t]{0.45\textwidth}
       \includegraphics[width=\textwidth]{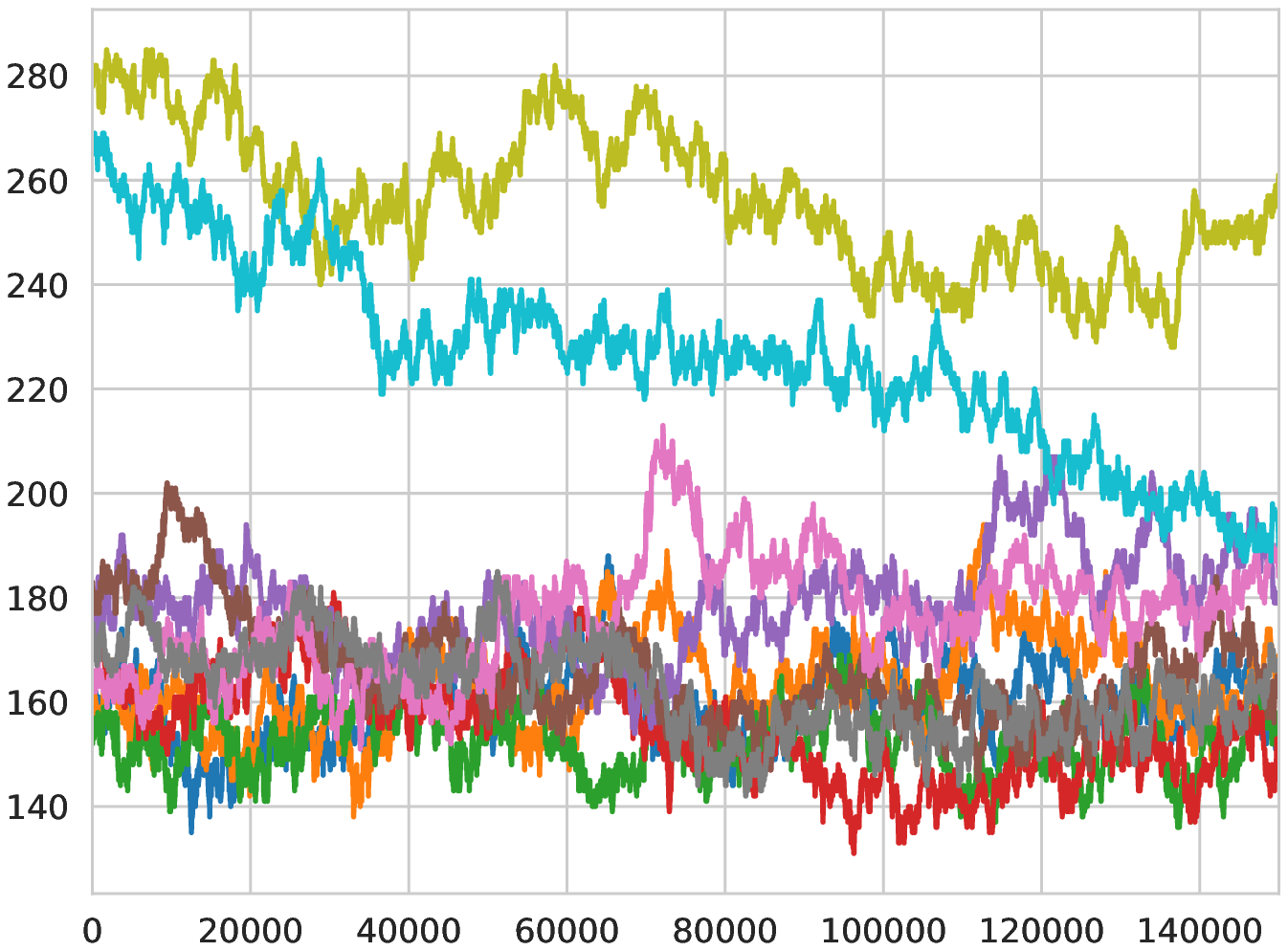}
   \end{minipage}

   \begin{minipage}[t]{0.45\textwidth}
       \includegraphics[width=\textwidth]{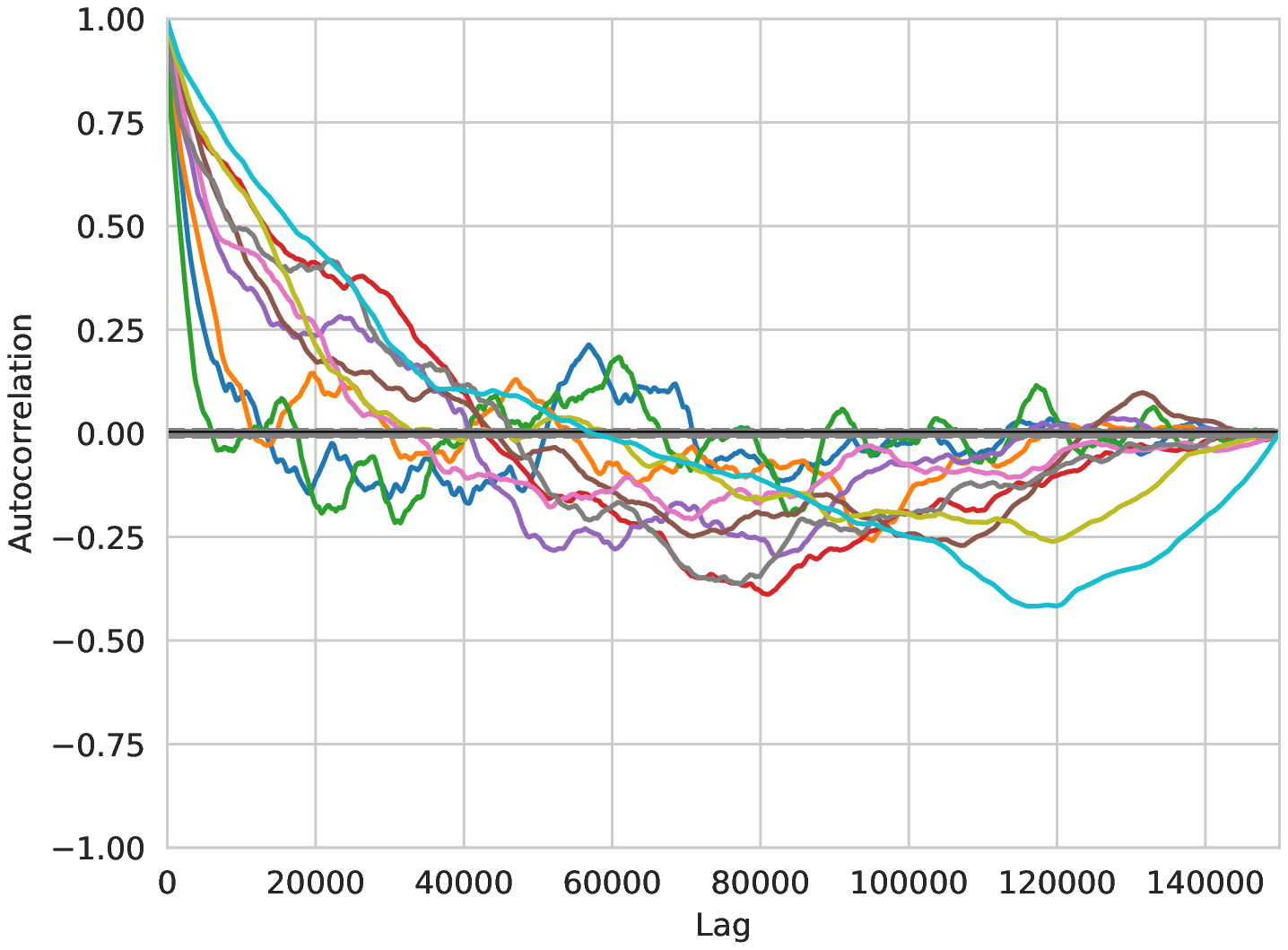}
   \end{minipage}

    \begin{minipage}[b]{0.45\textwidth}
        \includegraphics[width=\textwidth]{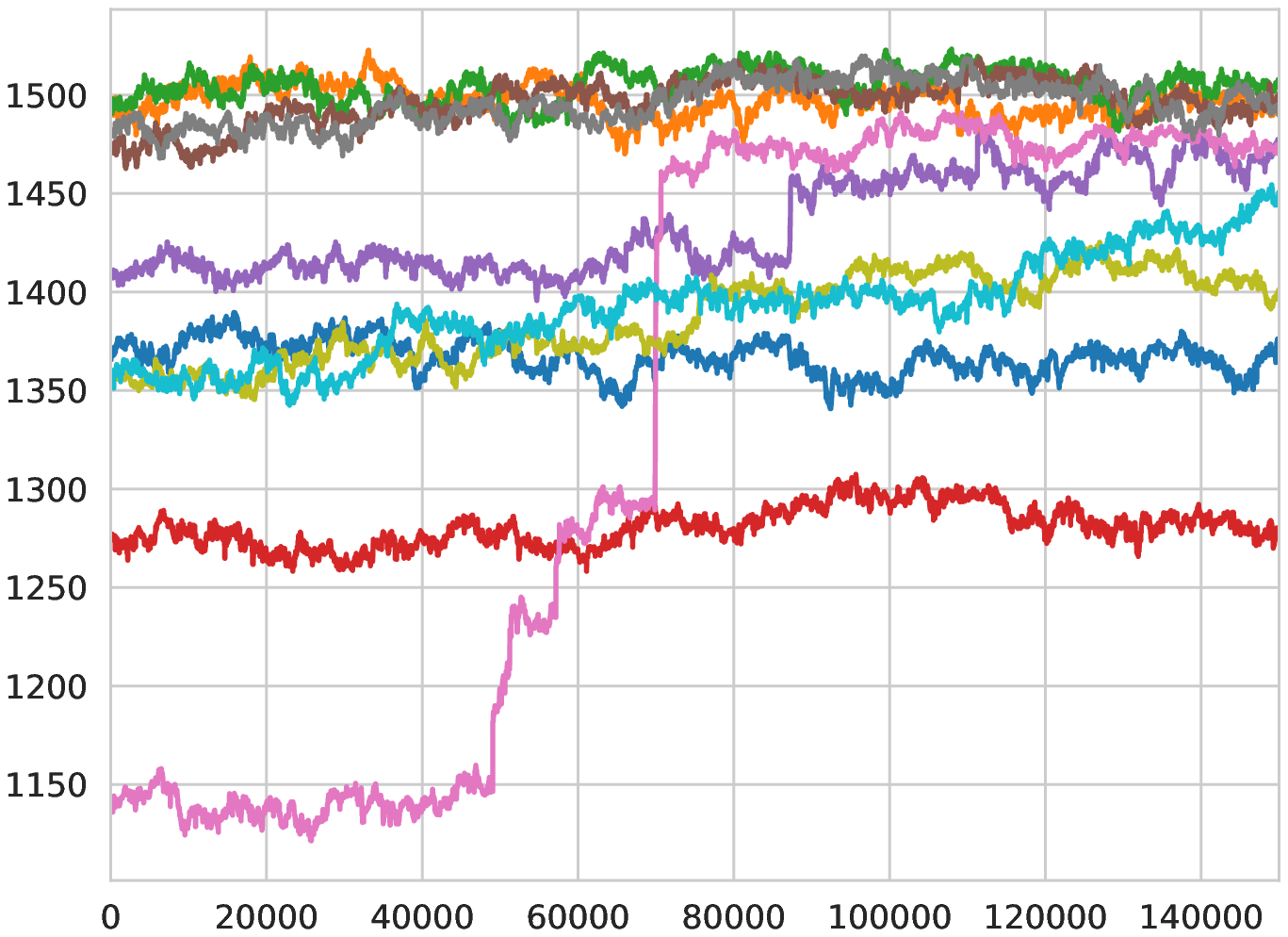}
    \end{minipage}
    \caption{Ten size trajectories (top panel), estimated size auto-correlations (middle panel) and the graph log-likelihoods (bottom panel) for the MH-sampler with junction tree randomization at each $\lambda=100$ iteration.}
    \label{fig:green_stat}
\end{figure}

\section*{Acknowledgements}\label{sec:acknowledgements}
We are grateful to the editor and two anonymous referees for the very helpful and constructive comments, which led to substantial improvement of the paper.
We would also like to thank the PDC Center for High Performance Computing at the KTH Royal Institute of Technology, for providing computing resources for our numerical study.
\clearpage

\clearpage
\appendix
\section{}\label{sec:appendix}

\subsection{Graph theory}\label{sec:graphtheory}
Given a set \(V = \{a_1, \ldots, a_p\}\) of $p \in \nsetpos$ distinct elements,
an \emph{undirected graph} $\graph$ with \emph{nodes} $\graphnodeset$
is specified by a set of \emph{edges} $\graphedgeset \subseteq \graphnodeset
\times \graphnodeset$, and we write $\graph = (\graphnodeset ,\graphedgeset)$.
In addition, we say that $\graph' = (\graphnodeset', \graphedgeset')$ is a
\emph{subgraph} of $\graph$ if $\graphnodeset' \subseteq \graphnodeset$
and $\graphedgeset' \subset \graphedgeset$. For any pair $(a, b)$ of nodes
in $\graph$, a \emph{path} from $a$ to $b$, denoted by $a \sim b$, is a
sequence $\{ a_{n_k} \}_{k = 1}^{\ell + 1}$, with $\ell \in \intvect{1}{p - 1}$, of
distinct nodes such that $a_{n_1} = a$, $a_{n_{\ell + 1}} = b$, and
$(a_{n_k}, a_{n_{k + 1}}) \in \graphedgeset$ for all $k \in \intvect{1}{\ell}$.
Here $\ell$ is called the \emph{length} of the path.  A graph is called a
\emph{tree} if there is a unique path between any pair of nodes.
A graph is \emph{connected} when there is a path between every pair
of nodes, and a \emph{subtree} is a connected subgraph of a tree.
Let $\graph =  (\graphnodeset ,\graphedgeset)$ be a graph and $A$, $B$,
and $S$ subsets of $\graphnodeset$; then $S$ is said to \emph{separate}
$A$ from $B$ if for all $a \in A$ and $b \in B$, every path $a \sim b$ intersects $S$.
This is denoted by $A \perp_{\graph} B \mid S$. A graph is \emph{complete}
if $\graphedgeset = \graphnodeset \times \graphnodeset$. Let $\graphnodeset'
\subseteq \graphnodeset$; then the \emph{induced subgraph}
$\graph {[\graphnodeset']} = (\graphnodeset', \graphedgeset')$ is the subgraph
of $\graph$ with nodes $\graphnodeset'$ and edges $\graphedgeset'$ given
by the subset of edges in $\graphedgeset$ having both endpoints in $\graphnodeset'$.
We write $\graph' = (\graphnodeset', \graphedgeset') \leq \graph = (\graphnodeset,
\graphedgeset)$ to indicate that $\graph' = \graph {[\graphnodeset']}$. A subset
$W \subseteq \graphnodeset$ is a \emph{complete set} if it induces a complete
subgraph. A complete subgraph is called a \emph{clique} if it is not an induced
subgraph of any other complete subgraph. We denote by $\clset{G}$ the family
of cliques formed by a graph $G$.\footnote{We use calligraphy
uppercase to denote families of graphs, or, more generally, families of sets
(as a graph is, given the nodes, specified through the edge set). Consequently,
calligraphy uppercase will also used to denote $\sigma$-fields.} A triple $(A, B, S)$
of disjoint subsets of $\graphnodeset$ is a \emph{decomposition} of
$\graph = (\graphnodeset, \graphedgeset)$ if $A \cup B \cup S=\graphnodeset$,
$A \neq \varnothing$, $B \neq \varnothing$, $S$ is complete, and it holds that
$A \perp_{\graph} B \mid S$.

\subsection{Proofs and lemmas}\label{sec:proofslemmas}
The following lemma, proved in a slightly different form in \cite{doi:10.1198/jcgs.2009.07129}, establishes that each extension
\eqref{def:extended:tree:meas} has the correct marginal
w.r.t. the graph, i.e., that $\graphtarg[\A]$ is the distribution
of $\trgr(\tau)$ when $\tau \sim \trtarg[\A]$.
\begin{lemma} \label{lemma:preservation:marginal}
	For all $\A \subseteq \graphnodeset$ and $h \in \mf{\graphfd}$,
	$$
		\E_{\trtarg[\A]} \left[ h \circ \trgr(\tau) \right] = \graphtarg[\A] h,
	$$
	where $\trtarg[\A]$ is defined in \eqref{def:extended:tree:meas}.
\end{lemma}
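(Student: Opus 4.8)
The plan is to prove the identity by direct computation, unfolding the definition~\eqref{def:extended:tree:meas} of $\trtarg[\A]$ and then partitioning the summation over junction trees according to their underlying graph. Since $\graphnodeset$ is finite, every space involved is finite and all sums below are finite, so no convergence issue arises and the rearrangements are legitimate. First I would write, for $h \in \mf{\graphfd}$,
\begin{equation*}
\E_{\trtarg[\A]}\left[ h \circ \trgr(\tau) \right]
= \sum_{\tree \in \trsp[\A]} h(\trgr(\tree))\, \trtarg[\A](\tree)
= \frac{1}{\untrtarg[\A] \1_{\trsp[\A]}}
\sum_{\tree \in \trsp[\A]} h(\trgr(\tree))\,
\frac{\ungraphtarg[\A] \circ \trgr(\tree)}{\mu \circ \trgr(\tree)} .
\end{equation*}

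The key step is then to observe that $\trsp[\A] = \bigcup_{\graph \in \graphsp[\A]} \grtr(\graph)$ is a \emph{disjoint} union, since each junction tree has a unique underlying graph $\trgr(\tree)$, and that for $\tree \in \grtr(\graph)$ the summand above depends on $\tree$ only through $\graph$. Grouping the sum accordingly and using $|\grtr(\graph)| = \mu(\graph)$, the inner sum over $\grtr(\graph)$ contributes exactly the factor $\mu(\graph)$, which cancels the denominator:
\begin{equation*}
\sum_{\tree \in \trsp[\A]} h(\trgr(\tree))\,
\frac{\ungraphtarg[\A] \circ \trgr(\tree)}{\mu \circ \trgr(\tree)}
= \sum_{\graph \in \graphsp[\A]}
\frac{h(\graph)\, \ungraphtarg[\A](\graph)}{\mu(\graph)}\, |\grtr(\graph)|
= \sum_{\graph \in \graphsp[\A]} h(\graph)\, \ungraphtarg[\A](\graph)
= \ungraphtarg[\A] h .
\end{equation*}

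Finally, specialising the previous display to $h \equiv \1$ identifies the normalising constant as $\untrtarg[\A] \1_{\trsp[\A]} = \ungraphtarg[\A] \1_{\graphsp[\A]}$, whence
\begin{equation*}
\E_{\trtarg[\A]}\left[ h \circ \trgr(\tau) \right]
= \frac{\ungraphtarg[\A] h}{\ungraphtarg[\A] \1_{\graphsp[\A]}}
= \graphtarg[\A] h,
\end{equation*}
which is the assertion. There is no genuine obstacle here; the only point requiring care is the bookkeeping underlying the cancellation — namely that $\{ \grtr(\graph) \}_{\graph \in \graphsp[\A]}$ really partitions $\trsp[\A]$ (equivalently, that distinct decomposable graphs have disjoint collections of junction-tree representations) and that $\mu(\graph) = |\grtr(\graph)|$ is finite and strictly positive for every decomposable $\graph$, so that no term is ill-defined. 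Both facts follow from the junction-tree characterisation of decomposability recalled in Section~\ref{sec:prel}.
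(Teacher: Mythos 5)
Your proof is correct and is essentially the paper's own argument: both decompose the sum over $\trsp[\A]$ into a double sum over graphs and their junction-tree representations, use that the summand depends on $\tree$ only through $\trgr(\tree)$ so the inner sum contributes the factor $\mu(\graph)$ which cancels, and identify the normalising constant $\untrtarg[\A] \1_{\trsp[\A]} = \ungraphtarg[\A] \1_{\graphsp[\A]}$. The only difference is ordering — the paper computes the normalising constant first and you recover it by specialising to $h \equiv \1$ — which is immaterial.
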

\begin{proof}
	\label{lemma:preservation:marginal:proof}
	Since
	\begin{multline*}
		\untrtarg[\A] \1_{\trsp[\A]}
		= \sum_{\tree \in \trsp[\A]} \untrtarg[\A](\tree)
		= \sum_{\graph \in \graphsp[\A]} \sum_{\tree \in \grtr(\graph)}
		\frac{\ungraphtarg[\A] \circ \trgr(\tree)}{\mu \circ \trgr(\tree)} \\
		= \sum_{\graph \in \graphsp[\A]} \mu(\graph)
		\frac{\ungraphtarg[\A](\graph)}{\mu(\graph)}
		= \ungraphtarg[\A] \1_{\graphsp[\A]},
	\end{multline*}
	it holds that
	\begin{equation} \label{eq:trtarg:alt:representation}
		\trtarg[\A](\rmd \tree) =
		\frac{\graphtarg[\A] \circ \trgr(\tree)}
		{\mu \circ \trgr(\tree)} \, \cm{\rmd \tree}.
	\end{equation}
	Now, let $h \in \mf{\graphfd[\A]}$; then by a similar computation,
	\begin{multline*}
		\E_{\trtarg[\A]} \left[ h \circ \trgr(\tree) \right]
		= \sum_{\graph \in \graphsp[\A]} \sum_{\tree \in \grtr(\graph)}
		h \circ \trgr(\tree) \frac{\graphtarg[\A] \circ \trgr(\tree)}
		{\mu \circ \trgr(\tree)} \\
		= \sum_{\graph \in \graphsp[\A]} \mu(\graph) h(\graph)
		\frac{\graphtarg[\A](\graph)}{\mu(\graph)} = \graphtarg[\A] h,
	\end{multline*}
	which completes the proof.
\end{proof}

\begin{proof}[Proof of Theorem \ref{thm:asymvar}]
\label{proof:asymvar}
	First, as established in \cite[Proposition~8]{chopin:singh:2015}, the standard
	PG kernel $\PG{p}$ is $\targ{1:p}$-reversible. As mentioned above, the kernel
	$\G{p}$ is straightforwardly $\targ{1:p}$-reversible as a standard Gibbs substep.
	Moreover, for all $\x{1:p} \in \xsp{1:p}$, $\G{p}(\x{1:p}, \xsp{1:p - 1} \times \{\x{p} \}) = 1$
	and $\G{p}$ dominates trivially the Dirac mass on the \emph{off-diagonal},
	in the sense that for all $A \in \xfd{1:p}$ and $\x{1:p} \in \xsp{1:p}$,
	$\G{p}(\x{1:p}, A \setminus \x{1:p}) \geq \delta_{\x{1:p}}(A \setminus \x{1:p}) = 0$.
	The assumptions of \cite[Lemma~18]{maire:douc:olsson:2014} are thus fulfilled,
	and the proof is concluded through application of the latter.
\end{proof}

\bibliographystyle{imsart-nameyear}
\bibliography{allbib}

\end{document}